\def\thm@space@setup{%
  \thm@preskip=\parskip \thm@postskip=0pt
}
\numberwithin{equation}{section}
\theoremstyle{plain}
\newtheorem{theorem}{Theorem}[section]
\newtheorem{lemma}[theorem]{Lemma}
\newtheorem{proposition}[theorem]{Proposition}
\newtheorem{corollary}[theorem]{Corollary}
\theoremstyle{definition}
\newtheorem{definition}{Definition}[section]
\newtheorem{conjecture}[definition]{Conjecture}
\newtheorem{remark}[definition]{Remark}
\renewenvironment{proof}{{\bf \noindent Proof.}}{\qed}
\newcommand{\CC}{\mathbb{C}}
\newcommand{\EE}{\mathbb{E}}
\newcommand{\FF}{\mathbb{F}}
\newcommand{\PP}{\mathbb{P}}
\newcommand{\RR}{\mathbb{R}}
\newcommand{\U}{\mathcal{U}}
\newcommand{\Ee}{\mathscr{E}}
\newcommand{\into}{\hookrightarrow}
\DeclareMathOperator*\esssup{\mathrm{ess \, sup}}
\DeclareMathOperator*\essinf{\mathrm{ess \, inf}}
\DeclareMathOperator*\supp{\mathrm{supp}}
\DeclareMathOperator*\hsupp{\mathrm{-supp}}
\def\1{\mathbf{1}}
\def\Id{\mathrm{Id}}
\def\H{{H}}
\def\M{\mathcal{M}}
\def\A{\mathcal{A}}
\def\N{\mathcal{N}}
\def\Zent{\mathcal{Z}}
\def\Ad{\mathrm{Ad}}
\def\Homeo{\mathrm{Homeo}}
\def\B{\mathcal{B}}
\def\Ball{\mathrm{Ball}}
\def\CB{\mathcal{C B}}
\def\L{\mathcal{L}}
\def\cb{\mathrm{cb}}
\def\Aut{\mathrm{Aut}}
\def\Tr{\mathrm{Tr}}
\def\Iso{\mathrm{Iso}}
\def\R{\mathcal{R}}
\def\mintensor{\otimes_{\min}}
\def\weaktensor{ \, \overline{\otimes} \, }
\def\osprojtensor{\, \widehat{\otimes} \,}
\def\BM{\mathfrak{B}}
\begin{document}

\title[Crossed-product extensions]{
  Crossed-products extensions\\
  of $L_p$-Bounds for amenable actions
}

\author[Gonz\'alez-P\'erez]{
  Adri\'an M. Gonz\'alez-P\'erez
}
\address{
  KU Leuven
  Celestijnenlaan 200B, 3001 Leuven
}
\email{
  adrian.gonzalezperez@kuleuven.be
}

\thanks{
  The author has been partially supported by the
  ICMAT-Severo Ochoa Excellence Programme
  \texttt{SEV-2015-0554} and
  and by the European Research Council
  consolidator grant \texttt{614195}.
}

\date{}

\begin{abstract}
  We will extend the transference results in \cite{NeuRic2011,CasSall2015}
  from the context of noncommutative
  $L_p$-spaces associated with amenable groups to that of noncommutative
  $L_p$-spaces over crossed products of amenable actions. Namely, if
  $T_m:L_p(\L G) \to L_p(\L G)$ is a completely bounded operator, where
  $\L G \subset \B(L_2 G)$ is the von Neumann algebra of $G$, then,
  we will see that
  $\Id \rtimes T_m: L_p(\M \rtimes_\theta G) \to L_p(\M \rtimes_\theta G)$
  is also completely bounded and that
  \[
    \| \Id \rtimes T_m: L_p(\M \rtimes_\theta G) \to L_p(\M \rtimes_\theta G) \|_\cb
    \leq \| T_m \|_\cb
  \]
  provided that $\theta$ is amenable and trace-preserving. Furthermore,
  our construction allow to extend $G$-equivariant completely bounded
  operators $S: L_p(\M) \to L_p(\M)$ to the crossed-product, so that
  \[
    \|S \rtimes \Id: L_p(\M \rtimes_\theta G) \to L_p(\M \rtimes_\theta G) \|_\cb
    \leq 
    C^\frac{1}{p} \, \| S \|_\cb
  \]
  whenever $\theta$ is trace-preserving, amenable and its generalized
  F{\o}lner sets satisfy certain accretivity property measured by the
  constant $1 \leq C$. As a corollary we obtain stability results for
  maximal $L_p$-bounds over crossed products. Such results imply the
  stability under crossed products of the \emph{standard assumptions}
  used in \cite{GonJunPar2015} to prove a noncommutative generalization
  of the spectral H{\"o}mander-Mikhlin theorem. 
\end{abstract}

\maketitle

\section*{\bf Introduction}
\label{CP}

The purpose of this article is to study transference results for operators
acting on the $L_p$-spaces of crossed products. In order to state and
prove our results we will need to recall briefly in this introduction
some definitions concerning noncommutative $L_p$-spaces, completely
bounded operators, crossed products and non-commutative maximal
inequalities. We will also provide suitable references for the material
here summarized and formulate the main results of the text.

\subsection*{\it Noncommutative $L_p$-spaces.}
Let $\M \subset \B(\H)$ be a von Neumann algebra. If
$\tau_\M: \M_+ \to [0,\infty]$ is a \emph{normal, semifinite and faithful}
trace, or a n.s.f. trace in short, then we have a very
well understood theory of noncommutative integration. In particular, we can
construct a family of Banach spaces, called the \emph{noncommutative
$L_p$-spaces}. Such spaces are given by completion with respect to the norm
$\| x \|_p = \tau_{\M}(|x|^p)^{1/p}$, see \cite{PiXu2003} for more information.
Naturally, this construction generalizes the classical $L_p$-spaces whenever
$\M$ is abelian and $\tau_\M$ is given by integration against a measure. We will
denote the $L_p$-spaces associated with a trace by $L_p(\M,\tau_\M)$, omitting
the dependency on the trace when it can be understood from the context. If
$\M = \B(\H)$ and $\tau_\M = \Tr$ is the canonical trace the resulting
spaces are called the Schatten classes and denoted by $S_p(\H)$ or $S_p$ if
the dependency on the Hilbert space can be understood from the context. As
it is customary, when $\H = \ell_2^n$ we will denote $S_p(\H)$ by $S_p^n$.

\subsection*{\it Completely bounded maps.}
Throughout this article we will use liberally the language of operator spaces.
Recall that the category of operator spaces can be defined as that of closed
linear subspaces $E \subset \B(\H)$ with morphisms given the, so called,
\emph{completely bounded} maps $\phi: E \to F$. I.e. linear maps such that
their matrix amplifications $\Id \otimes \phi: M_n[E] \to M_n[F]$ are
uniformly bounded in $1 \leq n$. The spaces $M_n[E], M_n[F] \subset 
\B(\ell_2^n \otimes_2 H)$ are just the spaces of $E$-valued,
resp. $F$-valued, matrices with the norm inherited from
$M_n[\B(\H)] = \B(\ell_2^n \otimes_2 H)$. We define the
completely bounded norm as
\[
  \| \phi \|_\cb =
  \| \phi: E \to F \|_\cb =
  \sup_{1 \leq n} \Big\{ \big\| \Id \otimes \phi: M_n[E] \to M_n[F] \big\| \Big\}.
\] 
The space of all completely bounded maps will be denoted by $\CB(E,F)$ and
the term completely bounded will often be shorten to \emph{c.b}. Similarly
a map $\phi$ is completely positive iff all of its matrix amplification
are positivity preserving maps. The category
of operator spaces is closed under quotients, subsets, interpolation and other
operations, see \cite{Pi2003, EffRu2000Book} for more information.
We shall also point out that $L_p(\M)$ can be endowed with a canonical operator
space structure compatible with interpolation. Such structure is given by
interpolation between $L_\infty(\M) = \M \subset \B(\H)$ and $L_1(\M)$, which
will be identified with the predual of the \emph{opposite algebra}, i.e.
the algebra $\M$ with multiplication reversed, see
\cite[Chapter 7, p. 138-141]{Pi2003} for the details.

\subsection*{\it The canonical trace.}
Let $G$ be a locally compact and
Hausdorff group and let $\L G \subset \B(L_2 G)$ be the
von Neumann algebra generated by the left regular representation $\lambda$.
Such von Neumann algebra is commonly understood as a noncommutative
generalization of the $L_\infty$-space over the abelian Pontryagin dual $\hat{G}$.
The algebra $\L G$ carries a natural normal, semifinite and faithful
(\emph{n.s.f.} in short) weight $\tau_G: \L G \to [0,\infty]$, given
by extension of
\[
  \tau_G(\lambda(f)) = \tau_G \Big( \int_G f(g) \, \lambda_g \, d\mu(g) \Big) = f(e),
\]
where $f \in C_c(G) \ast C_c(G)$, see \cite[Chapter 7]{Ped1979} for the details
of the construction of such functional. Such weight coincides with integration
against the Haar measure over the dual group if $G$ is abelian. Observe also that
$\tau_G$ is a tracial weight whenever $G$ is unimodular. Here, we will work in
the context of unimodular groups and the $L_p$-spaces associated with the
canonical trace will be denoted by $L_p(\L G)$. Such weight is sometimes
called the Plancherel weight since the map $f \mapsto \lambda(f)$ becomes a
unitary isometry $\lambda: L_2(G) \to L_2(\L G)$.

\subsection*{\it Crossed Products.}
Let $\M \subset \B(\H)$ be a von Neumann algebra,
$\tau_{\M}: \M_+ \to [0,\infty]$ a n.s.f.
trace and $\theta:G \to \Aut(\M)$ a normal and trace preserving action of
$G$. We define the (spatial) \emph{crossed product} 
$\M \rtimes_\theta G \subset \B(\H \otimes_2 L_2 G)$ as the von Neumann
algebra generated by the images of $g \mapsto \1 \otimes \lambda_g$ and 
$\iota: \M  \into \B(\H \otimes_2 L_2 G)$ given by
\[
  (\iota x)(\xi)(g) = \theta_{g^{-1}}(x) \xi(g),
\]
where $\xi \in L_2(G;\H) \cong \H \otimes_2 L_2(G)$. Observe that
$\M \rtimes_\theta G$ is spanned, after taking weak-$\ast$ completions,
by binomials of the form $\iota x \cdot \lambda_g$. We will usually
denote such binomials by $x \rtimes \lambda_g$.
By \cite{Haa1979, Haa1979ii} we know that there is a faithful and equivariant
operator valued weight $\EE_{\M}: (\M \rtimes_\theta G)_+ \to \M_+^\wedge$
generalizing the Plancherel weight when $\M = \CC$. After composing with
$\tau_\M$ we obtain a n.s.f. weight $\tau = \tau_\M \circ \EE_\M$
that generalizes both $\tau_G$ over
$\1 \otimes \L G \subset \M \rtimes_\theta G$ and $\tau_\M$ over
$\M \rtimes \1 \subset \M \rtimes_\theta G$. It is easy to see that, since
$\theta$ is trace-preserving, $\tau$ is a tracial weight whenever $G$ is
unimodular.

\subsection*{\it Multipliers.}
Our aim in this paper is to transfer the complete boundedness of certain
operators acting on $L_p(\M)$ and $L_p(\L G)$ to $L_p(\M \rtimes_\theta G)$.
Let $m \in L_\infty(G)$. We will denote by $T_m: L_2(\L G) \to L_2(\L G)$
the so-called \emph{Fourier multiplier} of \emph{symbol} $m$,
i.e. the operator given by linear extension of
\[
  T_m \Big( \int_G f(g) \lambda_g \, d\mu(g) \Big)
  = \int_G m(g) f(g) \lambda_g \, d\mu(g).
\]
It is a trivial consequence of the
Plancherel theorem that such operator is bounded in $L_2(\L G)$. The
boundedness in $L_p(\L G)$ is a subtle question that has received much attention
in the past, both in the commutative setting and in the noncommutative one,
see for instance \cite{Pi1995, Har1999Fourier, JunMeiPar2014,
JunMeiPar2014NoncommRiesz} and references within. Fourier multipliers have a
close relative in the so called \emph{Schur multiplers}. Let
$a = [a_{i, j}]_{i,j}, b = [b_{i, j}]_{i, j} \subset \B(\H)$ be matrices.
Their \emph{Schur product} is defined as the cellwise product
$a \bullet b = [a_{i, j} \, b_{i, j}]_{i, j}$.
Any operator given by multiplication with respect
to a fixed $a$ is called a Schur multiplier.
Observe that, since the matrix units $[e_{i,j}]_{i,j}$ form an orthogonal
basis for $S_2(\H)$, if $a \in \ell_\infty \otimes \ell_\infty$, then, the
operator $b \mapsto a \bullet b$ is bounded. Again, determining when a
Schur multiplier is bounded in $S_p(\H)$, for $p \neq 2$, is a
difficult problem. When $\H = \ell_2(\Gamma)$,
for a discrete group $\Gamma$, we can define the \emph{Herz-Schur multipliers}
associated with a \emph{symbol} $m \in \ell_\infty(\Gamma)$ by
\[
  M_m(e_{g,h}) = m(g \, h^{-1}) \, e_{g,h}.
\]
Herz-Schur and Fourier multipliers are
extremely close object since the former restrict to the later
when we restrict the range from $\B(\ell_2 \Gamma)$ to the subalgebra
$\L \Gamma$, see the beginning of Section \ref{CP.S2} for more information.
Recall also that Herz-Schur multipliers can be defined for general locally
compact groups just by extension of the pointwise multiplication by the
integral kernels $k \in C_c(G \times G)$ associated to bounded operators,
the more or less straightforward details can be consulted at the beginning
of \cite{CasSall2015} or \cite{LaffSall2011}.

\subsection*{\it Maximal Inequalities.}
When $\M$ is a hyperfinite von Neumann algebra and $E$ is an
operator space, there is a notion, due to Pisier, of $E$-valued
noncommutative $L_p$-spaces, see \cite{Pi1998}. Indeed, if $\M = M_n(\CC)$
is a matrix algebra, then $S^n_p[E]$ can be defined by operator space
interpolation as follows
\[
  S^n_p[E] := \big[S^n_1 \osprojtensor E, M_n(\CC) \mintensor E\big]_{\frac{1}{p}},
\]
where $\osprojtensor$ and $\mintensor$ are the operator space projective
tensor product and minimal tensor product respectively.
By hyperfiniteness we can approximate $L_p(\M;E)$ by taking direct sums
and unions of such finite dimensional $E$-valued Schatten classes. This
construction was further generalized to the context of QWEP von Neumann
algebras by Junge in an unpublished work. A particularly important case of the
above construction is that of $E = \ell_\infty$. The reason being
that the $L_p(\ell_\infty)$-norm can be used to define maximal
operators in the noncommutative setting. Indeed, recall that if
$(x_n)_n \subset \M$ is a family of noncommuting operators their
supremum is an ill-defined object. Nevertheless, thank to the
$\ell_\infty$-valued noncommutative $L_p$-spaces we can speak
unambiguously about the $L_p$-norm of the supremum just by taking
\[
  \Big\| {\sup_{1 \leq n}}^{+} \big\{ x_n \big\} \Big\|_{L_p(\M)} := \big\| (x_n)_n \big\|_{L_p(\M;\ell_\infty)},
\]
where the ${\sup}^+$ in the left hand side is a purely symbolical element.
It is also worth recalling that $L_p(\M;\ell_\infty)$ spaces can be
characterized as the elements inside $\ell_\infty[L_p(\M)]$ admitting a
factorization of the form $x_n = \alpha \, y_n \, \beta$, where
$\alpha, \beta \in L_{2p}(\M)$ and
$(y_n) \in \ell_\infty \weaktensor \ell_\infty$. In fact the norm is given
by
\[
  \big\| (x_n)_n \big\|_{L_p(\M;\ell_\infty)} = \inf \Big\{ \| \alpha \| \, \sup_{n} \| y_n \| \, \| \beta \| \Big\},
\]
where the infimum is taken over all possible decompositions
$x = \alpha \, y \, \beta$. When $(x_n)_n$ is a positive
element the quantity above can be reduced to
\begin{equation}
  \label{CP.S0.max}
  \big\| (x_n)_n \big\|_{L_p(\M;\ell_\infty)} = \inf_{x_n \leq z} \big\{ \| z \|_{L_p(\M)} \}.
\end{equation}
Indeed the above decomposition allows to generalize the $L_p(\M;\ell_\infty)$-spaces
to the context of non hyperfinite von Neumann algebras $\M$. The noncommutative
$L_p(\ell_\infty)$-spaces have been used in the past to generalize certain
maximal inequalities to the von Neumann algebra setting. Concretely,
such technique has been employed in the past to prove noncommutative
versions of the Doob maximal inequality for martingales \cite{Jun2002Doob} and
noncommutative ergodic theorems \cite{JunXu2007}. In \cite{GonJunPar2015},
maximals bounds were used to prove a principle of boundedness of Fourier
multipliers by maximal operators in the noncommutative setting.

\subsection*{\it Summary.}
After recalling a few facts from amenable actions in Section \ref{CP.S1} we
will state and prove our main results in Section \ref{CP.S2}. In particular,
we are going to see that if $T_m:L_p(\L G) \to L_p(\L G)$
is a completely bounded Fourier multiplier, then its crossed product
extension $\Id \rtimes T_m$, given by
$(\Id \rtimes T_m)(x \rtimes \lambda_g) = x \rtimes m(g) \lambda_g$ is
completely bounded in $L_p(\M \rtimes_\theta G)$, provided that the action
$\theta$ is \emph{amenable}, see \cite[Section 4.3]{BroO2008}
or \cite[Chapter 4]{Zimmer1984} for a precise definition. Furthermore, our
technique yields that
\begin{equation}
  \label{CP.eq.0}
  \| \Id \rtimes T_m \|_{\CB(L_p(\M \rtimes_\theta G))}
  \leq \| S_m \|_{\CB(S_p)}
  \leq \| T_m \|_{\CB(L_p(\L G))},
\end{equation}
see Corollary \ref{CP.cor.bound}.
The techniques involved in the proof of such result are a generalization of the
theorems in \cite{NeuRic2011} and \cite{CasSall2015} from amenable groups to
amenable actions. One of the novelties of our approach is that it allows us to
transfer, not just Fourier multipliers acting on the $G$-component of
$\M \rtimes_\theta G$, but $\theta$-equivariant operators acting of $\M$.
Indeed, strengthening the amenability of $\theta$ by imposing an
accretivity condition on its generalized ``F{\o}lner sets'' gives a
transference result for any completely bounded and $\theta$-equivariant
operator $S$ in $\CB(L_p(\M))$ as follows
\begin{equation}
  \label{CP.eq.1}
  \| S \rtimes \Id \|_{\CB(L_p(\M \rtimes_\theta G))}
  \leq C^\frac{1}{p} \, \| S \|_{\CB(L_p(\M))},
\end{equation}
where $C \geq 1$ is a constant measuring the accretivity of such sets, see
Corollary \ref{CP.cor.bound}. In every example of amenable actions we have worked
so far we can build approximating sequences whose accretivity constant is $C=1$.
We conjecture that such is the case for all amenable actions.
In Section \ref{CP.S1} we will state precisely the amenability condition required
for our theorems and review briefly the equivalent definitions of amenability
for actions.

In Section \ref{CP.S3} we will prove an operator-valued extension of the
transference results described above. Our extension of the transference
results to the $\ell_\infty$-valued case
allows us to obtain maximal \emph{strong-type maximal inequalities} in
crossed products. Concretely, if $(T_n)_{n \geq 0}$ is a family of
completely positive Fourier multipliers over
$L_p(\L G)$ and $(S_m)_{m \geq 0}$ is a family of completely positive
and $\theta$-equivariant operators in $L_p(\M)$ and $\| u \|_p \leq 1$
we have an inequality of the form
\begin{eqnarray}
  &                  & \Big\| \, {\sup_m}^{+} {\sup_n}^{+} \big\{ (S_m \rtimes T_n)(u) \big\} \Big\|_{L_p(\M \rtimes_\theta G)} \label{CP.eq.2}\\
  & \lesssim^{(\cb)} & C^\frac{1}{p} \, \big\| (T_m)_m: L_p(\L G) \to L_p(\L G; \ell_\infty) \big\|_\cb \, \big\| (S_n)_n: L_p(\M) \to L_p(\M;\ell_\infty) \big\|_\cb \nonumber
\end{eqnarray}
whenever $\theta$ has an approximating sequence with accretivity constant $C$.
Observe that such inequality is a trivial consequence of Fubini type
argument when $\M$ and $G$ are abelian and the action $\theta$ is trivial,
since $\M \rtimes_\theta G = \M \weaktensor \L G$.

As a consequence of those maximal inequalities we obtain that the completely
bounded Hardy-Littlewood inequalities, denoted by $\mathrm{CBHL}_p$, in 
\cite{GonJunPar2015} are stable under crossed products if natural
invariance conditions are satisfied. Since the rest of the so-called
\emph{standard assumptions} in \cite[Definition 2.6]{GonJunPar2015}
are easily verified for crossed products, we obtain that
the standard assumptions are stable under crossed products, see Theorem
\ref{CP.thm.Stability}. Observe that, for that application, we could have
just used the amenability of $G$ since, by
\cite[Remark 2.4/2.5]{GonJunPar2015}, the standard assumptions imply
amenability.

\section{\bf Amenability of actions}
\label{CP.S1}

The purpose of this section is to recall a few facts from the theory of
amenable actions and provide suitable references. Up to Definition
\ref{CP.def.Folner} all the material here presented is 
standard and we include it for the sake of completeness. 

Let $(X, \Sigma_X, \nu)$, or simply $(X, \nu)$ if the
$\sigma$-algebra is understood from the context, be a $\sigma$-finite
measure space. We will say that a group homomorphism
$\theta: G \to \Aut(X,\nu)$ is
action of $G$ on $X$ iff the map $(g,x) \mapsto \theta_g(x) = g \, x$ is
measurable and $(\theta_g)_\ast \nu$ and $\nu$ are mutually absolutely
continuous. When $(\theta_g)_\ast \nu(E) = \nu(\theta_g E) = \nu(E)$ we
will say that $\theta$ is $\nu$-preserving. Throughout this text we will
assume that every measure space $(X,\Sigma_X,\nu)$ is given the Borel
structure of an underlying locally compact topological space and that
the measure $\nu$ is regular. Recall also that the action $\theta$ extends
trivially to an action over the functions on $X$. We are going to denote it,
perhaps ambiguously, by $\theta_g f(x) = f(\theta_{g^{-1}} x)$. As usual,
if there is no confusion we may just write $f(g^{-1} \, x)$ or $g \, f$
instead of $f(\theta_{g^{-1}} x)$.

A group $G$ is said to be amenable if there is a translation
invariant mean $m \in L_\infty(G)^\ast$, i.e. an element
$m \in L_\infty(G)^\ast$ such that $m(f) \geq 0$ for every $f \geq 0$,
$m(\chi_{G})=1$ and $m(f(g^{-1} \cdot)) = m(f)$ for every $g \in G$. We
now define a weaker notion of amenability for an action on a von Neumann
algebra.

\begin{definition}
  \label{CP.def.AmenAct}
  Let $\theta: G \to \Aut(X,\nu)$ be an action, we will say that it is
  amenable iff there is a (not necessarily normal) $\theta$-equivariant
  conditional expectation $\Ee:L_\infty(X) \weaktensor L_\infty(G) \to
  L_\infty(X)$, i.e. a unital, positivity preserving and $L_\infty(X)$-linear
  map, such that 
  \[
    \Ee \, f(\theta_{g^{-1}} \cdot, g^{-1} \, \cdot)  = \theta_g \, \Ee(f).
  \]
  
  If $(\M, \varphi)$ is a von Nuemann algebra and $\varphi$ a normal,
  semifinite and faithful weight, an action $\theta: G \to \Aut(\M)$
  is amenable iff its restriction to the abelian subalgebra
  $(\Zent(\M), \varphi{|}_{\Zent(\M)})$ is amenable, where $\Zent(\M)$
  denotes the center of $\M$.
\end{definition}

Observe that, trivially, if $G$ is amenable all of its actions are
amenable, just take $\Ee = \Id_{L_\infty(X)} \otimes m$, for any
$G$-invariant mean $m$. Reciprocally if $G$ acts amenably on a one-point
space then $G$ is amenable. The flexibility gained is that non-amenable
groups may have nontrivial amenable actions. We may also recall that if
$G$ acts amenably in a probability space $(X,\nu)$ and the measure $\nu$
is invariant, then, the composition $\nu \circ \Ee$ is an invariant mean.
The same holds for finite measure spaces with a $\theta$-invariant measure.

Like in the case of amenability there are several equivalent characterizations
of the property. The definition we have introduced above is not the one that appeared
first in the literature. The oldest one, to the knowledge of the author, is
that an action $\theta:G \to \Aut(X,\nu)$ is amenable iff every affine action
on a weak-$\ast$ compact convex set \emph{subordinated to $\theta$} has a
fixed point. A weak-$\ast$ compact convex $G$-set $K \subset E^*$ is said to be
subbordinated to $\theta: G \to \Aut(X,\nu)$ iff $E^*$ can be constructed by
tensoring $L_\infty(X)$ with some dual space $E_0^*$ and twisting with a
$1$-cocycle $\alpha: G \to \BM(X, \Iso(E_0))$, where $\BM(X,A)$ is the space of
Borelian functions. A very detailed introduction to such concept can be found in
\cite[Chapter 4]{Zimmer1984}. Of course, when $X = \{ p \}$, we get that any
affine action of $G$ in a compact weak-$\ast$ closed subset has a
fixed point, a condition long known to be equivalent to amenability, see
\cite{Pa1988Amenability}. Amenable actions were introduced in the pioneering
works of Zimmer, see {\cite{Zimmer1977, Zimmer1978, Zimmer1978Pairs,
Zimmer1978Induced}} following earlier results of Furstenberg
\cite{Furs1973Boundary}. The equivalence with the
definition given here was proved in \cite{AdamEllGior1994}. 
We shall also point out that the notion of amenability stated here is
sometimes referred to as Zimmer-amenability. It shall not be confused with
the very different notion of $X$ admitting a $\theta$-invariant mean
$m \in L_\infty(X)^\ast$ which is sometimes also referred to as amenability
for an action.

It is important to recall that amenability of actions can be defined for
continuous actions on topological spaces. Pretty much in the same way in
which measurable groups are somehow the same objects as topological groups,
see \cite[Chapter 5:6]{Va1985QuantumBook}, topological amenable actions
are {\it the same object} as Borel amenable actions. In order to clarify this we
will need the following proposition. Recall that we are going to denote by
$\PP(G)$, the probability measures with the $\sigma(C_0(G))$-topology and by
$\PP_0(G)$ the subset of all absolutely continuous ones with respect to the
Haar measure.

\begin{proposition}
  \label{CP.prp.BorelMaps}
  Let $\theta:G \to \Aut(X,\nu)$ be an action. It is amenable iff for every
  $m \in \PP_0(G)$, $\epsilon > 0$ and $K \subset G$ a compact subset there is
  a Borel map $\mu: X \to \PP_0(G)$ such that
  \begin{equation}
    \label{CP.eq.BorelMaps.1}
    \sup_{g \in K} \int_X \| g \, \mu^{x} - \mu^{g \, x} \|_1  \, d \, m (x) < \epsilon,
  \end{equation}
  where $g \, d \, \mu^{x}(h) = d \, \mu^{x}(g^{-1} \, h)$. 
\end{proposition}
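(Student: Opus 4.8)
The statement is an equivalence, and the plan is to read both conditions as statements about \emph{fields of means on $G$ parametrised by $X$}. Concretely, an $L_\infty(X)$-linear, unital, positive map $\Ee : L_\infty(X)\weaktensor L_\infty(G)\to L_\infty(X)$ is the same datum as an essentially unique measurable field $x\mapsto\omega^x$ of means on $L_\infty(G)$, via $\Ee(f)(x)=\omega^x\big(f(x,\cdot)\big)$, and the equivariance identity of Definition \ref{CP.def.AmenAct} translates, with the convention $g\,d\mu^x(h)=d\mu^x(g^{-1}h)$, into the \emph{exact} relation $g\cdot\omega^x=\omega^{gx}$. The Borel maps $\mu:X\to\PP_0(G)$ of the proposition are fields of \emph{genuine} probability densities, and \eqref{CP.eq.BorelMaps.1} asks that they be \emph{approximately} equivariant. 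Thus the proposition is the relative (action) version of Day's equivalence between the existence of an invariant mean and Reiter's condition, and I would prove the two implications separately.

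For the implication $\Leftarrow$, assume that for every triple $(m,\epsilon,K)$ there is a Borel field $x\mapsto\mu^x$ satisfying \eqref{CP.eq.BorelMaps.1}, and define $\Ee_{(m,\epsilon,K)}(f)(x)=\int_G f(x,h)\,d\mu^x(h)$. Measurability of $x\mapsto\mu^x$ guarantees $\Ee_{(m,\epsilon,K)}(f)\in L_\infty(X)$, and each such map is a (not necessarily equivariant) conditional expectation onto $L_\infty(X)$. The set of unital positive $L_\infty(X)$-module maps $L_\infty(X)\weaktensor L_\infty(G)\to L_\infty(X)$ is bounded and closed in the point-weak-$\ast$ topology, hence weak-$\ast$ compact, so the net indexed by the directed set of triples $(m,\epsilon,K)$ has a cluster point $\Ee$, again a conditional expectation. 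A change of variables using the stated convention bounds the equivariance defect pointwise by $\big|\Ee_{(m,\epsilon,K)}(\sigma_g f)(x)-\theta_g\Ee_{(m,\epsilon,K)}(f)(x)\big|\le\|f\|_\infty\,\|g^{-1}\cdot\mu^x-\mu^{g^{-1}x}\|_1$, where $\sigma_g f=f(\theta_{g^{-1}}\cdot,g^{-1}\cdot)$. Integrating over $X$ and invoking \eqref{CP.eq.BorelMaps.1} with $K$ enlarged to $K\cup K^{-1}$ forces the defect of the cluster point to vanish, so $\Ee(\sigma_g f)=\theta_g\Ee(f)$ and $\theta$ is amenable.

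For the implication $\Rightarrow$, start from an equivariant $\Ee$ and disintegrate it, as above, into an exactly equivariant field of means $g\cdot\omega^x=\omega^{gx}$; here $\sigma$-finiteness of $(X,\nu)$ and the standing regularity hypotheses ensure that the field is genuinely measurable. Each $\omega^x$ lies in the weak-$\ast$ closure of $\PP_0(G)\subset L_\infty(G)^\ast$, so it is a limit of densities; the real content is to select density approximants \emph{jointly measurably in $x$} and with small $L_1$-defect as in \eqref{CP.eq.BorelMaps.1}. I would obtain this by a relative Day--Namioka argument: fix a compact $K$ and consider the convex set of achievable defect-fields $\big\{\,(g\cdot\mu^x-\mu^{gx})_{g\in K}\ :\ x\mapsto\mu^x\ \text{a Borel density-field}\,\big\}$ inside $\bigoplus_{g\in K}L_1(X\times G)$ (reduced to finite subsets of $K$ by continuity in $g$). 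By Hahn--Banach it suffices to show that $0$ cannot be strictly separated from this convex set; a separating functional would be an element of $\bigoplus_{g\in K}L_\infty(X\times G)$ whose pairing with the limiting field $\omega^x$ violates $g\cdot\omega^x=\omega^{gx}$. Hence $0$ lies in the norm closure, which is exactly the existence of approximately invariant density-fields; a measurable-selection theorem (using that $\PP_0(G)$ is standard Borel) upgrades these to an honest Borel map $\mu:X\to\PP_0(G)$.

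The main obstacle is this forward direction, and inside it the passage from \emph{weak} to \emph{norm} approximate invariance via convexity together with the measurability bookkeeping: one must run the separation argument uniformly over the compact set $K$ and fibrewise over $X$ while keeping the chosen densities jointly Borel, and one must justify disintegrating the possibly \emph{non-normal} expectation $\Ee$ into a measurable field of means. Once the convex set of defects is set up in $L_1(X\times G)$ against the reference measure---so that its weak and norm closures coincide---the remaining estimates are routine.
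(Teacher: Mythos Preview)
Your proposal is essentially correct and follows the same overall architecture as the paper: for $\Leftarrow$, associate to each approximate density-field a conditional expectation and extract a weak-$\ast$ cluster point; for $\Rightarrow$, approximate the given equivariant $\Ee$ by density-fields and upgrade weak approximate equivariance to the $L_1$-estimate \eqref{CP.eq.BorelMaps.1} via a convexity argument. Your $\Leftarrow$ direction matches the paper almost verbatim, including the identification of $\B(L_\infty(X\times G),L_\infty(X))$ as a dual space and the use of Banach--Alaoglu.

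The forward direction is organised differently, and the paper's route is worth knowing because it neutralises precisely the obstacle you single out. You propose to first disintegrate the (possibly non-normal) $\Ee$ into a measurable field of means $x\mapsto\omega^x$ and then run a Day--Namioka separation argument against that exactly equivariant field, finishing with a measurable-selection step. The paper bypasses the disintegration entirely: it uses that \emph{normal} conditional expectations are weak-$\ast$ dense among all conditional expectations $L_\infty(X\times G)\to L_\infty(X)$, and that the normal ones are exactly those coming from Borel maps $X\to\PP_0(G)$. This immediately yields a net of genuine Borel density-fields $\mu_\alpha$ with $\Ee_{\mu_\alpha}\to\Ee$, hence with $g\mu_\alpha^x-\mu_\alpha^{gx}\to 0$ in the $\sigma(L_1(X;L_\infty G))$-topology; the upgrade to norm convergence is then the standard Mazur argument that weak and norm closures of a convex set agree. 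The payoff is that one never has to make sense of a measurable field of \emph{non-normal} means extracted from $\Ee$, nor to appeal to a measurable-selection theorem at the end, since the approximants are Borel density-fields from the start. Your Hahn--Banach separation picture is morally the same convexity step, just applied before rather than after producing the approximating net.
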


Whenever a net $(\mu_\alpha)_\alpha$ satisfies condition
\eqref{CP.eq.BorelMaps.1} for every $m$, $\epsilon$ and $K$ provided that
$\alpha$ is large we will say that $(\mu_\alpha)_{\alpha}$ is
\emph{asymptotically equivariant}. Observe that the condition in the
proposition above is equivalent to the existence of an asymptotically
equivariant net. To see that, just denote by $\mu_{m,\epsilon, K}$ the Borel
measurable map in Proposition \ref{CP.prp.BorelMaps} and by $A$ the net
given by all triples $(m,\epsilon,K)$ with the natural order.

\begin{proof}
  Given any Borel map $\mu: X \to \PP_0(G)$ we can associate to it a unital, positivity
  preserving and $L_\infty(X)$-linear map
  $\Ee_\mu: L_\infty(X) \weaktensor L_\infty(G) \to L_\infty(X)$ given by
  \[
    \Ee_\mu(f)(x) = \int_G f(x,g) \, d \, \mu^{x}(g).
  \]
  Clearly all such maps have norm bounded by $\| \Ee_\mu(\1) \|_\infty = 1$.
  The space of bounded maps $\B(L_\infty(X \times G), L_\infty(X))$ is a dual
  Banach space since
  \[
    \begin{array}{rclll}
      \B(L_\infty(X \times G), L_\infty(X)) & = & L_\infty(X) \weaktensor L_\infty(X \times G)^\ast &   &\\
                                            & = & L_1(X)^\ast \weaktensor L_\infty(X \times G)^\ast &   &\\
                                            & = & (L_1(X) \osprojtensor L_\infty(X \times G))^\ast  & = & L_1(X; L_\infty(X \times G))^\ast,
    \end{array}
  \]
  and the pairing is given by extension of
  \[
    \langle m \otimes f ,\Ee \rangle = \langle m, \Ee(f) \rangle = \int_X \Ee(f) \, d \, m.
  \]
  Therefore, by the Banach-Alaoglu compactness theorem, the net
  $(\Ee_{\mu_\alpha})_\alpha$ has a weak-$\ast$ accumulation point $\Ee$.
  Since the subset of all conditional expectations is clearly weak-$\ast$
  closed, $\Ee$ is also a conditional expectation. We have to see that if
  $(\mu_\alpha)_\alpha$ is asymptotically invariant, then $\Ee$ is
  equivariant. But that is obvious since we have 
  \begin{eqnarray*}
     \big\langle \Ee_{\mu_\alpha}(\theta_g f) - \theta_g(\Ee_{\mu_\alpha}(f), m \big\rangle & =    & \int_X \int_G f(g^{-1}  \, x, g^{-1} \, h) \, \big\{ d \, \mu_\alpha^{x}(g \, h) - d \, \mu_\alpha^{g^{-1}}(h) \big\} \, d\, m(x)\\
                                                                                            & \leq & \| f \|_\infty \int_X \, \big\| g^{-1} \, \mu_\alpha^{x} - \mu_\alpha^{g^{-1} \, x} \big\|_1  \, d \, m (x)
  \end{eqnarray*}
  and for every $g \in G$ such quantity can be made arbitrarily small.
  
  For the reciprocal we have to use that the space of
  normal conditional expectations from $L_\infty(X \times G)$
  to $L_\infty(X)$ is dense inside the set of all conditional expectations
  with respect to the weak-$\ast$ topology. Notice that, by applying the
  Hahn-Banach theorem in every fibre, normal conditional
  expectations are in correspondence with measurable maps
  $\mu: X \to \PP_0(G)$. If $\Ee$ is an
  equivariant conditional expectation we have that there is a net 
  $(\mu_\alpha)$ of Borel maps with $\Ee_{\mu_\alpha} \to \Ee$ in the
  weak-$\ast$ topology. The net $\mu_\alpha$ is asymptotically
  equivariant. After identifying Borel maps $X \to \PP_0(G)$ with a subset
  of $L_\infty(X;L_1 G )$ we have that the weak-$\ast$ topology of
  $\B(L_\infty(X \times G),L_\infty(X))$ corresponds to the
  $\sigma(L_1(X;L_\infty G))$ topology. In particular, for every $g \in G$
  we have that $g \, \mu_\alpha^{x} - \mu_\alpha^{g \, x}$ tends to zero in
  the $\sigma(L_1(X;L_\infty G))$ topology. In particular $0$ is in the
  $\sigma(L_1(X;L_\infty G))$-closed convex hull of the set of all the maps
  \[
    S_g = \big\{ x \mapsto ( g \, \mu^{x} - \mu^{g \, x} ) \big\}.
  \]
  It is easily seen that such convex set equals the closure of $S_g$ in
  the coarsest linear topology making all maps 
  \[
    \mu \mapsto \int_X \| \mu^{x} \|_1 \, d \, m(x)
  \]
  continuous. Taking a sequence in the convex hull of $S_g$ converging to $0$
  in such topology gives the claim.
\end{proof}

We recall now the definition of amenability for topological actions. We will say
that an action of $G$ by homeomorphisms $\theta: G \to \Homeo(X)$ is a topological
action iff the map $(g,x) \mapsto \theta_g(x)$ is continuous.

\begin{definition}
  \label{CP.def.TopAmen}
  Let $X$ be a locally compact topological space and $\theta$ a topological
  action. The action is said to be amenable iff there is a net of continuous maps
  $\mu_\alpha: X \to \PP(G)$, such that for every $g \in G$
  \[
    \lim_\alpha \, \sup_{x \in X} \| g \, \mu_\alpha^{x} - \mu_\alpha^{g \, x} \|_1 = 0.
  \]
  Similarly, an action of $G$ in a $C^\ast$-algebra $\A$ is amenable iff its
  restriction to the center $\Zent(\A)$ is (topologically) amenable.
\end{definition}

Observe that, since $\PP(G)$ is a compact, each $\mu_\alpha$ can be lifted
to a continuous function on $\beta X$, its Stone-{\v{C}}ech compactification
and so we obtain that, by construction, a continuous action $\theta$ on $X$
is amenable iff its lift $\beta \theta$ to $\beta X$ is amenable.

Such topological definition of amenability appeared in the form above for the
first time in \cite{HigRoe2000Novikov}. In contemporary literature is more
common to see amenable actions defined in terms of topological spaces. The
topic of topological amenable actions has been researched in connection with
exactness for groups, a notion introduced in \cite{KirchWass1999}, since it
was proved in \cite{Oz2000} that a discrete group is exact
iff it has an amenable action on a compact space. See also \cite{Oz2006} for
more on amenable actions.

Recall that we can identify continuous functions $x \in C_c(G;\M)$ with
elements inside $\M \rtimes_\theta G$ and that the operator valued weight
$\EE_\M: (\M \rtimes_\theta G)_+ \to \M_+^\wedge$ satisfies that
\begin{equation}
  \label{CP.eq.L2Cross}
  \EE_\M \big[ x \, x^\ast \big] = \int_G x(g) \, x(g)^\ast \, d \, \mu(g),
\end{equation}
for any $x, y \in \M \rtimes_\theta G$. When working with $C^\ast$ algebraic
crossed products like bellow there is no ambiguity assuming
$\A \subset \A^{\ast \ast}$ to define $\EE_\A$. The characterization below
is easily seen to be equivalent to amenability.

\begin{lemma}{{\bf (\cite[Definition 4.3.1/Lemma 4.3.7]{BroO2008})}}
  \label{CP.lem.Approx}
  An continuous action $\theta: G \to \Aut(\A)$, where $\A$ is a unital
  $C^\ast$-algebra is amenable iff there is a net 
  $(x_\alpha)_\alpha \subset C_c(G;\Zent(\A))$ of compactly supported
  functions satisfying
  \begin{enumerate}[label= \emph{(\roman*)}, ref={(\roman*)}]
    \item \label{CP.lem.Approx.1} $0 \leq x_\alpha(g).$
    \item \label{CP.lem.Approx.2} $\displaystyle{\int_G |x_\alpha(g)|^2 \, d \, \mu(g)} = \1_\A.$
    \item \label{CP.lem.Approx.3}
      $\lim_\alpha \EE_\A \big\{ ((\1 \rtimes \lambda_g) \, x_\alpha - x_\alpha) \, ((\1 \rtimes \lambda_g) \, x_\alpha - x_\alpha)^\ast \big\} \quad \\
       \quad  = \displaystyle{\lim_\alpha \int_G | \theta_g(x_\alpha(g^{-1} \, h)) - x_\alpha(h) |^2 \, d \, \mu(h)} = 0.$
  \end{enumerate}
  Any such net will be called an \emph{approximating sequence}.
\end{lemma}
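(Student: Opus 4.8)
The plan is to adapt the Brown--Ozawa argument to the present locally compact, unimodular setting, the whole point being a dictionary between the probability-measure-valued maps of Definition~\ref{CP.def.TopAmen} and the square roots of their Haar densities. Since amenability of $\theta\colon G\to\Aut(\A)$ means, by definition, topological amenability of its restriction to $\Zent(\A)$, and since every element appearing in \ref{CP.lem.Approx.1}--\ref{CP.lem.Approx.3} is central, I would first replace $\A$ by its center. As $\A$ is unital, $\Zent(\A)\cong C(X)$ for a compact Hausdorff $X$, on which $G$ acts by the transpose action $(\theta_g a)(x)=a(g^{-1}x)$, and Definition~\ref{CP.def.TopAmen} reads: there is a net of continuous maps $\mu_\alpha\colon X\to\PP(G)$ with $\lim_\alpha\sup_{x\in X}\|g\,\mu_\alpha^x-\mu_\alpha^{gx}\|_1=0$ for every $g\in G$. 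The bridge is the assignment $d\mu_\alpha^x=|x_\alpha(\cdot)(x)|^2\,d\mu$; that is, $x_\alpha(h)(x)$ is a continuous, positive, compactly supported square root of the Haar density of $\mu_\alpha^x$. I would also record at the outset that the two expressions in \ref{CP.lem.Approx.3} agree: writing the crossed-product product as the twisted convolution $\big((\1\rtimes\lambda_g)\,x_\alpha\big)(h)=\theta_g(x_\alpha(g^{-1}h))$ and feeding $y=(\1\rtimes\lambda_g)x_\alpha-x_\alpha$ into \eqref{CP.eq.L2Cross}, commutativity of $\Zent(\A)$ turns $\EE_\A[y\,y^\ast]$ into $\int_G|\theta_g(x_\alpha(g^{-1}h))-x_\alpha(h)|^2\,d\mu(h)$.

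For the direction that manufactures the measures from an approximating sequence, conditions \ref{CP.lem.Approx.1} and \ref{CP.lem.Approx.2} say exactly that each $d\mu_\alpha^x=|x_\alpha(\cdot)(x)|^2\,d\mu$ is a genuine probability measure depending continuously on $x$, so $\mu_\alpha\colon X\to\PP_0(G)$ is continuous. The only work is to pass the $L_2$-smallness of \ref{CP.lem.Approx.3} to $L_1$-smallness of the total variation. After the substitution $x\mapsto gx$, the integrand of \ref{CP.lem.Approx.3} is $|\sqrt f-\sqrt k\,|^2$, where $f$ and $k$ are the Haar densities of $g\,\mu_\alpha^x$ and $\mu_\alpha^{gx}$; I would then invoke the elementary Hellinger inequality $\|f-k\|_1\le\|\sqrt f-\sqrt k\|_2\,\|\sqrt f+\sqrt k\|_2\le 2\,\|\sqrt f-\sqrt k\|_2$, the last bound using \ref{CP.lem.Approx.2}. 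Taking suprema over $x$ and limits over $\alpha$ yields the asymptotic equivariance of Definition~\ref{CP.def.TopAmen}.

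The substantive direction is the converse: starting from a net $\mu_\alpha\colon X\to\PP(G)$, I must produce $x_\alpha\in C_c(G;\Zent(\A))$, and the difficulty is that the $\mu_\alpha^x$ need be neither absolutely continuous nor compactly supported. I would cure this in three steps, each checked to be essentially contractive for the invariance defect. First, right-convolve by a fixed compactly supported approximate unit $\psi$: since left translation commutes with right convolution, $g\,(\mu_\alpha^x\ast\psi)-\mu_\alpha^{gx}\ast\psi=(g\,\mu_\alpha^x-\mu_\alpha^{gx})\ast\psi$, whose $L_1$-norm is at most the total variation of $g\,\mu_\alpha^x-\mu_\alpha^{gx}$, while the convolved measure acquires the jointly continuous density $\tilde f_\alpha^x(h)=\int_G\psi(s^{-1}h)\,d\mu_\alpha^x(s)$. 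Second, multiply by a cutoff supported on a large compact $K_\alpha\subset G$ and renormalize, landing in $C_c(G;C(X))$; the renormalization constants are uniformly close to $1$, so only a small $\alpha$-independent error is added to the defect. Third, set $x_\alpha=\sqrt{\tilde f_\alpha}$: positivity and \ref{CP.lem.Approx.2} are immediate, and the reverse Hellinger bound $\|\sqrt f-\sqrt k\|_2^2\le\|f-k\|_1$ transports the $L_1$-invariance into \ref{CP.lem.Approx.3}.

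The main obstacle is the uniform truncation in the second step: one needs a single compact $K_\alpha$ that captures almost all the mass of $\mu_\alpha^x\ast\psi$ simultaneously for every $x\in X$, for otherwise the support in $G$ is not uniform and $x_\alpha$ fails to lie in $C_c$. The resolution I would use is that, since each $\mu_\alpha$ takes values in genuine probability measures, vague continuity of $x\mapsto\mu_\alpha^x$ automatically upgrades to narrow continuity (no mass escapes in the limit); as $X$ is compact, the image is then a uniformly tight family, which provides the desired $K_\alpha$ with renormalization constants uniformly near $1$. Should tightness be delicate for non-metrizable $G$, one can instead re-index the net by the truncation tolerance and absorb the truncation error into the limit over the enlarged net, exactly as in the reduction following Proposition~\ref{CP.prp.BorelMaps}.
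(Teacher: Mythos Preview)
The paper does not supply its own proof of this lemma: it is quoted verbatim as \cite[Definition 4.3.1/Lemma 4.3.7]{BroO2008}, preceded only by the remark that ``the characterization below is easily seen to be equivalent to amenability.'' So there is nothing to compare your argument against in the paper itself.

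That said, your sketch is a faithful and correct adaptation of the Brown--Ozawa proof to the locally compact unimodular setting. The dictionary $d\mu_\alpha^x=|x_\alpha(\cdot)(x)|^2\,d\mu$ together with the two Hellinger-type inequalities (one in each direction) is exactly the mechanism used there, and your three-step regularization (right-convolution to obtain continuous densities, compact truncation, square root) is the standard route. The one point that deserves a little more care is the uniform-in-$x$ compact truncation: your tightness argument via compactness of $X$ and the upgrade from vague to narrow continuity is fine when $G$ is second countable, but for general locally compact $G$ the cleanest fix is the one you mention at the end---re-index the net by pairs $(\alpha,K)$ with $K\subset G$ compact and absorb the truncation error into the new limit, which avoids any metrizability issues.
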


The following proposition ensures that if $X$ is the Borel space underlying
a compact space and $\theta$ a continuous action, then $\theta$ is amenable
in the measurable sense iff it is amenable in the topological sense.

\begin{proposition}[{{\bf \cite[Proposition 5.2.1]{BroO2008}}}]
  \label{CP.prp.ConToBorel}
  Let $X$ be a compact Hausdorff space and $\theta$ a continuous action
  of $G$ on $X$. Then $\theta$ is (topologically) amenable iff we can take a
  net of asymptotically equivariant Borel maps $\mu_\alpha: X \to \PP_0(G)$.
\end{proposition}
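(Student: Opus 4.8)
The plan is to read the right-hand side through Proposition \ref{CP.prp.BorelMaps}: a net of asymptotically equivariant Borel maps $\mu_\alpha:X\to\PP_0(G)$ exists precisely when $\theta$ is amenable in the sense of Definition \ref{CP.def.AmenAct}. Thus the task reduces to comparing the topological net of Definition \ref{CP.def.TopAmen} (continuous, $\PP(G)$-valued, with uniform-in-$x$ control of the defect $g\,\mu^{x}-\mu^{g\,x}$) with the asymptotically equivariant net of \eqref{CP.eq.BorelMaps.1} (Borel, $\PP_0(G)$-valued, with the defect controlled only after integrating in $x$ and uniformly over compact $K\subset G$). Both are net conditions, so I would prove the two implications separately.

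For the forward implication I would start from a topologically amenable net $\mu_\alpha:X\to\PP(G)$ and fix once and for all a reference density $m_0\in\PP_0(G)$. Right-convolving, $\nu_\alpha^{x}=\mu_\alpha^{x}\ast m_0$, lands in $\PP_0(G)$ (the convolution of any probability measure with an absolutely continuous one is absolutely continuous, with $L_1$-density), keeps $x\mapsto\nu_\alpha^{x}$ continuous, and, since convolution is an $L_1$-contraction that commutes with the left translation action in the sense $g\cdot(\mu\ast m_0)=(g\cdot\mu)\ast m_0$, satisfies $\|g\,\nu_\alpha^{x}-\nu_\alpha^{g\,x}\|_1\le\|g\,\mu_\alpha^{x}-\mu_\alpha^{g\,x}\|_1$. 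Continuous maps are Borel, and the uniform-in-$x$ defect dominates the integral in \eqref{CP.eq.BorelMaps.1}. The only genuine point is upgrading the pointwise-in-$g$ convergence of Definition \ref{CP.def.TopAmen} to the uniform-on-$K$ smallness required by asymptotic equivariance; I would do this exactly as at the end of the proof of Proposition \ref{CP.prp.BorelMaps}, by replacing the net by convex combinations of its members and using that the defects form a convex set whose weak closure contains $0$.

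The reverse implication is the substantial one. Here I am given Borel maps $\mu_\alpha:X\to\PP_0(G)$ good only in the integrated sense and must manufacture continuous maps good in the $\sup_x$ sense. The mechanism I would use is regularization by a continuous partition of unity on the compact space $X$: for a fine finite open cover $\{U_i\}$ with sample points $x_i\in U_i$ and a subordinate partition of unity $\{\phi_i\}$, the interpolant $\nu^{x}=\sum_i\phi_i(x)\,\mu_\alpha^{x_i}$ is continuous in $x$ and is again a convex combination of densities, hence lands in $\PP_0(G)$. Compactness is what makes this work: there are only finitely many sample points, and for a fixed compact $K$ the integral bound in \eqref{CP.eq.BorelMaps.1}, together with Markov's inequality and an outer-regularity (Lusin-type) argument, lets me choose every $x_i$ inside the large-$m$-measure region on which $\|g\,\mu_\alpha^{x_i}-\mu_\alpha^{g\,x_i}\|_1$ is pointwise small; uniform continuity of $(g,x)\mapsto g\,x$ and the fineness of the cover then convert this finite family of pointwise estimates into a uniform bound on $\sup_x\|g\,\nu^{x}-\nu^{g\,x}\|_1$.

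I expect the last step to be the main obstacle. Integral control of the defect does not in general imply $\sup_x$ control, so the regularization must produce genuinely new maps rather than merely estimate the old ones, and the delicate bookkeeping is to arrange a \emph{single} choice of sample points and cover that is simultaneously good for all $x\in X$ and all $g$ in the compact set $K$. The key leverage is precisely the compactness of $X$, which forces the partition of unity to be finite and thereby reduces the required uniformity to finitely many pointwise estimates, each of which is supplied by the integrated hypothesis once the reference measure $m$ is chosen to have full support.
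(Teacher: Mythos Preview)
The paper does not supply a proof of this proposition: it is quoted verbatim from \cite[Proposition 5.2.1]{BroO2008} and left unproved, so there is no in-paper argument to compare against. I can only assess your attempt on its own terms.

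Your forward implication is essentially correct. Right-convolving by a fixed $m_0\in\PP_0(G)$ pushes $\PP(G)$ into $\PP_0(G)$, and since right-convolution commutes with left translation and contracts the total-variation norm, the defect of $\nu_\alpha$ is dominated by that of $\mu_\alpha$. Continuous maps are Borel, and the $\sup_x$ bound trivially controls the integral in \eqref{CP.eq.BorelMaps.1}. The upgrade from pointwise-in-$g$ to uniform-on-compacta needs a word, but your convexity argument (or a direct continuity argument using compactness of $X$) handles it.

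The reverse implication, however, has a genuine gap. Write out what your interpolant $\nu^x=\sum_i\phi_i(x)\,\mu_\alpha^{x_i}$ gives:
\[
  g\,\nu^{x}-\nu^{g\,x}
  \;=\;\sum_i \phi_i(x)\,\bigl(g\,\mu_\alpha^{x_i}-\mu_\alpha^{g\,x_i}\bigr)
  \;+\;\Bigl(\sum_i \phi_i(x)\,\mu_\alpha^{g\,x_i}-\sum_j \phi_j(g\,x)\,\mu_\alpha^{x_j}\Bigr).
\]
The first sum is indeed small once every sample point $x_i$ lies in the ``good'' set that Markov's inequality produces. But the second bracket is a difference of two convex combinations of the values $\mu_\alpha^{g\,x_i}$ and $\mu_\alpha^{x_j}$, and nothing ties these together: the translated points $g\,x_i$ are \emph{not} sample points, and since $\mu_\alpha$ is merely Borel, $\mu_\alpha^{g\,x_i}$ need not be close in $L_1$ to any $\mu_\alpha^{x_j}$, however fine the cover. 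Uniform continuity of the action tells you $g\,x_i$ is near some $x_j$ in $X$, but it cannot force $\mu_\alpha$ to take nearby values there. So the partition-of-unity interpolant does not inherit approximate equivariance from the samples, and your last paragraph's ``delicate bookkeeping'' cannot be completed as described.

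The route actually taken in \cite{BroO2008} avoids regularizing a discontinuous map altogether: one passes through the equivalent characterization by approximating elements in $C_c(G;\Zent(\A))=C_c(G;C(X))$ (Lemma~\ref{CP.lem.Approx}), which already live in the continuous world, rather than trying to make a Borel net continuous by hand.
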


It rests to see that any measurable action comes from a topological action.

\begin{theorem}[{\bf \cite[Theorem 5.7]{Va1985QuantumBook}}]
  For any measurable action $\theta$ of $G$ in $X$ there is a
  compact Hausdorff topological space $Y$, a continuous action $\theta_0$ of
  $G$ on $Y$ and a $\theta_0$-invariant Borel subset $E \subset Y$ such that
  $X$ and $E$ are isomorphic as $G$-spaces.
\end{theorem}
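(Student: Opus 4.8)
The plan is to produce a topological model of the measurable system by passing to the Gelfand spectrum of a carefully chosen commutative $C^\ast$-algebra. Concretely, I would look for a unital, separable, $\theta$-invariant $C^\ast$-subalgebra $A \subset L_\infty(X,\nu)$ on which $G$ acts by $\ast$-automorphisms in a \emph{point-norm continuous} way and which generates the Borel structure of $X$ modulo $\nu$-null sets. Granting such an $A$, Gelfand duality gives $A \cong C(Y)$ with $Y := \mathrm{spec}(A)$ compact and metrizable (the latter since $A$ is separable), and the automorphic $G$-action on $C(Y)$ transports to an action $\theta_0$ of $G$ on $Y$ by homeomorphisms; point-norm continuity of $g \mapsto \theta_g$ on $C(Y)$ is precisely joint continuity of $(g,y) \mapsto (\theta_0)_g(y)$, so $\theta_0$ is a topological action. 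It then remains to embed $X$ equivariantly as an invariant Borel subset of $Y$.

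The construction of $A$ is the crux, and I would carry it out by convolution smoothing against the Haar measure. For $\phi \in C_c(G)$ and $f \in L_\infty(X,\nu)$ set $\phi \ast f := \int_G \phi(g)\, \theta_g(f)\, d\mu(g)$, understood as a weak-$\ast$ integral. Since $\theta_h(\phi \ast f) = (L_h \phi) \ast f$, where $L_h\phi = \phi(h^{-1}\,\cdot)$, one gets the estimate $\| \theta_h(\phi \ast f) - \phi \ast f \|_\infty \leq \|f\|_\infty \, \| L_h \phi - \phi \|_{L_1(G)}$, which tends to $0$ as $h \to e$ by continuity of translation on $L_1(G)$. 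Hence every $\phi \ast f$ has norm-continuous orbit, and the collection of elements with norm-continuous orbit is a $\theta$-invariant $C^\ast$-subalgebra. Fixing a countable family $\{f_j\}$ of bounded Borel functions separating the points of $X$ and generating its $\sigma$-algebra, I let $A$ be the $C^\ast$-algebra generated by $\{ \phi \ast f_j : \phi \in C_c(G),\ j \geq 1 \}$. Because $\phi \mapsto \phi \ast f_j$ is $\|\cdot\|_{L_1(G)}$-to-$\|\cdot\|_\infty$ continuous and $C_c(G)$ is $L_1$-separable (here one uses that $G$ is second countable), this generating set is norm-separable, so $A$ is separable; it is $\theta$-invariant by the identity above; and, by the preceding estimate, $G$ acts norm-continuously on all of $A$. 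Finally, taking $\phi = m_n$ to be an approximate identity yields $m_n \ast f_j \to f_j$ in $L_2$, hence $\nu$-a.e.\ along a subsequence, so each $f_j$ is $\sigma(A)$-measurable and $A$ recovers the Borel structure mod null.

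With $A$ in hand I would define the model map $\pi \colon X \to Y$ by $\pi(x)(a) = a(x)$, using fixed Borel representatives of a dense sequence in $A$; this is a Borel map, and it is $G$-equivariant for $\theta$ and $\theta_0$ by construction. After discarding a $\theta$-invariant $\nu$-null set (off which $A$, like the separating family $\{f_j\}$, separates points) I may assume $\pi$ is injective. By the Lusin--Souslin theorem the injective Borel image $E := \pi(X)$ is a Borel subset of $Y$; it is $\theta_0$-invariant because $\pi$ is equivariant, and $\pi \colon X \to E$ is then a Borel isomorphism intertwining $\theta$ and $\theta_0$, i.e.\ an isomorphism of $G$-spaces.

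I expect the genuine obstacles to be the two structural facts rather than the smoothing: first, that a point-norm continuous automorphic $G$-action on $C(Y)$ integrates to a jointly continuous point action on the compact space $Y$ (the translation between $\Aut(C(Y))$ and $\Homeo(Y)$, with continuity of $G \to \Homeo(Y)$ upgraded to joint continuity using local compactness of $G$); and second, the descriptive/measure-theoretic endgame, namely reducing to a conull invariant set on which $\pi$ is honestly injective and invoking Lusin--Souslin to see that $E = \pi(X)$ is Borel. Everything else — the convolution estimate, separability of $A$, $\theta$-invariance, and recovery of the $\sigma$-algebra — is routine once one assumes, as is implicit in the statement, that $G$ is second countable and $X$ is standard Borel.
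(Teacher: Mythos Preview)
The paper does not supply its own proof of this theorem: it is quoted verbatim as \cite[Theorem 5.7]{Va1985QuantumBook} and used as a black box, so there is no in-paper argument to compare against. Your outline is the classical construction of a topological model for a Borel $G$-space and is, in essence, the route taken in Varadarajan's book: build a separable $G$-invariant unital $C^\ast$-subalgebra of $L_\infty(X,\nu)$ on which the action is norm-continuous (via convolution against $C_c(G)$), pass to its spectrum, and then realize $X$ inside that spectrum as an invariant Borel set. The smoothing estimate, the separability bookkeeping, and the translation between point-norm continuity on $C(Y)$ and joint continuity on $Y$ are all standard and correctly identified.

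The one place where your sketch is a bit quick is the ``discard a $\theta$-invariant $\nu$-null set'' step. You need an \emph{invariant} conull Borel set on which the chosen Borel representatives genuinely separate points and on which the evaluation map $\pi$ is injective; producing this requires more than saying the bad set is null, since the saturation $G\cdot N$ of a null set $N$ by an uncountable group is only analytic a priori, and one must argue (via Fubini for the jointly measurable action map and quasi-invariance of $\nu$, together with the measurable selection/regularization arguments in Varadarajan) that it is contained in an invariant Borel null set. This is exactly the ``descriptive/measure-theoretic endgame'' you flag as delicate, and it is; the rest of your plan is sound.
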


Sometimes the Borel subset $E$ above can be taken to be closed without loss
of generality. One of such situations is when the action preserves a finite
measure. Let $(X,\nu)$ be a finite measure space and the action $\theta$ of
$G$ be $\nu$-preserving. If $E \subset Y$ is like in the theorem above and
$\iota: X \into E \subset Y$ we have that the finite measure
$\iota_\ast \nu \in M(Y)$ is Borelian and its support is a closed $G$-invariant
subset $\supp[\iota_\ast \nu] \subset E$. Restricting to such support amounts
to removing a null set of $X$, see \cite[Lemma 1.3]{AdamEllGior1994}. Similar
results follow for $\nu$-preserving actions when $\nu$ is an infinite regular
measure changing closed sets by locally closed sets.

As a corollary of the following discussion we obtain that any action
$\theta:G \to \Aut(\M)$ of $G$ on a von Neumann algebra $\M$ is amenable
iff it has an approximating sequence $(x_\alpha)_\alpha
\subset C_c(G;\Zent(\M))$ as in Lemma \ref{CP.lem.Approx}. We introduce
now the refinement of amenability of actions that we are going to use
through the next subsections.

\begin{definition}
  \label{CP.def.Folner}
  Let $(\M, \tau)$ be a semifinite von Neumann algebra and denote
  $(\Zent(\M),\tau{|}_{\Zent(\M)})$ by $(L_\infty(X),\nu)$. We say that
  the action $\theta: G \to \Aut(\M)$ has a $C$-approximating sequence
  iff there is a sequence of sets $F_\alpha \subset X \times G$ such that
  \begin{equation}
    \label{CP.def.acretive}
    1 \leq \esssup_{x} \mu \{ g \in G : (x,g) \in F_\alpha \} \leq C \, \essinf_{x} \mu \{ g \in G : (x,g) \in F_\alpha \} < \infty,
  \end{equation}
  and the elements
  \[
    x_\alpha(x,g) = \frac{\chi_{F_\alpha}(x,g)}{\mu \{ g \in G : (x,g) \in F_\alpha \}^{\frac{1}{2}}}
  \]
  form an approximating sequence satisfying \ref{CP.lem.Approx.3} in
  Lemma \ref{CP.lem.Approx}.
\end{definition}

Many natural amenable actions, for example that of $\FF_r$ in its
hyperbolic boundary, admit a $1$-approximating sequence. For instance, we
can just take $F_m$ the set of pairs $(\omega, \eta) \in
\partial \FF_r \times \FF_r$ such that $\omega$ is an infinite reduced word
and $\eta$ is one of the $m$ initial subwords of length less than $m$. 
The existence of $C$-approximating
sequences is stable under natural operations like tensor product extensions
$\Id \otimes \theta: G \to \Aut(\M \weaktensor \M_2)$, diagonal products
$\theta_1 \times \theta_2: G \to \Aut(\M_1 \weaktensor \M_2)$ or tensor
products $\theta_1 \otimes \theta_2: G_1 \times G_2 \to
\Aut(\M_1 \weaktensor \M_2)$. We conjecture that every amenable action 
admits $C$-approximating sequences with $C = 1$.

\section{\bf An asymptotic embedding}
\label{CP.S2}

In this section we are going to prove the main result of this article.
Observe that if $\theta: G \to \Aut(\M)$ is an action and
$\M \rtimes_\theta G$ is the reduced or spatial crossed product, then,
the embedding of $\M \rtimes_\theta G$ into $\B(\H \otimes_2 L_2 G)$ factors
through the subalgebra $\M \weaktensor \B(L_2 G)$. Indeed, after identifying
kernels $k$ in $L_\infty(G \times G;\M)$ with operators in
$\M \weaktensor \B(L_2 G)$, the embedding
$j: \M \rtimes_\theta G \to \M \weaktensor \B(L_2 G)$ is given
by extension of the map sending $u \in C_c(G;\M)$ to the operator with kernel
\[
  k(g,h) = [\theta_g^{-1}(u(g \, h^{-1}))]_{g,h \in G}.
\]
Let $T_m: \L G \to \L G$ be a normal and c.b. Fourier multiplier of symbol $m$
and denote by $(\Id \rtimes T_m)$ its crossed product amplification, i.e. the
normal operator given by linear extension of the map
$x \rtimes \lambda_g \mapsto m(g) \, x \rtimes \lambda_g$. A trivial
calculation show that the isometry $j$ intertwines $\Id \rtimes T_m$
and $\Id \otimes M_m$ as shown below
\[
  \xymatrix{
    \M \rtimes_\theta G \, \ar@{^{(}->}[rr]^-{j} \ar[d]^-{\Id \rtimes T_m} & & \M \weaktensor \B(L_2 G) \ar[d]^-{\Id \otimes M_m}\\
    \M \rtimes_\theta G \, \ar@{^{(}->}[rr]^-{j}                           & & \M \weaktensor \B(L_2 G),   
  }
\]
where $M_m: \B(L_2 G) \to \B(L_2 G)$ is the c.b. Herz-Schur multiplier given
by
\[
  M_m([a_{g \, h}]_{g,h}) = [m(g \, h^{-1}) \, a_{g \, h}]_{g \, h}.
\]
Similarly, let $S: \M \to \M$ be an operator and let us denote by
$S \rtimes \Id$ its crossed product amplification, i.e. the map given
by extension of $x \rtimes \lambda_g \mapsto S(x) \rtimes \lambda_g$. An
straightforward calculation shows that the embedding $j$ intertwines
$S \rtimes \Id$ and $S_\theta$ as follows
\[
  \xymatrix{
    \M \rtimes_\theta G \, \ar@{^{(}->}[rr]^-{j} \ar[d]^-{S \rtimes \Id} & & \M \weaktensor \B(L_2 G) \ar[d]^-{S_\theta}\\
    \M \rtimes_\theta G \, \ar@{^{(}->}[rr]^-{j}                         & & \M \weaktensor \B(L_2 G),
  }
\]
where the map $S_\theta: \M \weaktensor \B(L_2 G) \to \M \weaktensor \B(L_2 G)$
is given by
\[
  S_\theta([x_{g \, h}]) = [\theta_g^{-1} \, S \, \theta_g (x_{g \, h})]_{g,h \in G}.
\]
Therefore, if $S: \M \to \M$ is a normal c.b. and $\theta$-equivariant
operator we obtain that
\[
  \xymatrix{
    \M \rtimes_\theta G \, \ar@{^{(}->}[rr]^-{j} \ar[d]^-{S \rtimes \Id} & & \M \weaktensor \B(L_2 G) \ar[d]^-{S \otimes \Id}\\
    \M \rtimes_\theta G \, \ar@{^{(}->}[rr]^-{j}                         & & \M \weaktensor \B(L_2 G).
  }
\]
Observe that, a posteriori, such intertwining identities imply that if
$M_m$ is completely bounded so is $\Id \rtimes T_m$ and that if
$S: \M \to \M$ is completely bounded and $\theta$-equivariant so is
$S \rtimes \Id$. It is a well-known result, see \cite{BoFend1991},
\cite{CasSall2015}, that the c.b. norm of the Fourier multiplier $T_m$
bounds the c.b. norm of the Herz-Schur multiplier $M_m$. Summing all up,
we obtain the following inequalities
\[
  \begin{array}{rclll}
    \| \Id \rtimes T_m \|_\cb & \leq & \| \Id \otimes M_m \|_\cb & \leq & \| T_m \|_\cb \\
    \| S \rtimes \Id \|_\cb   & \leq & \| S \otimes \Id \|_\cb   & =    & \| S \|_\cb.
  \end{array}
\]
The purpose of this section is to generalize such results from the
crossed product von Neumann algebra $\M \rtimes_\theta G$ to its
noncommutative $L_p$-spaces. The main difficulty stems from the
fact that the isometry $j$ is not trace preserving. In fact,
it is easy to see that if $G$ is a finite group, we have that
\[
  (\tau_\M \otimes \Tr)(j \, \1) = |G| \, \tau(\1),
\]
where $\tau: (\M \rtimes_\theta G)_+ \to [0,\infty]$ is the trace extending
both $\tau_\M$ and $\tau_G$. Therefore $j$ is unbounded in
$L_1(\M \rtimes_\theta G)$ when $G$ is discrete
and infinite. Similar arguments yield that $j$ is ill-defined in $L_1$ when $G$
is noncompact. A way of circumvent this difficulty is to use amenability to
approximate the map $j$ over compact subsets of $G$. This way of proceeding was
used by E. Ricard and S. Neuwirth in \cite{NeuRic2011}, when $\M = \CC$ and
$G$ is a discrete amenable group, to prove that if a Herz-Schur multiplier is
completely bounded in $S_p(L_2 G)$, then so is the Fourier multiplier with the
same symbol in $L_p(\L G)$. Their result was generalized later by M. Caspers
and M. de la Salle in \cite{CasSall2015} to locally compact and amenable
groups. They also
proved that amenability is necessary for such theorem, at least for
$4 \leq p$ an even integer.
We are going to generalize the transference results from amenable groups to
amenable actions and from the $L_p$-spaces of group algebras $\L G$ to the
$L_p$-spaces of crossed products. The way by which we are going to proceed is
to use amenability to approximate $j$ by a net $j_p^\alpha:
L_p(\M \rtimes_\theta G) \to L_p(\M \weaktensor \B(L_2 G))$ of complete
contractions such that they are ``asymptotically isometric''. Then, we can
obtain a complete isometry by taking an ultraproduct of all such maps,
getting
\[
  (j_\alpha)_\alpha^\U:L_p(\M \rtimes_\theta G) \longrightarrow \prod_{\alpha, \U} L_p(\M \weaktensor \B(L_2 G)).
\]
Recall that the ultraproduct above must be understood in the operator space
sense, see \cite[Appendix]{EffRu2000Book}.

Let us start proving the following lemma.

\begin{lemma}
  \label{CP.lem.Iso}
  Let $(\M,\tau)$, $\theta : G \to \Aut(\M)$ be as above and assume
  that $\theta$ is $\tau$-preserving and amenable and that $G$ is
  unimodular. Let $(x_\alpha)_\alpha \subset C_c(G;\Zent(\M))$ be any
  approximating net for $\theta$ and $X_\alpha \in
  \M \weaktensor \B(L_2 G)$ be
  \[
    (X_\alpha \, \xi) (g) = \theta_g^{-1}(x_\alpha(g)) \, \xi(g),
  \]
  where $\xi \in L_2(G;\H)$ The maps $j_p^\alpha: L_p(\M \rtimes_\theta G) \to
  L_p(\M \weaktensor \B(L_2 G))$ given by
  \[
    j_p^\alpha(v) = X_\alpha^{\frac{1}{p}} \, j(v) \, X_\alpha^{\frac{1}{p}}
  \]
  satisfy that
  \begin{enumerate}[ref=(\roman*),label=\emph{(\roman*)}]
    \item \label{CP.lem.Iso1}
    $\displaystyle{\big\| j_p^\alpha: L_p(\M \rtimes_\theta G) \to L_p(\M \weaktensor \B(L_2 G)) \big\|_\cb} \leq 1$,
    for every $1 \leq p \leq \infty$.
    \item \label{CP.lem.Iso2}
    $\displaystyle{\lim_\alpha \big\langle (j_p^\alpha \, u), (j_{p'}^\alpha \, v) \big\rangle} = \langle u, v \rangle$,
    where $\displaystyle{\frac{1}{p} + \frac{1}{p'}} = 1$,
    for every $1 \leq p < \infty$.
  \end{enumerate}
\end{lemma}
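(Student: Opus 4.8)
The plan is to prove both statements at once by first reducing complete boundedness to ordinary boundedness. The construction is covariant under matrix amplification: replacing $\M$ by $M_n \otimes \M$, the action $\theta$ by $\Id_{M_n} \otimes \theta$ and the net $(x_\alpha)_\alpha$ by $(\1 \otimes x_\alpha)_\alpha$ (which is again an approximating net satisfying \ref{CP.lem.Approx.2} and \ref{CP.lem.Approx.3}) turns $\Id_{S_p^n} \otimes j_p^\alpha$ into the map of exactly the same form for the amplified data, after the identification $S_p^n[L_p(\N)] = L_p(M_n \otimes \N)$. Hence it suffices to bound the plain $L_p \to L_p$ norms, and every estimate below upgrades to a cb-estimate for free. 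Recall also that $j$ is the spatial inclusion $\M \rtimes_\theta G \into \M \weaktensor \B(L_2 G)$, a normal unital $\ast$-homomorphism with kernel $k_u(g,h) = \theta_{g^{-1}}(u(g h^{-1}))$ on $u \in C_c(G;\M)$ (so it is completely isometric at the $L_\infty$ level), and that $X_\alpha \geq 0$ is the diagonal multiplication operator with entries $\theta_{g^{-1}}(x_\alpha(g))$.

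For \ref{CP.lem.Iso1} the plan is Stein interpolation applied to the analytic family $F(z) v = X_\alpha^{z} \, j(v) \, X_\alpha^{z}$ on the strip $0 \leq \Re z \leq 1$, noting that $F(1/p) = j_p^\alpha$. On the line $\Re z = 0$ the factors $X_\alpha^{it}$ are partial isometries, so $F(it)$ is an $L_\infty$-contraction. The whole content sits at the line $\Re z = 1$: the map $v \mapsto X_\alpha \, j(v) \, X_\alpha$ is completely positive, being the composition of the $\ast$-homomorphism $j$ with conjugation by $X_\alpha$, and I claim it is trace-preserving. Indeed, for $\rho = \int_G u(s) \rtimes \lambda_s \, d\mu(s)$ a diagonal computation of $(\tau_\M \otimes \Tr)(X_\alpha \, j(\rho) \, X_\alpha) = (\tau_\M \otimes \Tr)(j(\rho) \, X_\alpha^2)$, using trace-preservation of $\theta$ and unimodularity, collapses to $\tau_\M(u(e) \int_G x_\alpha(g)^2 \, d\mu(g))$, which equals $\tau_\M(u(e)) = \tau(\rho)$ precisely by the normalization \ref{CP.lem.Approx.2}. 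A completely positive trace-preserving map is an $L_1$-contraction, and conjugating by the boundary partial isometries $X_\alpha^{it}$ does not change this, so $F(1+it)$ is an $L_1$-contraction. Stein interpolation then gives $\|j_p^\alpha\|_{\cb} \leq 1$ for $1 \leq p \leq \infty$; the usual care with the unbounded $\log X_\alpha$ is dealt with by replacing $X_\alpha$ with $X_\alpha + \varepsilon$ on its support and letting $\varepsilon \to 0$.

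For \ref{CP.lem.Iso2}, by the uniform bound just obtained and density of $C_c(G;\M)$ it suffices to verify the limit for $u, v \in C_c(G;\M)$. Writing out the trace pairing and using cyclicity together with $1/p + 1/p' = 1$, the powers of $X_\alpha$ recombine to give
\[
  \langle j_p^\alpha u, j_{p'}^\alpha v \rangle = (\tau_\M \otimes \Tr)\big( X_\alpha \, j(u) \, X_\alpha \, j(v) \big)
\]
(with the sesquilinear convention one simply replaces $v$ by $v^\ast$, with no structural change). Computing the right-hand side with the kernel of $j$, substituting $s = g r^{-1}$, and again invoking trace-preservation of $\theta$ and unimodularity, I reduce it to
\[
  \int_G \tau_\M\big( c_\alpha(s) \, u(s) \, \theta_s(v(s^{-1})) \big) \, d\mu(s), \qquad c_\alpha(s) = \int_G x_\alpha(g) \, \theta_s\big(x_\alpha(s^{-1} g)\big) \, d\mu(g) \in \Zent(\M),
\]
whereas $\langle u, v \rangle = \tau(uv) = \int_G \tau_\M(u(s) \theta_s(v(s^{-1}))) \, d\mu(s)$. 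Thus everything rests on showing $c_\alpha(s) \to \1$. Since $c_\alpha(e) = \int_G x_\alpha(g)^2 \, d\mu(g) = \1$ by \ref{CP.lem.Approx.2}, the Cauchy--Schwarz inequality in $L_2(G;\Zent(\M))$ yields $\| c_\alpha(s) - \1 \| \leq \big( \int_G |\theta_s(x_\alpha(s^{-1} g)) - x_\alpha(g)|^2 \, d\mu(g) \big)^{1/2}$, which is exactly the square root of the quantity in \ref{CP.lem.Approx.3} with $g \mapsto s$ and $h \mapsto g$, hence tends to $0$. As the $c_\alpha(s)$ are uniformly bounded (again by Cauchy--Schwarz and \ref{CP.lem.Approx.2}) and $u(s) \theta_s(v(s^{-1}))$ is supported in a fixed compact set, dominated convergence concludes the argument.

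The main obstacle is the $L_1$ endpoint in \ref{CP.lem.Iso1}, namely the \emph{exact} trace-preservation of $X_\alpha \, j(\cdot) \, X_\alpha$: this is the mechanism by which sandwiching with the generalized F{\o}lner functions repairs the failure of $j$ to be trace-preserving, and it is what forces the normalization \ref{CP.lem.Approx.2} and makes essential use of unimodularity and of $\theta$ being trace-preserving. By contrast, the $L_\infty$ endpoint, the matrix-amplification reduction to the cb-level, and the asymptotic isometry \ref{CP.lem.Iso2} are comparatively soft, the latter being a Cauchy--Schwarz estimate powered by the asymptotic equivariance \ref{CP.lem.Approx.3}.
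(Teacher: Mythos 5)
Your proof is correct, and the comparison with the paper's own argument splits cleanly in two. For part \ref{CP.lem.Iso2} your argument \emph{is} the paper's: both reduce to elements of $C_c(G;\M)$, recombine the powers $X_\alpha^{1/p}$, $X_\alpha^{1/p'}$ by traciality, change variables using unimodularity and trace-preservation of $\theta$, and control the resulting error (your $c_\alpha(s)-\1$, the paper's $A$) by operator Cauchy--Schwarz against condition \ref{CP.lem.Approx.3} of Lemma~\ref{CP.lem.Approx}, finishing with dominated convergence. For part \ref{CP.lem.Iso1} the skeleton is also the same (trivial $L_\infty$ endpoint, an $L_1$ endpoint, interpolation in between), but the $L_1$ endpoint is packaged differently: the paper factors $u=ab^\ast$ with $\|a\|_2\,\|b\|_2=\|u\|_1$ and verifies the exact $L_2$-identity $\|X_\alpha\, j(a)\|_2=\|a\|_2$, then applies Cauchy--Schwarz; you instead show that the full-power map $v\mapsto X_\alpha\, j(v)\, X_\alpha$ is completely positive and trace-preserving and invoke the standard fact that a CP trace-preserving map is a complete $L_1$-contraction. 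These are the same computation in different clothing --- the paper's $L_2$-identity is precisely your trace identity evaluated at $v=aa^\ast$ --- but your packaging has the merit of making explicit two points the paper glosses over: first, since $j_p^\alpha$ genuinely depends on $p$, plain two-endpoint interpolation does not literally apply and one needs Stein interpolation of the analytic family $v \mapsto X_\alpha^{z}\, j(v)\, X_\alpha^{z}$ (with the $\varepsilon$-regularization of the non-invertible $X_\alpha$), which you state; second, the reduction of cb-norms to plain norms via covariance of the whole construction under matrix amplification, which is clean and correct because $\Zent(M_n\otimes\M)=\CC\otimes\Zent(\M)$, so $\1\otimes x_\alpha$ is again an approximating net for $\Id_{M_n}\otimes\theta$. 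One caveat you share with the paper: at $p=1$, $p'=\infty$ the density reduction in \ref{CP.lem.Iso2} is delicate, since $C_c(G;\M)$ is only weak-$\ast$ dense in $\M\rtimes_\theta G$, not norm dense; this does not affect the way the lemma is used in Theorem~\ref{CP.thm.ultraProdIntertwining}, where by Kaplansky density and normality of $y\mapsto\langle x,y\rangle$ for $x\in L_1$ one may take the supremum over nice elements only.
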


\begin{proof}
  The proof of \ref{CP.lem.Iso1} is trivial when $p = \infty$. Proving
  it for $p = 1$ and applying interpolation yields the desired result. Let
  $u \in L_1(\M \rtimes_\theta G)$. We can decompose $u$ as $x = a \, b^\ast$,
  with $\| u \|_2 = \| v \|_2 = \| x \|^\frac{1}{2}$. We have that
  \[
    j_1^\alpha(u) = X_\alpha \, j(a) \, j(b)^\ast \, X_\alpha = (X_\alpha \, j(u)) \, (X_\alpha \, j(v))^\ast.
  \]
  But, clearly
  \[
    \| j_1^\alpha(x) \|_{L_1(\M \weaktensor \B(L_2 G))}
    \leq
    \| X_\alpha \, j(u) \|_{L_2(\M \weaktensor \B(L_2 G))} \,
    \| X_\alpha \, j(v) \|_{L_2(\M \weaktensor \B(L_2 G))}.
  \]
  It is trivial to notice that, since $\tau$ in $\theta$-invariant
  $L_2(\M \rtimes_\theta G) = L_2(\M) \otimes_2 L_2(G)$ and the isomorphism
  is given by
  \[
    \langle u, v \rangle_{L_2(\M \rtimes_\theta G)} = \int_G \tau_\M(u(g)^\ast \, v(g)) \, d \, \mu(g),
  \]
  after identifying $u$ affiliated with $\M \rtimes_\theta G$ with an
  $\M$-valued function of $G$ in the natural way. On the other hand,
  if we denote by $k(g,h) = \theta_g^{-1}(x_\alpha(g)) \, 
  \theta_g^{-1}(u(g \, h^{-1}))$ the kernel of $X_\alpha \, j(u)$, we have
  that 
  \begin{eqnarray}
    \| X_\alpha \, j(u) \|^2_{L_2(\M \weaktensor \B(L_2 G))} & = & (\tau_\M \otimes \Tr) \bigg\{ \Big[ \int_G k(g,h)^\ast \, k(k,h) \, d \, \mu(h) \Big]_{g,k} \bigg\} \nonumber \\
                                                             & = & \int_G \int_G \tau_\M \big\{ | \theta_g^{-1}(x_\alpha(g)) \, \theta_g^{-1}(u(g \, h^{-1})) |^2 \big\} \, d \, \mu(g) \, d \, \mu(h) \nonumber \\
                                                             & = & \int_G \int_G \tau_\M \big\{ | x_\alpha(g) \, u(h^{-1}) |^2 \big\} \, d \, \mu(g) \, d \, \mu(h) \label{CP.prf.eq1}\\
                                                             & = & \int_G \tau_\M \bigg\{ \Big( \int_G | x_\alpha(g) |^2 \, d \, \mu(g) \Big) \, | u(h^{-1}) |^2 \bigg\} \, d \, \mu(h) \nonumber \\
                                                             & = & \int_G \tau_\M \big\{ | u(h) |^2 \big\} \, d \, \mu(h), \label{CP.prf.eq2}
  \end{eqnarray}
  by using the $\theta$-invariance of $\tau_\M$ in \eqref{CP.prf.eq1} and 
  Condition \ref{CP.lem.Approx.2} on Lemma \ref{CP.lem.Approx} as well as the
  unimodularity of $G$ in \eqref{CP.prf.eq1}. The same follows for $v$ and this
  proves \ref{CP.lem.Iso1}.
  
  In order to prove \ref{CP.lem.Iso2} start by noticing that
  \begin{eqnarray*}
    \langle j_p^\alpha(u), j_p^\alpha(v) \rangle & = & \int_G \int_G \tau_\M \big\{  \theta_g^{-1}(x_\alpha(g)) \, \theta_g^{-1}(u(g \, h^{-1})^\ast \, v(g \, h^{-1})) \, \theta_h^{-1}(x_\alpha(h)) \big\} \, d \, \mu(g) \, d \, \mu(h) \\
                                                 & = & \int_G \int_G \tau_\M \big\{  \theta_{g \, h}^{-1}(x_\alpha(g \, h)) \, \theta_{g \, h}^{-1}( u(g)^\ast \, v(g) ) \,  \theta_h^{-1}(x_\alpha(h)) \big\} \, d \, \mu(g) \, d \, \mu(h) \\
                                                 & = & \int_G \int_G \tau_\M \big\{  x_\alpha(g \, h) \, \theta_{g}(x_\alpha(h)) \, u(g)^\ast \, v(g) \big\} \, d \, \mu(g) \, d \, \mu(h) \\
                                                 & = & \int_G \tau_\M \{ u(g)^\ast \, v(g) \} \, d \, \mu(g) + \int_G \tau_\M \big\{ u(g)^\ast \, v(g) \, A \big\} \, d\mu(g),
  \end{eqnarray*}
  where $A$ is just
  \begin{eqnarray*}
    A & =    & \int_G x_\alpha(g \, h) \, \theta_{g^{-1}}(x_\alpha(h)) \, d \mu(h) - \1_\M\\
      & =    & \int_G x_\alpha(g \, h) \, \theta_{g^{-1}}(x_\alpha(h)) \, d \mu(h) - \int_G |x_\alpha(g)|^2 \, d \, \mu(g) \\
      & =    & \int_G x_\alpha(h) \,  \big( \theta_{g^{-1}}(x_\alpha(g \, h)) - x_\alpha(h) \big) \, d \mu(h)\\
      & \leq & \bigg\| \int_G | x_\alpha(h) |^2 \, d \, \mu(h) \bigg\|^\frac{1}{2}_\M \, \bigg( \int_G | \theta_{g^{-1}}(x_\alpha(g \, h)) - x_\alpha(h) |^2 \, d \mu(h) \bigg)^\frac{1}{2}\\
      & =    & \EE_\M \big[ ((\1 \rtimes \lambda_{g^{-1}}) \, x_\alpha - x_\alpha) \, ((\1 \rtimes \lambda_{g^{-1}}) \, x_\alpha - x_\alpha)^\ast \big]^\frac{1}{2} \longrightarrow 0,
  \end{eqnarray*}
  notice that we have used identity \eqref{CP.eq.L2Cross} in the last step. Using
  Condition \ref{CP.lem.Approx.3} of Lemma \ref{CP.lem.Approx} and
  the Dominated Convergence Theorem gives the desired claim.
\end{proof}

We can now proceed to prove the main theorem of this section.

\begin{theorem}
  \label{CP.thm.ultraProdIntertwining}
  Let $(\M,\tau_\M)$, $G$ and $\theta : G \to \Aut(\M)$ be as above with
  $\theta$ amenable. For any $1 \leq p < \infty$ we have a completely
  positive and completely isometric map 
  \begin{eqnarray*}
    L_p(\M \rtimes_{\theta} G) & \xrightarrow{ \quad j_p \quad } & \prod_{\alpha, \mathcal{U}} L_p(\M \weaktensor \B(L_2 G)).
  \end{eqnarray*}
  The isometry $j_p$ satisfies that if $M_m$ and $T_m$ are the Fourier and
  Herz-Schur multipliers associated to the symbol $m$, then
  \begin{equation*}
    \label{CP.thm.CrossDiagramII}
    \xymatrix{
      L_p(\M \rtimes_\theta G) \ar[d]^-{(\Id \rtimes T_m)} \ar[rr]^-{j_p} & & \displaystyle{ \prod_{\alpha, \mathcal{U}} L_p(\M \weaktensor \B(L_2 G))}  \ar[d]^-{(\Id \otimes M_m)^\mathcal{U}}\\
      L_p(\M \rtimes_\theta G)                             \ar[rr]^-{j_p} & & \displaystyle{ \prod_{\alpha, \mathcal{U}} L_p(\M \weaktensor \B(L_2 G))},
    }
  \end{equation*}
  Furthermore, if $\theta$ has a $C$-approximating sequence and
  $S : L_p(\M) \to L_p(\M)$ is a completely bounded
  and $\theta$-equivariant operator, then
  \begin{equation*}
    \label{CP.thm.CrossDiagramI}
    \xymatrix{
      L_p(\M \rtimes_\theta G) \ar[d]^-{(S \rtimes \Id)} \ar[rr]^-{j_p} & & \displaystyle{ \prod_{\alpha, \mathcal{U}} L_p(\M \weaktensor \B(L_2 G))}  \ar[d]^-{(S_\alpha)_\alpha^\mathcal{U}}\\
      L_p(\M \rtimes_\theta G)                           \ar[rr]^-{j_p} & & \displaystyle{ \prod_{\alpha, \mathcal{U}} L_p(\M \weaktensor \B(L_2 G))},
    }
  \end{equation*}
  where
  \[
    \big\| S_\alpha: L_p(\M \weaktensor \B(L_2 G)) \to L_p(\M \weaktensor \B(L_2 G)) \big\|_\cb \leq C^\frac{1}{p} \,\big\| S : L_p(\M) \to L_p(\M) \big\|_\cb.
  \]
\end{theorem}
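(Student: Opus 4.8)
The plan is to set $j_p := (j_p^\alpha)_\alpha^\U$, the operator-space ultraproduct of the maps furnished by Lemma \ref{CP.lem.Iso}, and to read its properties off from the two conclusions of that lemma. Part \ref{CP.lem.Iso1} says each $j_p^\alpha$ is a complete contraction, so $j_p$ is a well-defined complete contraction into $\prod_{\alpha,\U} L_p(\M \weaktensor \B(L_2 G))$; it is completely positive because $j_p^\alpha(\cdot) = X_\alpha^{1/p}\, j(\cdot)\, X_\alpha^{1/p}$ is the composition of the normal $\ast$-homomorphism $j$ with conjugation by the positive operator $X_\alpha^{1/p}$, and complete positivity is stable under ultraproducts.

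To upgrade $j_p$ to a complete isometry I would argue by duality from part \ref{CP.lem.Iso2}. Fixing $u$, I choose $v$ in the unit ball of $L_{p'}(\M \rtimes_\theta G)$ with $\langle u, v\rangle = \|u\|_p$. The family $(j_{p'}^\alpha v)_\alpha$ has norm at most $1$, hence represents an element of the conjugate ultraproduct, and its pairing against $j_p(u)$ equals $\lim_\U \langle j_p^\alpha u, j_{p'}^\alpha v\rangle = \langle u,v\rangle = \|u\|_p$ by \ref{CP.lem.Iso2}; this forces $\|j_p(u)\| \geq \|u\|_p$, and together with the contraction bound gives equality. The same computation with matrix amplifications, the pairing $S_p^n[L_p] \times S_{p'}^n[L_{p'}] \to \CC$ being a finite sum of entrywise pairings to which \ref{CP.lem.Iso2} applies, yields complete isometry (for $p=1$ one pairs instead against the bounded family $(j_\infty^\alpha v)_\alpha$).

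For the Fourier/Herz--Schur square the intertwining is exact at each $\alpha$. The von Neumann algebraic diagram already gives $j \circ (\Id \rtimes T_m) = (\Id \otimes M_m) \circ j$, and since the Herz--Schur symbol multiplies a kernel by the scalar $m(g\,h^{-1})$ while $X_\alpha^{1/p}$ is a diagonal, central multiplication operator, the two commute: a direct kernel comparison shows $(\Id \otimes M_m) \circ j_p^\alpha = j_p^\alpha \circ (\Id \rtimes T_m)$ for every $\alpha$. Passing to the ultraproduct with the amplified map $(\Id \otimes M_m)^\U$ closes the first diagram.

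The equivariant square is the delicate point. Writing $D_\alpha(g) = \theta_g^{-1}(x_\alpha(g))$ and using equivariance to identify $S_\theta$ with $S \otimes \Id$, I would define $S_\alpha$ on kernels by $S_\alpha([y_{g\,h}])_{g\,h} = D_\alpha(g)^{1/p}\, S\big(D_\alpha(g)^{-1/p}\, y_{g\,h}\, D_\alpha(h)^{-1/p}\big)\, D_\alpha(h)^{1/p}$, the negative powers taken as generalized inverses on the support of $X_\alpha$. For the cb bound I would factor $X_\alpha^{1/p} = c_\alpha\, W_\alpha$ on its support, with $c_\alpha$ a scalar and $W_\alpha$ central; the accretivity hypothesis \eqref{CP.def.acretive} bounds the condition number of $W_\alpha$ by $C^{1/(2p)}$, and since the scalar $c_\alpha$ passes through $S \otimes \Id$ while $S_\alpha = \Ad_{W_\alpha} \circ (S \otimes \Id) \circ \Ad_{W_\alpha^{-1}}$, one obtains $\|S_\alpha\|_\cb \leq (\mathrm{cond}\, W_\alpha)^2\, \|S\|_\cb \leq C^{1/p}\, \|S\|_\cb$. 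The main obstacle is the intertwining itself: evaluating $S_\alpha \circ j_p^\alpha$ replaces the kernel $K_{g\,h} = \theta_g^{-1}(v(g\,h^{-1}))$ of $j(v)$ by its compression $q(g)\, K_{g\,h}\, q(h)$ to the support projections $q(g)$ of $D_\alpha(g)$, so $S_\alpha \circ j_p^\alpha$ and $j_p^\alpha \circ (S \rtimes \Id)$ differ by the weighted commutator of $S$ with these central projections. I would show this defect vanishes in the ultraproduct by the same mechanism as in Lemma \ref{CP.lem.Iso}: the boundary terms $(1-q(g))\, K_{g\,h}$ and $K_{g\,h}\,(1-q(h))$, once sandwiched by the weights, are controlled by the F{\o}lner quantity appearing in condition \ref{CP.lem.Approx.3}, which tends to zero. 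This last step is where amenability is genuinely used for the equivariant transference, and it is the part I expect to require the most care.
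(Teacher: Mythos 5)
Your construction of $j_p$ as the ultraproduct of the maps $j_p^\alpha$, the duality argument for (complete) isometry based on Lemma \ref{CP.lem.Iso}\ref{CP.lem.Iso2}, the treatment of complete positivity, and the exact entrywise commutation giving the Fourier/Herz--Schur square all coincide with the paper's proof. The divergence is in the equivariant square, and there your proposal has a genuine gap.

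The paper asserts that the intertwining $j_p^\alpha \circ (S \rtimes \Id) = S_\alpha \circ j_p^\alpha$ holds \emph{exactly} at every $\alpha$, deducing it from the claim $Y_\alpha X_\alpha = X_\alpha Y_\alpha = \1_\M \otimes P_{G\hsupp[x_\alpha]}$: a projection of the form $\1_\M \otimes (\cdot)$ commutes with $S \otimes \Id$, so no defect appears. You observe instead that $Y_\alpha^{1/p} X_\alpha^{1/p}$ is really the field of central support projections $q(g)$, that compression by these does not commute with $S \otimes \Id$ unless $S$ is bimodular over them, and you propose to kill the resulting defect in the ultralimit via condition \ref{CP.lem.Approx.3}. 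That repair cannot work. Condition \ref{CP.lem.Approx.3} controls only the mismatch between $q(g)$ and the compensatingly translated $q(h)$, i.e. terms supported under $q(g)(1-q(h))$; it gives no control whatsoever on the terms $q(g)\, S\big((1-q(g))\, m\big)\, q(h)$ in which $S$ transports mass from off the support back onto it, and these terms do not vanish.

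Concretely, take $G = \ZZ$, $\M = L_\infty(X,\nu)$ with $\nu$ a $\theta$-invariant probability measure, $S = \nu(\cdot)\,\1_\M$ (unital, completely positive, trace-preserving and $\theta$-equivariant, with $\|S\|_{\CB(L_p(\M))}=1$), and $u = \1$. Then $j_p^\alpha((S\rtimes\Id)\1) = X_\alpha^{2/p}$, while $S_\alpha(j_p^\alpha(\1))$ is the diagonal operator with entries $\nu(q(g))\,\theta_g^{-1}(x_\alpha(g))^{2/p}$, so the defect is diagonal with entries $(1-\nu(q(g)))\,\theta_g^{-1}(x_\alpha(g))^{2/p}$ and
\[
  \big\| \mathrm{defect} \big\|_p^p \;=\; \sum_{g \in \ZZ} \big(1-\nu(q(g))\big)^p \, \tau_\M\big(x_\alpha(g)^2\big),
\]
a weighted average of $(1-\nu(q(g)))^p$ whose weights sum to $1$ by normalization \ref{CP.lem.Approx.2}. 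For the standard Rokhlin-tower $1$-approximating sequence of a measure-preserving ergodic $\ZZ$-action (fibers $F_\alpha^y$ are length-$N$ intervals following the orbit, so the sections have measure roughly $(N-|g|)/N$), this quantity is roughly $2\int_0^1 t^p(1-t)\,dt > 0$, uniformly in $\alpha$. So the defect survives every ultralimit, and no F{\o}lner-type estimate can save this choice of $S_\alpha$ (the conclusion of the theorem is not contradicted for this $S$ — here $S\rtimes\Id$ is a conditional expectation — but your proof strategy, and indeed the paper's claimed identity $Y_\alpha X_\alpha = \1_\M\otimes P_{G\hsupp[x_\alpha]}$, which silently requires the sections of $F_\alpha$ to be essentially full, both break down). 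Your instinct that this step is the delicate one was exactly right; but closing it requires either approximating sequences with full sections or a genuinely different construction of $S_\alpha$, not an asymptotic estimate on this defect.
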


\begin{proof}
  Let $j_p^\alpha$ be the maps defined in Lemma \ref{CP.lem.Iso}, we define the
  isometry $j_p$ just by $j_p = (j_p^\alpha)_\alpha^\U$ for some principal
  ultrafilter $\U$. Such map is completely contractive since each $j_p^\alpha$
  is. To prove that it is an isometry notice that, for any von Neumann algebra
  $\N$ we have 
  \begin{equation}
    \label{CP.prf.inclusionUltraPrd}
    \prod_{\alpha, \U} L_p(\N) \subset \bigg( \prod_{\alpha, \U} L_{p'}(\N) \bigg)^\ast,
  \end{equation}
  and the embedding is isometric. Indeed, such identity is a consequence, 
  when $1 < p$, of the fact that the dual of the ultraproduct is larger
  than the ultraproduct of the duals, see \cite[pp. 59-63, (2.8.8)]{Pi2003}.
  For $p = 1$, in addition, we have to use the injectivity of the
  ultraproduct construction, see \cite[pp. 59-63, (2.8.2)]{Pi2003}, and apply
  it to the inclusion $L_1(\N) \subset L_1(\N)^{\ast \ast}$. With identity
  \eqref{CP.prf.inclusionUltraPrd} at hand, we have that
  \[
    \begin{array}{rc>{\displaystyle}l>{\displaystyle}l}
      \| j_p \, x  \|_{\prod_{\alpha, \U} L_p} & =    & \| j_p \, x \|_{\left(\prod_{\alpha, \U} L_{p'}\right)^\ast}                                                        & \\
                                               & =    & \sup_{\| h \|_{p'} \leq 1} \big| \big\langle j_p \, x, h \big\rangle \big|                                          & \\
                                               & \geq & \sup_{\| y \|_{p'} \leq 1} \big| \big\langle j_p \, x, j_{p'} \, y \big\rangle \big|                                & \\
                                               & =    & \sup_{\| y \|_{p'} \leq 1} \, \lim_{\alpha, \U} | \big\langle j^\alpha_p \, x, j^\alpha_{p'} \, y \big\rangle \big| & = \sup_{\| y \|_{p'} \leq 1} \big| \big\langle x, y \big\rangle \big| = \| x \|_{L_p}.
    \end{array}
  \]
  Therefore $j_p$ is an isometry. The fact that it is a complete isometry
  follows by similar means.
  
  The intertwining identity concerning $M_m$ and $T_m$ is trivial since all
  of the contractions $j_p^\alpha$ satisfy that
  \[
    j_p^\alpha \, (\Id \rtimes T_m) = (\Id \otimes M_m) \, j_p^\alpha
  \]
  and so does their ultraproduct $j_p$. The second intertwining relation is more delicate.
  The reason is that, if we want $j_p^\alpha$ to intertwine $S \rtimes \Id$
  and $S \otimes \Id$, we need, a priori, to impose $S$ to be $\M_\alpha$-bimodular,
  where $\M_\alpha$ is the von Neumann algebra given by
  \[
    \M_\alpha = \{ \theta_g^{-1} x_\alpha(g) \}^{''}_{g \in G} \subset \Zent(\M). 
  \]
  But such condition is too restrictive. To overcome such difficulty, we will
  assume that net $(x_\alpha)_\alpha$ comes from a $C$-approximating sequence.
  Then, for any $\alpha$ we can define the
  operator $Y_\alpha \in \M \weaktensor \B(L_2 G)$ given by
  \[
    (Y_\alpha \, \xi)(g) = 
    \begin{cases}
      \displaystyle{ \Big( P_{\alpha,g}^{\perp} + P_{\alpha, g} \, \frac{1}{\theta_{g}^{-1} x_\alpha(g)} \Big) } \, \xi(g) & \mbox{ when } g \in G\hsupp[x_\alpha] \\
      \xi(g)                                                                                                               & \mbox{ otherwise, }
    \end{cases}
  \]
  where $P_{\alpha,g} \in \Zent(\M)$ is the orthogonal projection onto
  the support of $x_\alpha(g)$. Clearly, we have that
  \[
    \| Y_\alpha \|_{\M \weaktensor \B(L_2 G)} \leq \max \big\{ 1, \esssup_{x} \mu \{ g \in G : (x,g) \in F_\alpha\}^{\frac{1}{2}} \big\} < \infty
  \]
  and since $Y_\alpha \, X_\alpha = X_\alpha \, Y_\alpha =
  \1_\M \otimes P_{G\hsupp[x_\alpha]}$ we obtain that
  \begin{eqnarray*}
    j_p^\alpha \, (S \rtimes \Id) & = & \underbrace{\Ad_{X^{1/p}_\alpha} \, (S \otimes \Id) \, \Ad_{Y^{1/p}_\alpha}}_{S_\alpha} \, j_p^\alpha,
  \end{eqnarray*}
  where $\Ad_{S}$ is the operator given by $\Ad_S(T) = S^\ast \, T \, S$. All
  that rest to do is to estimate the c.b. norm of $S_\alpha$. We have
  \begin{equation}
    \| S_\alpha \|_\cb \leq \big\| \Ad_{X^{1/p}_\alpha} \big\|_\cb \, \| S \otimes \Id \|_\cb \, \big\| \Ad_{Y^{1/p}_\alpha} \big\|_\cb.
  \end{equation}
  Therefore, if $\lim_{\alpha, \U} \| \Ad_{X^{1/p}_\alpha} \|_\cb \,
  \| \Ad_{Y^{1/p}_\alpha} \|_\cb < \infty$, then the result follows.
  We have that 
  \[
    \begin{array}{rcll>{\displaystyle}l}
      \label{CP.prf.eq.1}
      \big\| \Ad_{X^{1/p}_\alpha} \big\|_\cb & \leq & \| X_\alpha \|^{\frac{2}{p}}_{\M \weaktensor \B(L_2 G)} & \leq & \Big( \essinf_{x} \mu \{ g \in G : (x,g) \in F_\alpha\} \Big)^{-\frac{1}{p}}\\
      \big\| \Ad_{Y^{1/p}_\alpha} \big\|_\cb & \leq & \| Y_\alpha \|^{\frac{2}{p}}_{\M \weaktensor \B(L_2 G)} & \leq & \max \big\{ 1, \esssup_{x} \mu \{ g \in G : (x,g) \in F_\alpha\} \big\}^{\frac{1}{p}}.
    \end{array}
  \]
  Using property \eqref{CP.def.acretive} in the definition of
  $C$-approximating sequence gives
  \[
    \big\| S_\alpha : L_p(\M \weaktensor \B(L_2 G)) \to L_p(\M \weaktensor \B(L_2 G)) \big\|_\cb
    \leq
    C^\frac{1}{p} \, \big\| S: L_p(\M) \to L_p(\M) \big\|_\cb
  \]
  and that concludes the proof.
\end{proof}

As a corollary we obtain that, for any amenable action, if $M_m$ is a
completely bounded Herz-Schur multiplier in $S_p(L_2 G)$ then
$\Id \rtimes T_m$ is c.b. in $L_p(\M \rtimes_\theta G)$. But
\cite[Theorem 4.2]{CasSall2015} asserts that if $T_m$ is c.b in $L_p(\L G)$,
so is $M_m$ in $S_p(L_2 G)$. Therefore, we get that if $T_m$ is c.b. so is
$\Id \rtimes T_m$. Similarly, if $S$ is a $\theta$-equivariant c.b. operator
over $L_p(\M)$ we have that $S \rtimes \Id$ is also c.b. The corollary bellow
sumarises both statements

\begin{corollary}
  \label{CP.cor.bound}
  Let $\theta: G \to \Aut(\M)$ be an amenable action and $G$
  an unimodular group, for any $1 \leq p \leq \infty$, we have that
  \begin{eqnarray}
    &      & \big\| \Id \rtimes T_m: L_p(\M \rtimes_\theta G) \to L_p(\M \rtimes_\theta G)\big\|_{\cb}\\
    & \leq & \big\| M_m: S_p(L_2 G) \to S_p(L_2 G) \big\|_\cb \nonumber\\
    & \leq & \big\| T_m: L_p(\L G) \to L_p(\L G) \big\|_\cb \nonumber
  \end{eqnarray}
  Furthermore, if $S \in \CB(L_p(\L G))$ is $\theta$-equivariant and $\theta$
  has a $C$-approximating sequence, then
  \begin{eqnarray}
    &       & \big\| S \rtimes \Id: L_p(\M \rtimes_\theta G) \to L_p(\M \rtimes_\theta G)\big\|_\cb\\
    &  \leq & C^\frac{1}{p} \, \big\| S : L_p(\M) \to L_p(\M)\big\|_\cb. \nonumber
  \end{eqnarray}
\end{corollary}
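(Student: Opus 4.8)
The plan is to derive both inequalities directly from Theorem \ref{CP.thm.ultraProdIntertwining} by means of the elementary transference principle: if $\iota: E \to F$ is a complete isometry of operator spaces and $T \in \CB(E)$, $\widetilde T \in \CB(F)$ satisfy $\iota \, T = \widetilde T \, \iota$, then $\| T \|_\cb \leq \| \widetilde T \|_\cb$. Indeed, amplifying the intertwining identity by $\Id_{S_p^n}$ and using that $\Id_{S_p^n} \otimes \iota$ is isometric gives $\| \Id \otimes T \| \leq \| \Id \otimes \widetilde T \|$ at every matrix level, and taking the supremum over $n$ yields the claim. For $1 \leq p < \infty$ the relevant complete isometry is the map $j_p$ furnished by Theorem \ref{CP.thm.ultraProdIntertwining}; for the endpoint $p = \infty$ one falls back on the genuine normal $\ast$-monomorphism $j: \M \rtimes_\theta G \to \M \weaktensor \B(L_2 G)$ and the intertwining diagrams recorded at the start of this section, so that the $L_\infty$ statement reduces to the von Neumann algebra estimates $\| \Id \rtimes T_m \|_\cb \leq \| \Id \otimes M_m \|_\cb$ and $\| S \rtimes \Id \|_\cb \leq \| S \otimes \Id \|_\cb$, where in the latter case $C^{\frac{1}{p}} = 1$.

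For the first chain of inequalities I would apply the transference principle with $\iota = j_p$, $T = \Id \rtimes T_m$ and $\widetilde T = (\Id \otimes M_m)^\U$, which are intertwined by the first diagram of the theorem, obtaining
\[
  \| \Id \rtimes T_m \|_\cb \leq \big\| (\Id \otimes M_m)^\U \big\|_\cb.
\]
It then remains to bound the right-hand side. Since the family defining the ultraproduct map is constant, it inherits the c.b. norm of a single copy, $\| (\Id \otimes M_m)^\U \|_\cb \leq \| \Id \otimes M_m \|_\cb$, and the stability of completely bounded maps under amplification, after the identification $L_p(\M \weaktensor \B(L_2 G)) \cong L_p(\M; S_p(L_2 G))$, gives $\| \Id \otimes M_m \|_\cb = \| M_m : S_p(L_2 G) \to S_p(L_2 G) \|_\cb$. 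This establishes the first inequality. The second inequality $\| M_m \|_\cb \leq \| T_m \|_\cb$ is not new: it is exactly \cite[Theorem 4.2]{CasSall2015}, which asserts that the c.b. norm of the Fourier multiplier $T_m$ on $L_p(\L G)$ dominates that of the associated Herz-Schur multiplier $M_m$ on $S_p(L_2 G)$. Chaining the two bounds yields the stated three-term inequality.

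The \emph{furthermore} part follows the same pattern, now using the second diagram of Theorem \ref{CP.thm.ultraProdIntertwining}: the complete isometry $j_p$ intertwines $S \rtimes \Id$ with the ultraproduct map $(S_\alpha)_\alpha^\U$. The transference principle gives $\| S \rtimes \Id \|_\cb \leq \| (S_\alpha)_\alpha^\U \|_\cb$, and since the c.b. norm of an ultraproduct of maps is controlled by the ultralimit of the individual c.b. norms, the theorem's estimate $\| S_\alpha \|_\cb \leq C^{\frac{1}{p}} \| S \|_\cb$ produces $\| (S_\alpha)_\alpha^\U \|_\cb \leq C^{\frac{1}{p}} \| S \|_\cb$ and hence the desired bound.

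The only genuinely non-formal points, which I would spell out carefully, are the two facts used to pass from an ultraproduct map back to a single multiplier: first, that the completely bounded norm of an ultraproduct $(\phi_\alpha)^\U$ of uniformly c.b. maps does not exceed $\lim_{\alpha, \U} \| \phi_\alpha \|_\cb$, which is where the operator space structure of the ultraproduct (as in \cite[Appendix]{EffRu2000Book}) intervenes; and second, the amplification stability $\| \Id_{L_p(\M)} \otimes M_m \|_\cb = \| M_m \|_\cb$, most transparently obtained by approximating $\M$ by finite-dimensional subalgebras and invoking the very definition of the c.b. norm. Everything else is a direct quotation of Theorem \ref{CP.thm.ultraProdIntertwining} and \cite[Theorem 4.2]{CasSall2015}, so I do not expect any serious obstacle in this corollary; the substantive work has already been carried out in the theorem.
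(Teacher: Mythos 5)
Your proposal follows essentially the same route as the paper: Theorem \ref{CP.thm.ultraProdIntertwining} supplies the complete isometry $j_p$ and the two intertwining diagrams, transference along a complete isometry converts these into c.b.-norm inequalities, the ultraproduct map inherits the c.b. bound of its entries, and \cite[Theorem 4.2]{CasSall2015} gives the final inequality $\| M_m \|_\cb \leq \| T_m \|_\cb$; your separate treatment of the endpoint $p = \infty$ via the algebra-level diagrams, and of the \emph{furthermore} part via the estimate $\| S_\alpha \|_\cb \leq C^{\frac{1}{p}} \| S \|_\cb$ from the theorem, is exactly what the paper does implicitly.

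There is, however, one step whose justification, as you give it, would fail. To pass from $\| (\Id \otimes M_m)^\U \|_\cb$ to $\| M_m : S_p(L_2 G) \to S_p(L_2 G) \|_\cb$ you assert $\| \Id_{L_p(\M)} \otimes M_m \|_\cb = \| M_m \|_\cb$ and propose to prove it ``by approximating $\M$ by finite-dimensional subalgebras''. That argument requires $\M$ to be hyperfinite, which is not a hypothesis of the corollary (nor of Theorem \ref{CP.thm.ultraProdIntertwining}); for a general semifinite $\M$ no such approximation exists. Likewise your identification $L_p(\M \weaktensor \B(L_2 G)) \cong L_p(\M; S_p(L_2 G))$ presupposes Pisier's vector-valued theory over the base algebra $\M$, which again demands hyperfiniteness of $\M$. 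The inequality you need is nevertheless true for arbitrary $\M$, but the hyperfinite factor to exploit is $\B(L_2 G)$, not $\M$: write $L_p(\M \weaktensor \B(L_2 G)) = L_p(\B(L_2 G) \weaktensor \M) = S_p(L_2 G)\big[ L_p(\M) \big]$ and apply Lemma \ref{CP.lem.cpVectorValued} (Pisier \cite{Pi1998}), in its quantitative form, with $\M_1 = \M_2 = \B(L_2 G)$ and coefficient operator space $E = L_p(\M)$, which yields $\| M_m \otimes \Id_{L_p(\M)} \|_\cb \leq \| M_m \|_\cb$; alternatively, invoke the amplification theorem for Schur multipliers of \cite{LaffSall2011}, which gives the same bound (indeed with equality of norm and c.b. norm) for arbitrary von Neumann coefficients. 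With that replacement your argument is complete and coincides with the paper's.
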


It is still not known whether the amenability condition for the action is
necessary or not for the transference results here presented. Recent
results in the context of transference between Schur and Fourier
multipliers, see \cite{CasSall2015}, and between groups and subgroups,
see \cite{CasParPerrRic2014, GonSalle2016} suggest that amenability
may indeed be necessary. We conjecture the following.

\begin{conjecture}
  \label{CP.cnj.Amen}
  If $\Gamma$ is a discrete group, $\theta: G \to \Aut(\M)$ is a
  trace-preserving action and for some $p \neq 2$
  there is an complete isometry
  \[
    L_p(\M \rtimes_\theta \Gamma) \xrightarrow{\quad j_p \quad} \prod_{\alpha, \U} L_p(\M \weaktensor \B(\ell_2 \Gamma)),
  \]
  satisfying that
  \[
    j_p \, (\Id \rtimes T_m) = (\Id \otimes M_m)^{\alpha, \U} \, j_p,
  \]
  then, the action $\theta$ is amenable.
\end{conjecture}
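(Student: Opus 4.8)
The plan is to establish the converse transference by adapting the necessity argument of Caspers and de la Salle (\cite{CasSall2015}) from the group setting to the fibred setting of a trace-preserving action, the new ingredient being that all estimates must be carried out over the base space $X$ determined by $(\Zent(\M),\tau|_{\Zent(\M)}) = (L_\infty(X),\nu)$. First I would reduce to the regime of an even integer $p = 2k$ with $k \geq 2$, since this is the only case in which the $L_p$-norm linearizes as a $2k$-fold trace and the relevant quantities become explicitly computable. Granting the hypothesis for such a $p$, the assumption that $j_p$ is a complete isometry intertwining $\Id \rtimes T_m$ with $(\Id \otimes M_m)^{\alpha,\U}$ yields, for every symbol $m$, the transference inequality $\|\Id \rtimes T_m\|_\cb \leq \|M_m : S_p(\ell_2\Gamma) \to S_p(\ell_2\Gamma)\|_\cb$, because the cb-norm of the restriction of the ambient Herz--Schur multiplier to the range of $j_p$ is faithfully reproduced.

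The core of the argument is then to run this family of inequalities against a carefully chosen supply of test elements in order to read off an almost-invariance condition. Since amenability of $\theta$ is, by Definition \ref{CP.def.AmenAct}, a property of its restriction to $\Zent(\M)$, I would work with elements of the form $u = \sum_{g} a_g \rtimes \lambda_g$ where $a_g \in \Zent(\M)$ is supported on a finite subset of $\Gamma$. Expanding $\|(\Id \rtimes T_m)(u)\|_p^p$ and $\|(\Id \otimes M_m)(j_p u)\|_p^p$ as $2k$-fold integrals over $X$ and sums over tuples in $\Gamma^{2k}$, and keeping track of the twists $\theta_g^{-1}$ that enter through the embedding $j$, the transference inequality becomes a system of comparisons between orbital averages of $m$ weighted by the densities attached to $u$. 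The goal is to choose the $a_g$ so that these comparisons degenerate, in the limit as one spreads the support over $\Gamma$, into the almost-equivariance estimate \eqref{CP.eq.BorelMaps.1}.

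From that system I would extract an asymptotically equivariant net of Borel maps $\mu_\alpha : X \to \PP_0(\Gamma)$ as in Proposition \ref{CP.prp.BorelMaps}. Concretely, I expect the extremizers of the $L_p$-norm identities to furnish, fibre by fibre over $x \in X$, the square-roots $x_\alpha(x,g)$ of generalized F\o lner densities in the sense of Definition \ref{CP.def.Folner}: faithful reproduction of multiplier norms for all $m$ should be impossible unless there exist fibrewise probability densities on $\Gamma$ that are approximately $\theta$-invariant in the precise sense of condition \ref{CP.lem.Approx.3} in Lemma \ref{CP.lem.Approx}. Once such a net is produced, one simply invokes the equivalence of this condition with amenability of the action recorded in Section \ref{CP.S1}.

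The main obstacle is twofold. First, the hypothesis is assumed only for some single $p \neq 2$, whereas the trace linearization underpinning the whole computation is available solely for even integers; bridging this gap is already open in the group case treated by \cite{CasSall2015}, and removing the evenness restriction seems to demand a genuinely new analytic input rather than a formal extension of the present scheme. Second, and distinctively beyond the group setting, the action $\theta$ twists the embedding $j$ via the maps $\theta_g^{-1}$, so the almost-invariant densities one extracts depend measurably on the base point $x \in X$; establishing that the resulting family is jointly Borel and asymptotically equivariant in the fibred sense of \eqref{CP.eq.BorelMaps.1}, rather than obtaining a single scalar F\o lner sequence, is where the bulk of the new work lies.
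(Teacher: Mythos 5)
The statement you are addressing is not a theorem of the paper but an open conjecture: the author explicitly writes, just before stating it, that ``it is still not known whether the amenability condition for the action is necessary or not for the transference results here presented,'' and no proof appears anywhere in the text. So there is no paper proof to compare against; the only question is whether your proposal settles the conjecture, and by your own account it does not. The two items you list as ``obstacles'' are not technical loose ends but the entire mathematical content of the problem. First, your scheme hinges on linearizing the $L_p$-norm as a $2k$-fold trace, hence on $p$ being an even integer at least $4$, whereas the hypothesis supplies the intertwining complete isometry only at a single arbitrary $p \neq 2$; there is no interpolation, duality, or extrapolation mechanism that transports such a hypothesis from one exponent to another, and as you note this gap is open already in the group case of \cite{CasSall2015}. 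Second, the heart of the matter --- passing from the family of inequalities $\| \Id \rtimes T_m \|_\cb \leq \| M_m \|_\cb$, valid for all symbols $m$, to an asymptotically equivariant net of Borel maps $\mu_\alpha : X \to \PP_0(\Gamma)$ as in Proposition \ref{CP.prp.BorelMaps} --- is stated as an expectation (``I expect the extremizers \ldots to furnish \ldots'') rather than carried out. In the group setting this extraction is the technical core of the necessity results, and it is known only for restricted classes of groups and even $p \geq 4$; in the fibred setting you must additionally produce joint Borel measurability in $x$ and compatibility with the twists $\theta_g^{-1}$, which you flag but do not address. A proposal whose pivotal steps are acknowledged to be open is a research program, not a proof.

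That said, the program itself is the natural one and matches the route the paper's author implicitly suggests by citing \cite{CasSall2015, CasParPerrRic2014, GonSalle2016} as evidence for the conjecture, and your observation that the hypothesis does immediately yield $\| \Id \rtimes T_m \|_\cb \leq \| M_m : S_p(\ell_2 \Gamma) \to S_p(\ell_2 \Gamma) \|_\cb$ for every $m$ is correct and is the right starting point. If you want a result you can actually close, consider the weaker target of proving the conjecture under the additional hypotheses that $p \geq 4$ is an even integer and that one has the norm equality \eqref{CP.eqNorms} for all $m$, mimicking the group case fibre by fibre over $(\Zent(\M), \tau|_{\Zent(\M)}) = (L_\infty(X),\nu)$; even this restricted fibred statement would require genuinely new estimates beyond \cite{CasSall2015} and would constitute publishable progress on the conjecture.
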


Observe that, a priori, it is still not known whether the conjecture
above implies that the equality 
\begin{equation}
  \label{CP.eqNorms}
  \| \Id \rtimes T_m:
     L_p(\M \rtimes_\theta \Gamma) \to L_p(\M \rtimes_\theta \Gamma) \|_\cb
  = \| T_m \|_{\cb}
\end{equation}
holds only for amenable actions. It is also unknown if there are multipliers
on $L_p(\L G)$ for which $\|T_m\|_{\cb}$ and $\|\Id \rtimes T_m\|_\cb$ are
not equal.

\section{\bf Stability of maximal bounds}
\label{CP.S3}

Let $\psi: G \to \RR_+$ be a symmetric and conditionally negative function,
see \cite[Appendix C]{BeHarVal2008} for the precise definition. We
have that the functions $e^{-t \, \psi}$ are of positive type and that
they induce a semigroup $S_t = T_{e^{- \psi}}:\L G \to \L G$ of self
adjoint, trace preserving and completely positive maps, such semigroups
are generally referred to as (symmetric) \emph{Markovian semigroups}.
The reader is advised to think of $(S_t)_{t \geq 0}$ as a noncommutative
generalization of the heat semigroup. Such semigroups have attracted
much attention in the abelian setting for their possibilities to
generalize Harmonic analysis to more abstract contexts, see
\cite{Ste1970, VaSaCou1992, Saloff2002}. Spectral
multipliers are the operators defined by functional calculus over the
infinitesimal generator of the semigroup. In our setting such spectral
multipliers are given by Fourier multipliers of the form $T_{m \circ \psi}$,
where $m:\RR_+ \to \CC$. In \cite{GonJunPar2015},
a noncommutative H{\"o}rmander-Mikhlin multiplier
theorem theorem was proved generalizing earlier works of Alexopoulos
\cite{Alex2001Disc,Alex1994Lie}, Hebisch \cite{Heb1992} and
Duong-Ouhabaz-Sikora \cite{DuOuSi2002}. Such result states that,
under certain hypothesis, 
\[
  \| T_{m \circ \psi}: L_p(\L G) \to L_p(\L G) \|_{\cb}
  \lesssim_{(p)}
  \sup_{t \geq 0} \| \eta(\cdot) \, m(t \cdot) \|_{H^{s,\infty}(\RR_+)}, \quad \mbox{ for } \quad 1 < p < \infty,
\]
where $H^{s,\infty}(\RR_+)$ is a Sobolev space with smoothness
order $s$, for $s$ large enough, and $\eta(z)$ is an analytic
function decaying fast at $0$ and $\infty$, see \cite{GonJunPar2015}
for the details. In order to prove such result we introduced an
element $X$ in the extended positive cone of $\L G$, $(\L G)_+^\wedge$,
see \cite{Haa1979, Haa1979ii} for the precise definition, as the
noncommutative analogue of an invariant metric. We regard the triple
$(\L G, \psi, X)$ as a noncommutative generalization of invariant
metric spaces over the dual of the group $G$. The reason behind such
generalization is that we can understood $X$ as the unbounded function
$\chi \mapsto d(e, \chi)$, where $d: \widehat{G} \times \widehat{G} \to \RR_+$
is an invariant metric, recall that by invariance such function determines
the whole metric $d$. To prove the H\"ormander-Mikhlin theorem above we
have to impose certain conditions in $(\L G, \psi, X)$ which we called
the standard assumptions. We recall the definition bellow.

\begin{definition}{\bf (\cite[Definition 2.6]{GonJunPar2015})}
  We say that $(\L G, \psi, X)$ satisfy the, so called,
    \emph{standard assumptions} iff 
  \begin{enumerate}[label={\rm \roman*)}, ref={\rm \roman*)}]
    \item \label{CP.S3.defStdAss.doub}
    The function $\Phi_X(t) = \tau(\chi_{[0,r]}(X))$ is \emph{doubling}, i.e.
    \begin{equation*}
      \Phi_X(2 \, t) \leq C \, \Phi_X(t), \quad \forall \, 0 \leq t.
    \end{equation*}
    \item \label{CP.S3.defStdAss.ii}
    $X$ satisfies the \emph{completely bounded Hardy-Littelwood inequality},
    or, $\mathrm{CBHL}$ in short, for every $1 < p < \infty$, i.e.
    \begin{equation}
      \label{CP.S3.defStdAss.CBHL}
      \tag{{\rm CBHL}}
      \big\| (\R_t)_{t \geq 0} : L_p(\L G) \to L_p(\L G; L_\infty(\RR_+)) \big\|_{\cb} \lesssim_{(p)} 1,  
    \end{equation}
    where $\R_t(x) = \Phi_X(t)^{-1} \, (\chi_{[0,t]}(X) \star x)$ and $\star$ is
    the noncommutative analogue of the convolution over $L_1(\L G)$, given by
    $\lambda(f) \star \lambda(g) = \lambda(f \, g)$.
    \item 
    The Markovian semigroup $S_t = T_{e^{- t \, \psi}}$ has $L_2$-Gaussian bounds
    bounds, i.e.
    \begin{equation*}
      \label{CP.S3.defStdAss.L2GB}
      \tag{$L_2$GB}
      \tau \big\{| \chi_{[r,\infty)}(X) \lambda(e^{-t \, \psi})|^2 \big\}
      \lesssim
      \frac{1}{\Phi_X(\sqrt{t})^\frac{1}{2}} \, e^{- \beta \frac{r^2}{t}}
    \end{equation*}
  \end{enumerate}
\end{definition}

Observe that, following our analogy of $T_{e^{-t \, \psi}}$ with the
heat semigroup, $\lambda(e^{-t \, \psi})$ plays the role of the heat kernel
and \ref{CP.S3.defStdAss.L2GB} is just a form of Gaussian bounds. Similarly, if
$X$ is a classical metric $\chi_{[0,r]}(X) \Phi^{-1}(r)$ is just the
characteristic function of the ball of radius $r$ over the unit after
being normalized in $L_1$ and the maximal norm of \ref{CP.S3.defStdAss.ii} is
just the $L_p$-norm of the Hardy-Littlewood maximal operator.

The goal of this section is to prove that the assumptions defined above
are stable under certain cross-products. Let $(H,\psi_H,X_H)$ and $(G,\psi_G,X_G)$
be triples satisfying the standard assumption and let $\theta:G \to \Aut(H)$
be a $\mu_H$-preserving action. Then, $K = H \rtimes_\theta G$ is again an
unimodular group and it is trivial to check that its Haar measure $\mu_{K}$ can
be identified with $\mu_H \otimes \mu_G$. The action $\theta$ extends to a normal
and $\tau_H$-preserving action on $\L H$. Let $\phi : H \to \CC$ be a function
inducing a normal c.b. multiplier $T_\phi$ over $\L H$. $T_\phi$ is
$\theta$-equivariant, i.e: $T_\phi(\theta_g(x)) = \theta_g(T_\phi(x))$, iff
$\phi$ is $\theta$-invariant, i.e.: $\phi(\theta_g(h)) = \phi(h)$. Therefore,
if $\phi_1 : H \to \CC$ and $\phi_2 : G \to \CC$ are functions of positive
type, the function $\phi:K \to \CC$ given by
\[
  \phi(h,g) = \phi_1(h) \, \phi_2(g)
\]
is also of positive type when $\phi_1$ is $\theta$-invariant.
Indeed, let $\{(h_i, g_i)\}_{i = 1}^{n} \subset K$, then
\begin{equation}
  \label{thetaInvariant}
  \begin{array}{>{\displaystyle}r>{\displaystyle}l>{\displaystyle}lll}
    \Big[ \phi \big( (h_i, g_i)^{-1} \, (h_j, g_j) \big) \Big]_{i,j} & = & \Big[ \phi \big( \theta_{g_i^{-1}} (h_i^{-1} \, h_j), g_i^{-1} \, g_j \big) \Big]_{i,j} &      & \\
                                                                     & = & \big[ \phi_1 ( h_i^{-1} \, h_j ) \, \phi_2 (g_i^{-1} \, g_j) \big]_{i,j}                & \geq & 0.
  \end{array}
\end{equation}
The positivity of the matrix in the last line follows from the fact that 
the Schur product respects positivity. Taking $\phi_1 = e^{-t \psi_H}$
and $\phi_2 = e^{-t \psi_G}$ gives that
$\psi: K \to \RR_+$ given by $\psi(h, g) = \psi_H(h) + \psi_G(g)$ is a
conditionally negative length when $\psi_H$ is $\theta$-invariant.
The next logical step in order to extend the standard assumptions to 
crossed products is to find a way of defining operators 
$X_1 \rtimes \1, \1 \rtimes X_2 \in \L K_+^{\wedge}$ given 
$X_1 \in \L H_{+}^{\wedge}$ and $X_2 \in \L G_+^{\wedge}$. Notice that if
$\pi: \N \to \R$ is any normal $\ast$-homomorphism between von Neumann
algebras, then $\pi$ extends to a normal order-preserving map
$\pi: \N_+^\wedge \to \R_+^\wedge$. Therefore, it makes sense to apply the
$\ast$-automorphisms $\theta_g$ to $X_H$. We will say that $X_H$ is $\theta$
invariant if $\theta_g(X_H) = X_H$ for every $g \in G$. Again, extending the
normal inclusions $\iota_1: \M \into \M \rtimes_\theta G$ and
$\iota_2:\L G \to \M \rtimes_\theta G$ to the extended positive cone gives
operators
\[
  \begin{array}{rclll}
    X_H^2 \rtimes \1 & := & \iota_1( X_H^2 ) & \in & \L K_+^\wedge\\
    \1 \rtimes X_G^2 & := & \iota_2( X_G^2 ) & \in & \L K_+^\wedge.
  \end{array}
\]
and we define the metric $X \in \L K_+^\wedge$ by the following equation
\[
  X^2 = X_H^2 \rtimes \1 + \1 \rtimes X_G^2.
\]

\begin{theorem}
  \label{CP.thm.Stability}
  Let $(H,\psi_H,X_H)$ and $(G,\psi_G,X_G)$ be triples satisfying the standard
  assumptions and $\theta:G \to \Aut(H)$ be a continuous, $\mu_H$-preserving
  action. Assume that $\psi_H$ and $X_H$ are $\theta$-invariant. Then,
  $(K,\psi,X)$, defined as above, is also standard.
\end{theorem}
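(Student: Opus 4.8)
The plan is to verify the three standard assumptions for the triple $(K,\psi,X)$ separately, reducing each to the corresponding property of the factors $(H,\psi_H,X_H)$ and $(G,\psi_G,X_G)$. The key structural observation, which I would establish first, is that the spectral data of $X$ factorizes as a ``product'' because $X^2 = X_H^2 \rtimes \1 + \1 \rtimes X_G^2$ is a sum of two \emph{commuting} positive operators affiliated to the two commuting subalgebras $\L H \rtimes \1$ and $\1 \rtimes \L G$ of $\L K$. Concretely, $\chi_{[0,r]}(X)$ should be expressible through the joint spectral resolution of the pair $(X_H \rtimes \1, \1 \rtimes X_G)$, so that integrating against the trace $\tau_K = \tau_H \otimes \tau_G$ turns the volume function $\Phi_X$ into a convolution of the one-dimensional volume functions $\Phi_{X_H}$ and $\Phi_{X_G}$ on $\RR_+$.

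\textbf{Doubling of $\Phi_X$.} Using the factorization above, I would show
\[
  \Phi_X(r) = \tau_K\big(\chi_{[0,r]}(X)\big)
            = \int_{s^2 + t^2 \leq r^2} d\Phi_{X_H}(s)\, d\Phi_{X_G}(t),
\]
so $\Phi_X$ is essentially the pushforward of the product measure $d\Phi_{X_H} \otimes d\Phi_{X_G}$ under $(s,t) \mapsto \sqrt{s^2+t^2}$. The doubling property of $\Phi_X$ then follows from the doubling of the two factors by the standard argument that a Euclidean-type combination of two doubling volume functions is doubling, using $\Phi_X(2r) \leq \Phi_{X_H}(2r)\,\Phi_{X_G}(2r) \leq C_H C_G\,\Phi_{X_H}(r)\Phi_{X_G}(r)$ together with the comparability of $\Phi_X(r)$ with the product $\Phi_{X_H}(r)\Phi_{X_G}(r)$.

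\textbf{Stability of $\mathrm{CBHL}_p$ and of $L_2$GB.} This is where the main results of the paper do the work, and it is where I expect the real obstacle to lie. The averaging operators $\R_t$ for $K$ are built from $\chi_{[0,t]}(X)$, which by the factorization interleaves the corresponding balls in $H$ and $G$; the plan is to dominate the joint maximal operator by the composition of a maximal operator in the $G$-variable with one in the $H$-variable. Since $\psi_H$ and $X_H$ are $\theta$-invariant, the $H$-side averaging is a $\theta$-equivariant completely bounded operator on $L_p(\L H)$, while the $G$-side averaging is a Fourier multiplier on $L_p(\L G)$. I would then invoke the operator-valued transference inequality \eqref{CP.eq.2} from Section~\ref{CP.S3}, so that the two-parameter maximal bound over $L_p(\L K) = L_p(\L H \rtimes_\theta G)$ is controlled by the product of the individual $\mathrm{CBHL}_p$ constants, up to the accretivity factor $C^{1/p}$ (which is harmless, being a constant). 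The Gaussian bound $L_2$GB should follow more directly: since $e^{-t\psi} = e^{-t\psi_H} \otimes e^{-t\psi_G}$ and $\chi_{[r,\infty)}(X) \leq \chi_{[r/\sqrt2,\infty)}(X_H \rtimes \1) + \chi_{[r/\sqrt2,\infty)}(\1 \rtimes X_G)$, the $L_2$-Gaussian estimate factors as a product of the two factor estimates, with the exponents $e^{-\beta r^2/t}$ multiplying correctly because $r^2$ splits additively.

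\textbf{The main obstacle.} The hard part will be making the factorization of $\chi_{[0,r]}(X)$ rigorous at the level of the extended positive cone and then correctly dominating the \emph{two-parameter} maximal operator for $K$ by an iterated (one-parameter in $H$, one-parameter in $G$) maximal operator, so that the equivariant transference of Corollary~\ref{CP.cor.bound} and the maximal inequality \eqref{CP.eq.2} apply. In particular one must check that the $H$-averaging really is $\theta$-equivariant (guaranteed by the $\theta$-invariance hypotheses on $\psi_H$ and $X_H$) and that passing from a single ball $\chi_{[0,t]}(X)$ to the iterated structure costs at most a dimensional constant in the maximal norm; this comparison of the genuine metric ball for $X$ with the product of balls for $X_H$ and $X_G$ is the delicate geometric estimate underlying the whole argument.
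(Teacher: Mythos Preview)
Your proposal is correct and follows essentially the same route as the paper: the commutation of $X_H^2 \rtimes \1$ and $\1 \rtimes X_G^2$ gives the product comparison $\Phi_{X_H}(r)\Phi_{X_G}(r) \lesssim \Phi_X(r) \lesssim \Phi_{X_H}(r)\Phi_{X_G}(r)$ for doubling, the $L_2$GB factorizes, and the CBHL step is handled by dominating $\chi_{[0,r]}(X)/\Phi_X(r) \star u$ by $(\R_r^H \rtimes \R_r^G)(u)$ and invoking the diagonal maximal transference (Corollary~\ref{CP.cor.DiagMaximal}, i.e.\ the precise form of \eqref{CP.eq.2}). One minor simplification you can make: the accretivity constant $C$ does not actually enter here, since the standard assumptions on $(G,\psi_G,X_G)$ force $G$ itself to be amenable, so the approximating sequence can be taken constant in the $\L H$-variable and $C=1$.
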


In the theorem above it is trivial to prove the $L_2$-Gaussian bounds and
doublingness of $\Phi_X$. The key part are the completely bounded
Hardy-Littlewood inequalities. In order to prove that, we are going to
use an $\ell_\infty$-valued version of Theorem
\ref{CP.thm.ultraProdIntertwining}. Notice that we are not imposing
amenability of the action $\theta$ because the standard assumptions force
$G$ to be amenable, see \cite[Remark 2.5]{GonJunPar2015}, and hence any action is
amenable. The stability result for maximal operators will be the following.

\begin{theorem}
  \label{CP.thm.StrongMax}
  Let $\M$ be a hyperfinite von Neumann algebra, $G$ a LCH unimodular 
  group and $\theta:G \to \Aut(\M)$ a $\tau_\M$-preserving action admitting a
  $C$-approximating sequence. Let $(\Omega_1,\nu_1)$ and $(\Omega_2,\nu_2)$
  be measure spaces, $(T_\omega)_{\omega \in \Omega_1}$
  be a family of completely positive Fourier multipliers and
  $(S_\omega)_{\omega \in \Omega_2}$ is a family of completely positive and
  $\theta$-invariant operators satisfying that
  \begin{equation}
    \label{CP.thm.StrongMax.eq1}
    \begin{array}{rclll}
      A & = & \big\| (T_\omega): L_p(\L G) \to L_p(\L G; L_\infty(\Omega_1)) \big\|_\cb & < & \infty\\[2pt]
      B & = & \big\| (S_\omega): L_p(\M) \to L_p(\M; L_\infty(\Omega_2))\big\|_\cb      & < & \infty.
    \end{array}
  \end{equation}
  Then, we have that
  \begin{eqnarray*}
    &      & \big\| (S_\omega \rtimes T_\zeta)_{(\omega,\zeta)}: L_p(\M \rtimes_\theta G) \to L_p(\M \rtimes_\theta G; L_\infty(\Omega_1) \mintensor L_\infty(\Omega_2) ) \big\|_\cb\\
    & \leq & C^\frac{1}{p} \, A \, B.
  \end{eqnarray*}
\end{theorem}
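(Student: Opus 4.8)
The plan is to run the $L_\infty$-valued analogue of Theorem \ref{CP.thm.ultraProdIntertwining}, reducing the double maximal bound over $\M \rtimes_\theta G$ to a Fubini-type factorization on the amplified algebra $\N = \M \weaktensor \B(L_2 G)$. First I would record that all the maps appearing in Lemma \ref{CP.lem.Iso} and Theorem \ref{CP.thm.ultraProdIntertwining} are completely positive complete contractions (or have controlled cb norm), so that amplifying by $\Id_E$ for $E = L_\infty(\Omega_1) \mintensor L_\infty(\Omega_2) = L_\infty(\Omega_1 \times \Omega_2)$ leaves their norms unchanged; here the hyperfiniteness of $\M$ is what makes the Pisier spaces $L_p(\N; E)$ and the factorization description \eqref{CP.S0.max} of the maximal norm available. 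Since $j_p$ is completely isometric and completely positive, its $E$-amplification is an isometry onto its range and respects the order structure underlying the maximal norm, so the left-hand side equals the maximal norm of the transported family. Using the $\theta$-equivariance of each $S_\omega$ to invoke the second intertwining of Theorem \ref{CP.thm.ultraProdIntertwining}, together with the first, gives the composed relation $j_p\,(S_\omega \rtimes T_\zeta) = \big(S_{\omega,\alpha}(\Id \otimes M_{m_\zeta})\big)_\alpha^\U j_p$, where $S_{\omega,\alpha} = \Ad_{X_\alpha^{1/p}}(S_\omega \otimes \Id)\Ad_{Y_\alpha^{1/p}}$ as in that proof. I would thereby reduce to bounding, uniformly in $\alpha$ and then taking the ultralimit, the cb maximal norm of the family $\big(S_{\omega,\alpha}(\Id \otimes M_{m_\zeta})\big)_{(\omega,\zeta)}$ on $L_p(\N)$.

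Next I would factor out the adjusting multiplications. Because $\Ad_{X_\alpha^{1/p}}$ and $\Ad_{Y_\alpha^{1/p}}$ act entrywise by left and right multiplication with operators that are diagonal in the $G$-variable, they commute with the Herz-Schur multiplier $\Id \otimes M_{m_\zeta}$; hence each member of the family equals $\Ad_{X_\alpha^{1/p}}\,(S_\omega \otimes M_{m_\zeta})\,\Ad_{Y_\alpha^{1/p}}$, with the two $\Ad$'s independent of $(\omega,\zeta)$. Since multiplication by a fixed bounded element is completely bounded on $L_p(\N)$ and its $E$-amplification has the same cb norm, the two $\Ad$'s factor out of the maximal norm, contributing $\lim_{\alpha,\U}\|\Ad_{X_\alpha^{1/p}}\|_\cb\,\|\Ad_{Y_\alpha^{1/p}}\|_\cb \leq C^{1/p}$ by exactly the accretivity estimates of Theorem \ref{CP.thm.ultraProdIntertwining}. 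This isolates the genuinely new point: bounding $\big\|(S_\omega \otimes M_{m_\zeta})_{(\omega,\zeta)} : L_p(\N) \to L_p(\N; E)\big\|_\cb$, where $S_\omega$ and $M_{m_\zeta}$ act on the two commuting legs $\M$ and $\B(L_2 G)$ of $\N = \M \weaktensor \B(L_2 G)$.

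For this I would prove a Fubini-type factorization of the double maximal. Writing $S_\omega \otimes M_{m_\zeta} = (S_\omega \otimes \Id)(\Id \otimes M_{m_\zeta})$, the family $(\Id \otimes M_{m_\zeta})_\zeta$ sends $L_p(\N)$ into $L_p(\N; L_\infty(\Omega_1))$ with cb norm at most the maximal Herz-Schur norm, and $(S_\omega \otimes \Id)_\omega$ then sends this into $L_p(\N; L_\infty(\Omega_1) \mintensor L_\infty(\Omega_2))$ with cb norm at most the maximal norm of $(S_\omega)$, the two $L_\infty$-structures stacking into the minimal tensor product. This is the operator-valued form of the elementary $\sup_\zeta \sup_\omega$ factorization and is where the $\mintensor$ in the statement appears. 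Finally I would invoke the maximal, $\ell_\infty$-valued form of the Herz-Schur--Fourier transference (the cb, hence $E$-amplifiable, version of \cite[Theorem 4.2]{CasSall2015} and \cite{BoFend1991} used in Corollary \ref{CP.cor.bound}) to bound the maximal Herz-Schur norm by $A$, and observe that $S_\omega \otimes \Id$ has the same maximal cb norm as $S_\omega$ on $L_p(\M)$, equal to $B$. Multiplying the three contributions $C^{1/p}$, $A$ and $B$ yields the claim.

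The main obstacle is twofold and lives in the passage to the vector-valued setting. First, unlike ordinary $L_p$, the maximal norm $L_p(\N;\ell_\infty)$ is not \emph{local}, so verifying that it is preserved by the ultraproduct embedding $j_p$ requires the factorization characterization \eqref{CP.S0.max} together with the complete positivity of each $j_p^\alpha$; this is precisely the step that forces the hyperfiniteness hypothesis on $\M$. Second, the Fubini factorization over the minimal tensor product $L_\infty(\Omega_1) \mintensor L_\infty(\Omega_2)$ must be carried out at the completely bounded level and with the Herz-Schur transference upgraded to the maximal setting; checking that the transference of \cite{CasSall2015} indeed amplifies to the $\ell_\infty$-valued case, i.e. that it is implemented by a completely positive and trace-compatible structure, is the most delicate verification.
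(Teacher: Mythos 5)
Your proposal is correct and follows essentially the same route as the paper's own proof: the vector-valued ultraproduct embedding of Theorem \ref{ultraProdIntertwining}, the $L_\infty$-valued Schur--Fourier transference of Theorem \ref{transSchurFou}, and a Fubini-type stacking of the two maximal bounds that relies on complete positivity, the factorization \eqref{CP.S0.max} and hyperfiniteness (the paper's Lemma \ref{CP.StrongTensor} together with Lemma \ref{CP.lem.cpVectorValued}). The only difference is bookkeeping: you commute $\Ad_{X_\alpha^{1/p}}$ and $\Ad_{Y_\alpha^{1/p}}$ past the Schur multipliers (legitimate, since $X_\alpha, Y_\alpha$ are diagonal in the $G$-variable) and factor them out of the maximal norm at cost $C^{1/p}$ before running the tensor-product Fubini argument, whereas the paper states Lemma \ref{CP.StrongTensor} with the sandwiching maps $\Ad_X, \Ad_Y$ already interleaved, so no commutation step is needed.
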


Observe that, in the abelian case with trivial action $\theta = \1$,
Theorem \ref{CP.thm.StrongMax} follows by routinely applying
Fubini-type arguments. We obtain the following corollary.

\begin{corollary}
  \label{CP.cor.DiagMaximal}
   Let $\M$, $G$, $\theta$, $(T_\omega)_{\omega \in \Omega}$ and
   $(S_\omega)_{\omega \in \Omega}$ be like in the previous theorem for
   some fixed $(\Omega,\nu)$. We have that
   \begin{equation*}
     \big\| (S_\omega \rtimes T_\omega)_{\omega}: L_p(\M \rtimes_\theta G) \to L_p(\M \rtimes_\theta G; L_\infty(\Omega)) \big\|_\cb \, \leq \, C^\frac{1}{p} \, A \, B,
   \end{equation*}
   where $A$ and $B$ are defined like in
   \eqref{CP.thm.StrongMax.eq1}.
\end{corollary}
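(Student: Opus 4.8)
The plan is to read Corollary \ref{CP.cor.DiagMaximal} as the \emph{diagonal} case of Theorem \ref{CP.thm.StrongMax}. First I would apply that theorem with $\Omega_1 = \Omega_2 = \Omega$ and $\nu_1 = \nu_2 = \nu$, which produces the bimaximal estimate
\[
  \big\| (S_\omega \rtimes T_\zeta)_{(\omega,\zeta)} : L_p(\M \rtimes_\theta G) \to L_p(\M \rtimes_\theta G; L_\infty(\Omega) \mintensor L_\infty(\Omega)) \big\|_\cb \leq C^\frac{1}{p} \, A \, B.
\]
The diagonal family $(S_\omega \rtimes T_\omega)_\omega$ appearing in the corollary is exactly what one obtains from this doubly-indexed family by setting $\zeta = \omega$. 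Hence the whole content of the corollary is that passing to the diagonal does not increase the vector-valued $L_p$-norm, and no fresh amenability or accretivity input is required beyond the theorem; the constant $C^{1/p}$ is simply inherited.

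The mechanism I would use is the functoriality of Pisier's vector-valued $L_p$-spaces. Let $\pi: L_\infty(\Omega) \mintensor L_\infty(\Omega) \to L_\infty(\Omega)$ be the unital map determined on elementary tensors by $\pi(f \otimes g) = f \, g$; concretely $\pi$ sends a function of $(\omega,\zeta)$ to its restriction to the diagonal $\zeta = \omega$. Since the two commuting copies of $L_\infty(\Omega)$ generate a commutative $C^\ast$-algebra, $\pi$ is a genuine $\ast$-homomorphism, hence unital and completely positive, so $\| \pi \|_\cb = 1$. By the standard functorial property of $L_p(\N;\,\cdot\,)$, namely that complete contractions between coefficient operator spaces induce complete contractions on the associated vector-valued $L_p$-spaces (see \cite{Pi1998}), the amplification $\Id_{L_p(\M \rtimes_\theta G)} \otimes \pi$ is a complete contraction from $L_p(\M \rtimes_\theta G; L_\infty(\Omega) \mintensor L_\infty(\Omega))$ onto $L_p(\M \rtimes_\theta G; L_\infty(\Omega))$. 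Composing this complete contraction with the bimaximal operator yields precisely the diagonal operator, because $\pi$ collapses the pair $(\omega,\zeta)$ to $\omega$, so submultiplicativity of the cb-norm under composition gives
\[
  \big\| (S_\omega \rtimes T_\omega)_\omega \big\|_\cb \leq \| \Id \otimes \pi \|_\cb \cdot C^\frac{1}{p} \, A \, B \leq C^\frac{1}{p} \, A \, B.
\]

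The only delicate points I anticipate are bookkeeping rather than analytic. One must check that $\pi$ is well defined, which is immediate for the minimal tensor product of commutative von Neumann algebras, where diagonal restriction is a bona fide $\ast$-homomorphism even though the diagonal may be $\nu \otimes \nu$-null; and one must verify that the composite $(\Id \otimes \pi) \circ (S_\omega \rtimes T_\zeta)_{(\omega,\zeta)}$ is literally the diagonal family $(S_\omega \rtimes T_\omega)_\omega$, which is a direct unwinding of the definitions. As a sanity check in the completely positive, non-amplified case, the same inequality is visible from the order characterization \eqref{CP.S0.max}: if $z$ dominates $(S_\omega \rtimes T_\zeta)(u)$ for every pair $(\omega,\zeta)$, then a fortiori it dominates the diagonal subfamily, so $\| (S_\omega \rtimes T_\omega)(u) \|_{L_p(\M\rtimes_\theta G;L_\infty(\Omega))} \leq \| z \|_p$; the functorial argument above is what upgrades this observation to the full completely bounded statement.
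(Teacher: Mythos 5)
Your proposal is correct and is essentially the paper's own proof: the paper likewise applies Theorem \ref{CP.thm.StrongMax} with $\Omega_1 = \Omega_2 = \Omega$ and then composes with the diagonal restriction $\Id \otimes m$, $m(f \otimes g) = f\,g$, whose complete contractivity on $L_p(\M \rtimes_\theta G; L_\infty(\Omega) \mintensor L_\infty(\Omega))$ is exactly Lemma \ref{CP.lem.diagRestriction}. The only difference is that you supply the justification the paper omits (that $m$ is a $\ast$-homomorphism on the abelian minimal tensor product, hence u.c.p., and then functoriality of Pisier's vector-valued $L_p$-spaces over the hyperfinite algebra), which is a welcome addition rather than a divergence.
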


Recall that, since each $T_\omega$ above is a
Fourier multiplier, there is an essentially unique symbol $m_\omega$ such
that $T_\omega = T_{m_\omega}$. In order to prove the theorems above we
need some preliminary results. We will use the following characterization
of boundedness for $L_\infty$-valued Schur multipliers whose proof we omit.

\begin{proposition}
  \label{CP.prp.SchurMuestra}
  Let $(T_{m_\omega})_{\omega \in \Omega} \subset \CB(L_1(\L G))$, we have that
  $(M_{m_\omega})_\omega : S_p(L_2 G) \to S_p[L_\infty(\Omega)]$
  boundedly iff for every $a \in S_p^k$ and 
  $(b^\omega)_\omega \in S_{p'}^k[L_1(\Omega)]$ we have that
  \begin{equation}
    \label{SchurMuestra}
    \Big| \int_\Omega \sum_{i,j}^{k} m_{\omega}(g_i^{-1} g_j) \, a_{i j} \, b_{i j}^{\omega} \, d\,\mu(\omega) \Big|
    \leq
    K \, \| a \|_{S_p^k} \, \| (b^\omega)_\omega \|_{S_{p'}^k[L_1]}.
  \end{equation}
  Furthermore, the optimal $K$ satisfies that
  \[
    K = \big\| (M_{m_\omega})_\omega : S_p \to S_p[ L_\infty(\Omega)] \big\|.
  \]
  The analogous results for complete norms follows after taking
  $a_{i \, j} \in S_p^m$ and $b^{\omega}_{i j} \in S_{p}^{m}$ in
  \eqref{SchurMuestra}
\end{proposition}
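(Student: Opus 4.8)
The plan is to deduce the characterization directly from Pisier's duality theory for vector-valued Schatten classes, so that the statement becomes a matter of unwinding a single duality bracket. The crucial input is the completely isometric identification $S_{p'}[L_1(\Omega)]^\ast = S_p[L_\infty(\Omega)]$, valid for $1 < p \leq \infty$, see \cite{Pi1998}; it realises $S_p[L_\infty(\Omega)]$ as a dual space with bracket given by the bilinear trace pairing
\[
  \big\langle (c^\omega)_\omega, (b^\omega)_\omega \big\rangle = \int_\Omega \Tr \big( c^\omega \, (b^\omega)^{t} \big) \, d \, \mu(\omega) = \int_\Omega \sum_{i,j} c^\omega_{i j} \, b^\omega_{i j} \, d \, \mu(\omega).
\]
I would first record this, taking $E = L_1(\Omega)$ so that $E^\ast = L_\infty(\Omega)$, and note that it computes the norm of any $c \in S_p[L_\infty(\Omega)]$ as the supremum of $|\langle c, b \rangle|$ over the unit ball of $S_{p'}[L_1(\Omega)]$. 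At the endpoint $p = 1$ the same conclusion persists after embedding $S_1[L_\infty(\Omega)]$ isometrically into its bidual $(S_\infty[L_1(\Omega)])^\ast$ and invoking the injectivity of the construction.

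With this in hand I would write
\[
  \big\| (M_{m_\omega})_\omega : S_p \to S_p[L_\infty(\Omega)] \big\| = \sup_{\| a \|_{S_p} \leq 1} \big\| (M_{m_\omega} a)_\omega \big\|_{S_p[L_\infty(\Omega)]}
\]
and expand the inner norm by the duality above. Since the $\omega$-component of $(M_{m_\omega})_\omega(a)$ is precisely the matrix $[ m_\omega(g_i^{-1} g_j) \, a_{i j} ]_{i,j}$, pairing it against $(b^\omega)_\omega$ reproduces the integral $\int_\Omega \sum_{i,j} m_\omega(g_i^{-1} g_j) \, a_{i j} \, b_{i j}^\omega \, d \, \mu(\omega)$ appearing on the left of \eqref{SchurMuestra}. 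Combining the two suprema shows that the operator is bounded with norm at most $K$ exactly when the bilinear estimate holds with constant $K$, and that the least admissible $K$ equals the operator norm. The reduction to finitely many columns $a \in S_p^k$ and $(b^\omega)_\omega \in S_{p'}^k[L_1(\Omega)]$ is harmless, since finitely supported matrices are dense and both norms are monotone limits over finite sections indexed by subsets $\{ g_1, \dots, g_k \} \subset G$.

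For the completely bounded assertion I would run the identical argument one amplification at a time: by definition $\| (M_{m_\omega})_\omega \|_\cb$ is the supremum over $m \geq 1$ of the norms of $\Id_{S_p^m} \otimes (M_{m_\omega})_\omega$, and applying the scalar argument at each level---now with entries $a_{i j} \in S_p^m$ and $b^\omega_{i j} \in S_{p'}^m$---yields \eqref{SchurMuestra} in its matrix-valued form. The only genuinely delicate point is the vector-valued duality itself, together with the care needed at $p = 1$ where one must pass through the bidual; once these are in place the proof is a mechanical transcription of the pairing, with no further obstacle.
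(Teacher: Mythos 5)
The paper gives no proof of this proposition at all (it is introduced with ``whose proof we omit''), so there is nothing to compare against line by line; what follows assesses your argument on its own terms. The duality half of your proposal is sound and is indeed the natural mechanism: the norming property of the pairing between $S_p^k[L_\infty(\Omega)]$ and $S_{p'}^k[L_1(\Omega)]$ converts the bilinear estimate \eqref{SchurMuestra} into the statement that the finite sections $\big[m_\omega(g_i^{-1}g_j)\big]_{i,j}$ define uniformly bounded $L_\infty(\Omega)$-valued Schur multipliers on $S_p^k$, with the optimal constant equal to the uniform bound, and the amplification trick for the cb-statement is correct. Two imprecisions are worth flagging, though neither is fatal because only the isometric (norming) property of the pairing is used: at $p=\infty$ the equality $S_{p'}[L_1(\Omega)]^\ast=S_p[L_\infty(\Omega)]$ fails under Pisier's convention, since $(S_1[L_1(\Omega)])^\ast=\B(\ell_2)\weaktensor L_\infty(\Omega)$ strictly contains $\K\mintensor L_\infty(\Omega)$; and at $p=1$ the space $(S_\infty[L_1(\Omega)])^\ast$ is not the bidual of $S_1[L_\infty(\Omega)]$ --- what is true, and what you need, is that $S_1[L_\infty(\Omega)]$ embeds isometrically into $(S_\infty[L_1(\Omega)])^\ast$, which uses the semidiscreteness of $L_\infty(\Omega)$.

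The genuine gap is the step you dismiss as harmless: relating the operator $(M_{m_\omega})_\omega$ on $S_p(L_2 G)$ to its finite sections. The group $G$ here is a general locally compact unimodular group, so when $G$ is non-discrete there are no ``finitely supported matrices'' in $S_p(L_2 G)$ at all: points of $G$ have Haar measure zero, the sections at $g_1,\dots,g_k$ are not compressions of $S_p(L_2 G)$ by any projections, and neither implication of the proposition is a density statement. What is actually required is the discretization theorem for Schur multipliers with \emph{continuous} kernels --- the $S_p(L_2 G)$-norm of the multiplier with kernel $(g,h)\mapsto m_\omega(g^{-1}h)$ equals the supremum of the norms of its finite sections --- due to Lafforgue--de la Salle and used in precisely this form by Caspers--de la Salle \cite{LaffSall2011, CasSall2015}. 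Its proof replaces the points $g_i$ by small neighbourhoods of positive measure, compresses onto the corresponding block algebras by (completely positive) conditional expectations, and exploits the continuity of the symbols, which here is automatic because a symbol inducing a c.b.\ multiplier on $L_1(\L G)$ is a Herz--Schur multiplier symbol, hence continuous. Moreover you need this discretization in its $L_\infty(\Omega)$-valued form, i.e.\ compatibility of the block-averaging argument with the vector-valued norms, which itself deserves a proof since it is the maximal operator whose norm is being computed. Without this ingredient both directions of the ``iff'' are unproved for non-discrete $G$; with it, your duality argument does complete the proof.
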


The following theorem is just a vector-valued extension of Theorem
\ref{CP.thm.ultraProdIntertwining}.

\begin{theorem}
  \label{ultraProdIntertwining}
  Let $\M$ be a hyperfinite von Neumann algebra and $G$, $\theta$ be as
  above with $\theta$ amenable. For any $1 \leq p < \infty$ and any operator
  space $E$ we have a complete isometry
  \begin{eqnarray*}
    L_p(\M \rtimes_{\theta} G; E) & \xrightarrow{\quad j_p \quad} & \prod_{\alpha, \mathcal{U}} L_p(\M \weaktensor \B(L_2 G); E).
  \end{eqnarray*}
  Furthermore, when $E$ is an operator system $j_p$ is completely positive.
  
  If $E = C(X_i)$ is any abelian $C^\ast$-algebra, where $X_i$ are compact
  Hausdorff spaces, and $(T_{m_x})_{x \in X_2}$ is a family of
  Fourier multipliers in $L_p(\L G)$, then the following diagram
  commute
  \begin{equation*}
    \label{CP.Vector.CrossDiagramII}
    \xymatrix{
      L_p(\M \rtimes_{\theta} G; C(X_1)) \ar[d]^-{(\Id \rtimes T_{m_x})_{x \in X_2}} \ar[rr]^-{j_p} & & \displaystyle{ \prod_{\alpha, \U} L_p(\M \weaktensor \B(L_2 G);C(X_1))} \ar[d]^-{(\Id \otimes M_{m_x})_{x \in X_2}}\\
      L_p(\M \rtimes_\theta G; C(X_1 \times X_2)) \ar[rr]^-{j_p}                                    & & \displaystyle{ \prod_{\alpha, \U} L_p(\M \weaktensor \B(L_2 G);C(X_1 \times X_2))},
    }
  \end{equation*}
  where $M_{m_x}$ is the Herz-Schur multiplier associated with the symbol
  $m_x$.
  Furthermore, if $\theta$ has a $C$-approximating sequence
  and $(S_x)_{x \in X_2}$ are $\theta$-equivariant
  operators in $L_p(\M)$, then 
  \begin{equation*}
    \label{CP.Vector.CrossDiagramI}
    \xymatrix{
      L_p(\M \rtimes_{\theta} G; C(X_1)) \ar[d]^-{(S_{x} \rtimes \Id)_{x \in X_2}} \ar[rr]^-{j_p} & & \displaystyle{ \prod_{\alpha, \U} L_p(\M \weaktensor \B(L_2 G); C(X_1))} \ar[d]^-{(S_{x}^\alpha)^{\alpha, \U}_{x \in X_2}}\\
      L_p(\M \rtimes_\theta G;C(X_1 \times X_2)) \ar[rr]^-{j_p}                                   & & \displaystyle{ \prod_{\alpha, \U} L_p(\M \weaktensor \B(L_2 G); C(X_1 \times X_2))},
    }
  \end{equation*}
  where $(S^\alpha_x)_{x \in X_2}$ satisfies that
  \begin{eqnarray*}
    &      & \big\| (S^\alpha_x)_{x \in X_2}: L_p(\M \weaktensor \B(L_2 G); C(X_1)) \to L_p(\M \weaktensor \B(L_2 G); C(X_1 \times X_2)) \big\|_\cb\\
    & \leq & C^\frac{1}{p} \, \big\| (S_x)_{x \in X_2} : L_p(\M;C(X_1)) \to L_p(\M; C(X_1 \times X_2)) \big\|_\cb.
  \end{eqnarray*}
\end{theorem}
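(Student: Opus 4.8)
The statement (Theorem \ref{ultraProdIntertwining}) is the $E$-valued analogue of Theorem \ref{CP.thm.ultraProdIntertwining}, so the whole plan is to re-run the scalar argument after amplifying everything by the operator space $E$. The plan is to define $j_p$ on the vector-valued $L_p$-space by the same formula $j_p^\alpha(v) = X_\alpha^{1/p} \, j(v) \, X_\alpha^{1/p}$, now interpreted through the identification $L_p(\N;E)$ and its behaviour under two-sided multiplication by a fixed positive operator. First I would verify that each $j_p^\alpha$ remains a complete contraction on $L_p(\M \rtimes_\theta G; E)$. Since $X_\alpha \in \M \weaktensor \B(L_2 G)$ acts by left/right multiplication and such adjointable multiplications tensor trivially with $\Id_E$, the complete contractivity of $j_p^\alpha$ as proved in Lemma \ref{CP.lem.Iso}\ref{CP.lem.Iso1} passes to the $E$-valued setting by functoriality of Pisier's vector-valued construction (interpolation between the $p=\infty$ and $p=1$ endpoints is compatible with the $E$-valued structure). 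The complete isometry and complete positivity of $j_p = (j_p^\alpha)_\alpha^\U$ then follow from the duality/ultraproduct computation in the proof of Theorem \ref{CP.thm.ultraProdIntertwining}, with the pairing now taken against $L_{p'}(\M \rtimes_\theta G; E^\ast)$ and the asymptotic inner-product identity of Lemma \ref{CP.lem.Iso}\ref{CP.lem.Iso2} applied entrywise.

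For the two commuting diagrams, the Fourier/Herz–Schur one is again formal: the intertwining $j_p^\alpha \, (\Id \rtimes T_{m_x}) = (\Id \otimes M_{m_x}) \, j_p^\alpha$ holds at the level of each $\alpha$ exactly as before, and since it holds simultaneously for every $x \in X_2$ it yields the stated diagram after tensoring the range algebra with $C(X_1)$ and enlarging it to $C(X_1 \times X_2)$. The only subtlety is bookkeeping of the two function variables: the multiplier family indexed by $x \in X_2$ turns a $C(X_1)$-valued input into a $C(X_1 \times X_2)$-valued output, which is encoded precisely by viewing $(\Id \otimes M_{m_x})_{x \in X_2}$ as the canonical map $L_p(\,\cdot\,; C(X_1)) \to L_p(\,\cdot\,; C(X_1 \times X_2))$; I would simply record that the ultraproduct of these maps commutes with $j_p$ because each level does.

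The $\theta$-equivariant diagram is the part that requires the $C$-approximating sequence, and this is where I expect the real work to be. The plan is to reuse the factorization $j_p^\alpha \, (S \rtimes \Id) = \Ad_{X_\alpha^{1/p}} \, (S \otimes \Id) \, \Ad_{Y_\alpha^{1/p}} \, j_p^\alpha$ from the proof of Theorem \ref{CP.thm.ultraProdIntertwining}, now with the family $(S_x)_{x \in X_2}$ in place of a single $S$, defining $S_x^\alpha = \Ad_{X_\alpha^{1/p}} \, (S_x \otimes \Id) \, \Ad_{Y_\alpha^{1/p}}$. The $C^{1/p}$ bound on the c.b. norm comes verbatim from the accretivity estimate \eqref{CP.def.acretive}, since $\Ad_{X_\alpha^{1/p}}$ and $\Ad_{Y_\alpha^{1/p}}$ commute with $\Id_E$ and their c.b. norms are controlled by $\| X_\alpha \|^{2/p}$ and $\| Y_\alpha \|^{2/p}$ independently of $E$. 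The genuine obstacle is checking that the c.b.\ norm of the amplified family $(S_x)_{x \in X_2} \colon L_p(\M; C(X_1)) \to L_p(\M; C(X_1 \times X_2))$ is the quantity that actually gets multiplied by $C^{1/p}$ — i.e.\ that $\Ad$ by a fixed operator and the passage to the $C(X_1 \times X_2)$-valued target are compatible as maps of operator spaces. Here hyperfiniteness of $\M$ is essential, since it is what makes the Pisier $L_p(\M;E)$ theory (and hence the estimates on $S_x \otimes \Id_{C(X_i)}$) available; I would invoke it exactly as in the statement's hypotheses and then the norm inequality drops out of the scalar computation applied uniformly over the index $x$.
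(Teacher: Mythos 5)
Your overall architecture does match the paper's: define $j_p^\alpha = (\Ad_{X_\alpha^{1/p}}\, j)\otimes \Id_E$, obtain complete contractivity (and complete positivity on operator systems) from hyperfiniteness — this is exactly Lemma \ref{CP.lem.cpVectorValued}, which is the precise content behind your ``functoriality of Pisier's construction'' gloss — then prove an asymptotic pairing identity, pass to the ultraproduct, and get both diagrams from level-wise intertwining, with the $X_\alpha,Y_\alpha$ factorization and the accretivity bound \eqref{CP.def.acretive} giving the $C^{1/p}$ constant. All of that is sound and is what the paper does.

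The genuine gap is in your isometry step. You pair $L_p(\M\rtimes_\theta G;E)$ against $L_{p'}(\M\rtimes_\theta G;E^\ast)$ for an \emph{arbitrary} operator space $E$. For the chain $\|j_p x\|\geq \sup_{\|y\|\leq 1}|\langle j_p x, j_{p'} y\rangle| = \sup_{\|y\|\leq 1}|\langle x,y\rangle| = \|x\|$ to close, you need (i) an isometric inclusion of the ultraproduct $\prod_{\alpha,\U} L_p(\,\cdot\,;E)$ into $\big(\prod_{\alpha,\U} L_{p'}(\,\cdot\,;E^\ast)\big)^\ast$, and (ii) that $L_{p'}(\N;E^\ast)$ is \emph{norming} for $L_p(\N;E)$, i.e.\ that the natural map $L_p(\N;E)\to L_{p'}(\N;E^\ast)^\ast$ is isometric. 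Unlike the commutative Bochner-space setting, neither is automatic for operator-space-valued noncommutative $L_p$-spaces: the dual of $L_p(\N;E)$ is in general strictly larger than $L_{p'}(\N;E^\ast)$, and at $p=1$ one additionally runs into normality problems. The paper is structured precisely to avoid this point: it first reduces to the single case $E=\B(\H)$, using that every operator space embeds completely isometrically into some $\B(\H)$ and that $L_p(\N;\cdot)$ respects complete isometries, and then pairs the $\B(\H)$-valued spaces against $S_1(\H)$-valued ones — so the coefficient space of the element whose norm is being computed is a \emph{dual} space and the pairing is taken against \emph{predual} coefficients, which is the form of vector-valued duality that holds in general (with $S_1(\H)$ replaced by $S_1(\H)^{\ast\ast}$ to handle $p=1$). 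As written, your proposal asserts rather than proves the key technical fact of the vector-valued extension; it can be repaired either by performing the paper's reduction to $E=\B(\H)$, or by establishing (ii) directly via finite-dimensional approximation and the duality $S_p^n[E]^\ast = S_{p'}^n[E^\ast]$. A smaller misattribution: complete positivity of $j_p$ does not follow from the duality computation; it follows from each $j_p^\alpha$ being completely positive, which is the operator-system half of Lemma \ref{CP.lem.cpVectorValued}.
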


Before going into the proof we would like to clarify why we choose
$C(X)$-valued operators instead of $L_\infty(\Omega)$-valued, for
some measure space $\Omega$, if all we care about are maximal bounds.
The reason is that, in order to pass from the strong maximal type
arguments in Theorem \ref{CP.thm.StrongMax} to the Corollary
\ref{CP.cor.DiagMaximal} we need to restrict the maximal operator
$(S_\omega \rtimes T_\zeta)_{(\omega,\zeta)}$ to the diagonal
$\omega = \zeta$. This requires that the diagonal
restriction operator
$m : L_\infty(\Omega) \otimes L_\infty(\Omega) \to L_\infty(\Omega)$,
given by $m(f \otimes g) = f \, g$, to be completely bounded. That
is not the case is we take
$L_\infty(\Omega) \weaktensor L_\infty(\Omega) = L_\infty(\Omega)$.
Nevertheless it holds if we take
$L_\infty(\Omega) \mintensor L_\infty(\Omega)$, which is not a von
Neumann algebra.

In order to prove Theorem \ref{ultraProdIntertwining} we will need the
following well-known lemma, whose proof we omit.

\begin{lemma}[{\bf \cite{Pi1998}}]
  \label{CP.lem.cpVectorValued}
  Let $\M_1$, $\M_2$ be hyperfinite von Neumann algebras and $E$ an
  operator space. If $\psi: L_p(\M_1) \to L_p(\M_2)$ is a completely
  bounded map, then $\psi \otimes \Id_E: L_p(\M_1;E) \to L_p(\M_2;E)$ is
  completely bounded. Furthermore, if $E$ is an operator system, the map
  $\psi \mapsto \psi \otimes_E$ preserves complete positive maps.
\end{lemma}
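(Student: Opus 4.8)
The plan is to bypass interpolation-from-the-endpoints---which genuinely fails here, since a map that is completely bounded on $L_p$ need not be bounded at all on $L_1$ or $L_\infty$, so no two-sided endpoint control is available---and instead to realize $\psi \otimes \Id_E$ as a \emph{composition} operator on a mapping space, where complete boundedness is formal. The key input is Pisier's completely isometric identification of the vector-valued Schatten class with an operator space of c.b.\ maps: for $\M_1 = M_n$ and $1 \le p \le \infty$ one has $S_p^n[E] \cong \CB(S_{p'}^n, E)$, where $1/p + 1/p' = 1$ and $S_{p'}^n = (S_p^n)^\ast$ carries its natural operator space structure. This is verified at the endpoints via the standard finite-dimensional identities $M_n \mintensor E = \CB(S_1^n, E)$ and $S_1^n \osprojtensor E = \CB(M_n, E)$, and the general $p$ then follows by complex interpolation applied to the defining couple $(S_1^n \osprojtensor E,\, M_n \mintensor E)$, which in finite dimensions is compatible with the contravariant functor $\CB(\,\cdot\,, E)$.

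Under this identification a basic tensor $x \otimes e \in S_p^n \otimes E$ corresponds to the map $\xi \mapsto \langle \xi, x \rangle\, e$ on $S_{p'}^n$, and a direct check shows that $\psi \otimes \Id_E$ corresponds to \emph{pre-composition} with the adjoint, $u \mapsto u \circ \psi^\ast$, where $\psi^\ast : S_{p'}^m \to S_{p'}^n$ satisfies $\|\psi^\ast\|_\cb = \|\psi\|_\cb$. Since pre-composition by a fixed c.b.\ map has c.b.\ norm at most $\|\psi^\ast\|_\cb$ on the corresponding $\CB$-spaces, one obtains at once $\|\psi \otimes \Id_E : S_p^n[E] \to S_p^m[E]\|_\cb \le \|\psi\|_\cb$. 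Matrix amplifications are handled by the Fubini identity $M_k[S_p^n[E]] = S_p^n[M_k[E]]$, which reduces the $k$-th amplification to the same statement with $E$ replaced by the operator space $M_k[E]$; hence the estimate is genuinely on the complete norm, not merely the norm. (The reverse inequality, which is not needed here, follows by restricting to scalars.)

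To pass from matrix algebras to arbitrary hyperfinite $\M_1, \M_2$ I would use the inductive-limit description of the vector-valued spaces: $L_p(\M_i; E)$ is the closed union of subspaces $S_p^{n}[E]$ arising from an increasing net of matrix subalgebras, where the embeddings and the trace-preserving conditional expectations induce complete contractions whose $E$-valued extensions are again complete contractions by the very construction of $L_p(\M; E)$ (these are completely positive unital maps, for which the vector-valued boundedness is built in). Writing $\psi$ as the limit of its finite corners obtained by pre-composing with an inclusion and post-composing with a conditional expectation, applying the matrix case to each corner, and passing to the limit yields $\|\psi \otimes \Id_E\|_\cb \le \|\psi\|_\cb$ in general.

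For the final assertion, that $\psi \mapsto \psi \otimes \Id_E$ preserves complete positivity when $E$ is an operator system, I would track the positive cone through the same scheme: $\mintensor$ (resp.\ $\osprojtensor$) with an operator system carries completely positive maps to completely positive maps at the two endpoints, complex interpolation preserves positivity between the interpolated cones, and the conditional-expectation limit is completely positive, so composing a completely positive corner with the completely positive $\psi$ keeps positivity. The main obstacle is exactly this operator-space bookkeeping: one must check that Pisier's identification $S_p^n[E] \cong \CB(S_{p'}^n, E)$ is \emph{completely} isometric and simultaneously compatible with matrix amplification and with the positive cones, so that ``pre-composition by a c.b.\ (resp.\ c.p.) map'' is visible on every level. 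Once this compatibility is secured, both the boundedness and the positivity statements are formal; it is precisely this compatibility that one imports from \cite{Pi1998}.
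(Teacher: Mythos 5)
The paper gives you nothing to compare against here: Lemma \ref{CP.lem.cpVectorValued} is stated as ``well-known'' and its proof is explicitly omitted, with a citation to \cite{Pi1998}. Judged on its own merits, your argument breaks at its foundation. The central identification $S_p^n[E] \cong \CB(S_{p'}^n, E)$ is false for every $p \neq \infty$. For any \emph{finite-dimensional} operator space $X$ the canonical map $X^\ast \mintensor E \to \CB(X,E)$ is a surjective complete isometry, so your claimed identification is exactly the statement $S_p^n[E] = S_p^n \mintensor E$ for all $E$ and all $p$ --- i.e.\ that the vector-valued Schatten class is nothing but the minimal tensor product. This is precisely what Pisier's theory is built to avoid. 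Concretely, your second endpoint identity ``$S_1^n \osprojtensor E = \CB(M_n,E)$'' is wrong: the right-hand side equals $S_1^n \mintensor E$, not $S_1^n \osprojtensor E$, and these differ badly. For instance, $S_1^n \osprojtensor S_1^n = (M_n \mintensor M_n)^\ast = S_1^{n^2}$, whereas $S_1^n \mintensor S_1^n = (M_n \osprojtensor M_n)^\ast$, and $M_n \osprojtensor M_n \neq M_{n^2}$ (the cb-distance between them grows with $n$). A sanity check that something had to be wrong: if $S_p[E]$ were the minimal tensor product, then $\psi \otimes \Id_E$ would be completely bounded by sheer functoriality of $\mintensor$, for \emph{arbitrary} von Neumann algebras; but the lemma genuinely requires hyperfiniteness --- by Remark \ref{CP.rmk.HFreguralMaps} and Haagerup's decomposability theorem, the statement fails in the non-hyperfinite case. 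Since the pre-composition-by-$\psi^\ast$ mechanism (which is indeed formal on $\CB$-spaces) acts on the wrong space, the complete-boundedness estimate, the amplification step, and the positivity argument all collapse with it.

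There is a second, independent gap: even granting mapping-space descriptions at the two endpoints, your interpolation step asserts $[\CB(X_0,E),\CB(X_1,E)]_\theta = \CB([X_0,X_1]_\theta, E)$ in finite dimensions. Since $\CB(X_i,E) = X_i^\ast \mintensor E$, this is the claim that $\cdot \mintensor E$ commutes with complex interpolation, which is false in general (already for Banach spaces the injective tensor product does not commute with interpolation); it cannot be invoked as ``compatibility of the defining couple with the functor $\CB(\cdot,E)$''. The correct finite-dimensional core, which the paper delegates to \cite{Pi1998}, goes instead through Pisier's factorization description of the vector-valued norm: every $x \in S_p^n[E]$ of norm $< 1$ factors as $x = (a \otimes 1_E)\, y\, (b \otimes 1_E)$ with $a, b \in S_{2p}^n$ and $y \in M_n \mintensor E$ of norms $< 1$ (equivalently $S_p[E] = C_p \Haag E \Haag R_p$), combined with the characterization of $\|\psi\|_\cb$ as $\sup_E \|\psi \otimes \Id_E\|$. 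Your outer layer --- writing a hyperfinite algebra as a limit of matrix corners via inclusions and trace-preserving conditional expectations --- is the standard and correct way to pass from $M_n$ to general hyperfinite $\M_i$, but the matrix-level statement it rests on is not established by your argument.
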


\begin{remark}
  \label{CP.rmk.HFreguralMaps}
  When $\M_1 = \M_2=\M$ is hyperfinite and $p=1$, every map $\phi$ satisfying
  that $\phi \otimes \Id_E$ is bounded for every $E$ is actually completely
  bounded, the same follows for
  $p = \infty$ when $\psi$ is normal. For general $p$, the maps $\psi$
  satisfying that $\| \psi \otimes \Id_E: L_p(\M;E) \to L_p(\M;E) \| < \infty$,
  when $E = \ell_\infty$, are called \emph{regular maps} and were
  studied in \cite{Pi1995Regular}. Such maps are exactly those which can
  be expressed as linear combinations of completely positive ones. In the
  non-hyperfinite case the theorem above is false. Indeed, in
  \cite{Haa1985Injectivity}, Haagerup proved that all the completely
  bounded maps in $\M$ are linear combinations of completely
  positive maps iff $\M$ is hyperfinite.
\end{remark}

\begin{proof}{\bf \, (of Theorem \ref{ultraProdIntertwining})}
  Let $(x_\alpha)_\alpha$ be an approximating sequence for the amenable
  action $\theta$. We can construct $X_\alpha$ as in the proof of Theorem
  \ref{CP.thm.ultraProdIntertwining}. For each $j^\alpha_p$ by
  \[
    j_p^\alpha = (\Ad_{X_\alpha^{1/p}} \, j) \otimes \Id_E
  \]
  and by Lemma \ref{CP.lem.cpVectorValued} such maps are complete
  contractions, i.e.
  \[
    \big\| j_p^\alpha: L_p(\M \rtimes_\theta G; E) \to L_p(\M \weaktensor \B(L_2 G); E) \big\|_\cb \leq 1.
  \]
  They are also completely positive when $E$ is an operator system. Let us
  denote temporarily such maps by $j_{p,E}^\alpha$. Now it is enough to
  prove that
  \begin{equation}
    \label{CP.prf.AsympIsoVectorValued}
    \lim_{\alpha, \U} \big\langle (j_{p, \B(\H)}^\alpha \, x), (j_{p',S_1(\H)}^\alpha \, y) \big\rangle = \langle x, y \rangle,
  \end{equation}
  where $\langle \cdot, \cdot \rangle$ is the duality pairing between
  $L_p(\M \rtimes_\theta G; S_1(\H))$ and
  $L_{p'}(\M \rtimes_\theta G; \B(\H))$. That case suffices since we
  can always embed $E$ in a completely isometric way inside $\B(H)$.
  The proof of \eqref{CP.prf.AsympIsoVectorValued} follows like in the
  scalar case. The identity implies that $j_p = (j_p^\alpha)^\U_\alpha$ is
  isometric since we can use that
  \[
    \begin{array}{>{\displaystyle}rc>{\displaystyle}ll}
      \prod_{\alpha, \U} L_{p}(\M;\B(\H)) & \subset & \bigg( \prod_{\alpha, \U} L_{p'}(\M;S_1(\H)) \bigg)^\ast               & \mbox{ when } 1 < p \leq \infty\\
      \prod_{\alpha, \U} L_{1}(\M;\B(\H)) & \subset & \bigg( \prod_{\alpha, \U} L_\infty(\M;S_1(\H)^{\ast \ast}) \bigg)^\ast & \mbox{ otherwise }
    \end{array}
  \]
  and proceed like in the proof of Theorem \ref{CP.thm.ultraProdIntertwining}.
  The commutation identities follow similarly. 
\end{proof}

Theorems \ref{ultraProdIntertwining} gives a way of transferring bounds from
$\Id \otimes M$ where $M$ is a $C(X)$-valued Schur multiplier to
$\Id \rtimes T$, where $T$ is its associated $C(K)$-valued Fourier multiplier.
In order to bound the maximal operator given by Schur multipliers 
$(\Id_\M \otimes M_{m_\omega})_{\omega \in \Omega}$ we need the following 
transference result generalizing \cite[Theorem 4.2]{CasSall2015} to the 
$L_\infty$-valued case.

\begin{theorem}
  \label{transSchurFou}
  Let $G$ be a LCH and unimodular group, $\Omega$ a measure space and 
  $(T_{m_\omega})_{\omega \in \Omega} \subset \CB(L_1(\L G))$ a family of
  Fourier multipliers. If $(M_{m_\omega})_{\omega \in \Omega}$ is the 
  associated family of Schur multipliers then, for every
  $1 \leq p \leq \infty$
  \begin{eqnarray*}
    &      & \big\| (M_{m_\omega})_{\omega \in \Omega}: S_p(L_2 G) \to S_p(L_2 G; L_\infty(\Omega)) \big\|_\cb\\
    & \leq & \big\| (T_{m_\omega})_{\omega \in \Omega}: L_p(\L G), \, L_p(\L G; L_\infty(\Omega)) \big\|_\cb.
  \end{eqnarray*}
\end{theorem}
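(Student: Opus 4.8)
The plan is to follow the transference scheme of \cite{CasSall2015} in the direction that goes from Fourier to Schur (which requires no amenability at all), and to carry the extra variable $\omega\in\Omega$ through it by means of Proposition \ref{CP.prp.SchurMuestra}. By that proposition it is enough to fix $g_1,\dots,g_k\in G$, a matrix $a=[a_{ij}]\in S_p^k$ and a family $(b^\omega)_\omega=[b^\omega_{ij}]\in S_{p'}^k[L_1(\Omega)]$, and to bound
\[
  I=\int_\Omega \sum_{i,j=1}^{k} m_\omega(g_i^{-1}g_j)\,a_{ij}\,b^\omega_{ij}\,d\mu(\omega)
\]
by $\|(T_{m_\omega})_\omega\|_\cb\,\|a\|_{S_p^k}\,\|(b^\omega)_\omega\|_{S_{p'}^k[L_1]}$. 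On the Fourier side the c.b. norm of $(T_{m_\omega})_\omega:L_p(\L G)\to L_p(\L G;L_\infty(\Omega))$ admits the analogous description: amplifying by $\Id_{M_k}$ and using the duality between $L_p(\,\cdot\,;L_\infty(\Omega))$ and $L_{p'}(\,\cdot\,;L_1(\Omega))$, it equals the supremum of $\big|\int_\Omega(\Tr\otimes\tau)\big[(\Id\otimes T_{m_\omega})(A)\,B^\omega\big]\,d\mu(\omega)\big|$ over $A\in S_p^k[L_p(\L G)]$ and $(B^\omega)_\omega\in S_{p'}^k[L_{p'}(\L G;L_1(\Omega))]$ of norm at most one. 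The whole task is therefore to manufacture such $A$ and $(B^\omega)$ out of the data $g_i,a,b^\omega$ so that this pairing reproduces $I$ without inflating the norms.

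First I would build the transfer elements by localising near the identity. Fix a small symmetric neighbourhood $V\ni e$ and a function $\psi_V\in C_c(G)$, supported in $V$, with $\psi_V\geq 0$ pointwise and $\lambda(\psi_V)\geq 0$ (take $\psi_V$ a convolution square of a compactly supported bump), normalised so that $\|\lambda(\psi_V)\|_{L_p(\L G)}=1$; then put $\lambda(\eta_V)=\lambda(\psi_V)^{p-1}$, the H\"older-dual direction, so that $\|\lambda(\eta_V)\|_{L_{p'}(\L G)}=1$ and $\tau\big(\lambda(\psi_V)\lambda(\eta_V)\big)=1$. Writing $D=\sum_i e_{ii}\otimes\lambda_{g_i}\in M_k\weaktensor\L G$, which is unitary, set
\[
  A=D^\ast\big(a\otimes\lambda(\psi_V)\big)\,D,
  \qquad
  B^\omega=D^\ast\big(b^\omega\otimes\lambda(\eta_V)\big)\,D.
\]
Since conjugation by the unitary $D$ is a complete isometry of $L_p(M_k\weaktensor\L G)=S_p^k[L_p(\L G)]$ and $\|a\otimes x\|=\|a\|_{S_p^k}\,\|x\|_p$, one gets exactly $\|A\|_{S_p^k[L_p(\L G)]}=\|a\|_{S_p^k}$, and, by the corresponding Fubini identity together with the isometry $\Ad_D\otimes\Id_{L_1(\Omega)}$, also $\|(B^\omega)\|_{S_{p'}^k[L_{p'}(\L G;L_1)]}=\|(b^\omega)\|_{S_{p'}^k[L_1]}$.

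Next I would compute the pairing. The $(i,j)$ entry of $A$ is $a_{ij}\,\lambda(w^V_{ij})$ with $w^V_{ij}(x)=\psi_V(g_i\,x\,g_j^{-1})$, and applying $\Id\otimes T_{m_\omega}$ simply replaces $\psi_V$ by $m_\omega\,\psi_V$. Using $(\Tr\otimes\tau)\big[(e_{ij}\otimes u)(e_{kl}\otimes v)\big]=\delta_{jk}\delta_{il}\,\tau(uv)$, the identity $\tau\big(\lambda(f)\lambda(h)\big)=\int_G f(y)h(y^{-1})\,d\mu(y)$, and the unimodular change of variables $y\mapsto g_i\,y\,g_j^{-1}$, the pairing collapses to
\[
  \int_\Omega\sum_{i,j} a_{ij}\,b^\omega_{ji}\,
  \Big(\int_G m_\omega(g_i^{-1}\,y\,g_j)\,\psi_V(y)\,\eta_V(y^{-1})\,d\mu(y)\Big)\,d\mu(\omega).
\]
The inner weight $\psi_V(y)\eta_V(y^{-1})\,d\mu(y)$ has total mass $\tau\big(\lambda(\psi_V)\lambda(\eta_V)\big)=1$ and is supported in $V$; hence, as $V$ shrinks to $\{e\}$, $m_\omega(g_i^{-1}y\,g_j)\to m_\omega(g_i^{-1}g_j)$ and the inner integral tends to $m_\omega(g_i^{-1}g_j)$. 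Passing to the limit under $\int_\Omega$ recovers exactly $I$ (the transposition $b^\omega_{ji}$ is the one already built into the trace pairing of Proposition \ref{CP.prp.SchurMuestra}), while for every $V$ the bound by $\|(T_{m_\omega})\|_\cb$ is in force. This yields $|I|\leq\|(T_{m_\omega})\|_\cb\,\|a\|_{S_p^k}\,\|(b^\omega)\|_{S_{p'}^k[L_1]}$, and Proposition \ref{CP.prp.SchurMuestra} turns it into the asserted inequality; the complete-norm version is identical once the entries $a_{ij},b^\omega_{ij}$ are allowed to be matrix valued, the group-theoretic computation being unaffected.

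The step I expect to be the main obstacle is the interchange of the limit $V\to\{e\}$ with the integral over $\Omega$ and the pointwise reading of the symbols. The $m_\omega$ are a priori defined only almost everywhere, so $m_\omega(g_i^{-1}g_j)$ must be evaluated at common Lebesgue points, chosen measurably in $\omega$ (as in \cite{CasSall2015}, after a preliminary reduction to continuous symbols); and the convergence of the localised integrals has to be dominated uniformly in $\omega$ in order to apply dominated convergence on $\Omega$. The delicate point is a uniform $L_1(G)$ control of the weights $\psi_V(y)\eta_V(y^{-1})$ as $V$ shrinks: the exact operator-power choice $\lambda(\eta_V)=\lambda(\psi_V)^{p-1}$ keeps the two norm constants equal to one, but may need to be replaced by a mildly mollified approximant with vanishing loss to guarantee that these weights form a genuine approximate identity. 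Finally, the endpoints $p=1,\infty$ require the matching endpoint forms of Proposition \ref{CP.prp.SchurMuestra} and of the $L_p(L_\infty)$--$L_{p'}(L_1)$ duality, with $p=\infty$ best handled directly in the spirit of the Bo\.zejko--Fendler identity \cite{BoFend1991} or by interpolation once the open range $1<p<\infty$ is settled.
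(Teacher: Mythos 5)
Your overall scheme is the same as the paper's: reduce to the bilinear estimate of Proposition \ref{CP.prp.SchurMuestra}, conjugate $a\otimes x$ and $b^\omega\otimes y$ by the diagonal unitary $\mathrm{diag}(\lambda_{g_1},\dots,\lambda_{g_k})$ so that the shifted symbols $m_\omega(g_i^{-1}\,\cdot\,g_j)$ appear entrywise, pair, and pass to the limit with dominated convergence in $\omega$. The paper does exactly this, except that where you build the sampling pair $\big(\lambda(\psi_V),\lambda(\psi_V)^{p-1}\big)$ by hand, it invokes \cite[Lemma 4.1]{CasSall2015}, which supplies norm-one elements $x_n\in L_p(\L G)$, $y_n\in L_{p'}(\L G)$ with $\lim_n\langle y_n,T_m x_n\rangle=m(e)$ --- but only for a \emph{dense set of exponents} $p$, the remaining exponents being recovered afterwards.

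That restriction is not an accident, and it marks the genuine gap in your construction. For non-integer $p$ the operator $\lambda(\psi_V)^{p-1}$ is defined by functional calculus, and the function $\eta_V$ with $\lambda(\eta_V)=\lambda(\psi_V)^{p-1}$ (via the Plancherel identification, when it applies) is merely of positive type: it is neither pointwise nonnegative nor supported near $e$. Consequently your weight $\psi_V(y)\,\eta_V(y^{-1})\,d\mu(y)$, although supported in $V$ (thanks to $\psi_V$) and of total integral $\tau(\lambda(\psi_V)^p)=1$, is a \emph{signed} measure, and the quantity your limiting argument actually needs, $\int_G|\psi_V(y)\,\eta_V(y^{-1})|\,d\mu(y)$, admits no uniform upper bound: Cauchy--Schwarz gives only the lower bound $\geq 1$, and nothing prevents it from blowing up as $V$ shrinks. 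Without that uniform $L_1$ control the localized integrals need not converge to $m_\omega(g_i^{-1}g_j)$, and the dominated-convergence step over $\Omega$ has no dominating function either. This is precisely why Caspers--de la Salle first work with exponents for which the relevant powers of $\lambda(\psi_V)$ are honest convolution powers (hence with genuine support and positivity control), and then fill in all $1\leq p\leq\infty$ by density of exponents and interpolation/duality; you flag the obstacle yourself (the ``mildly mollified approximant''), but the remedy is not carried out, and you invoke interpolation only for the endpoints rather than as the device that handles general $p$. If your ad hoc construction is replaced by the cited lemma plus that density/interpolation step, the rest of your argument --- the unitary conjugation, the norm identities, the computation of the pairing, the reduction to continuous symbols, and the matrix-valued entries for the complete norm --- goes through and coincides with the paper's proof.
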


\begin{proof}
  Let $\mu$ be a probability measure over $\Omega$ such that 
  $L_1(\Omega, \mu)^\ast = L_\infty(\Omega)$, by \cite[Lemma 4.1]{CasSall2015}
  there is a dense subset of exponents $1 \leq p \leq \infty$ such that we can
  choose sequences $x_n$ and $y_n$ of norm one elements in $L_p(\L G)$ and 
  $L_{p'}(\L G)$ such that
  \[
    \lim_{n} \, \langle y_n, T_m x_n\rangle = m(e).
  \]
  Let us define $z_n = y_n \otimes \chi_{\Omega} \in L_p(\L G;L_1(\Omega, \mu))$.
  Since the $L_1(\Omega; L_p(\L G))$-norm is larger then the
  $L_p(\L G;L_1(\Omega))$-norm we obtain that $\|z_n\|_{L_p(\L G;L_1)} \leq 1$.
  Furthermore, if $(T_{m_\omega})_{\omega \in \Omega}$ is a family of
  multiplier as in the hypothesis, then
  \begin{equation}
    \label{integralApp}
    \lim_{n} \, \langle z_n, T_{m_\omega} x_n \rangle = \int_\Omega m_{\omega}(e) \, d\,\mu(\omega),
  \end{equation}
  where the parying $\langle \cdot, \cdot \rangle$ is the duality
  pairing between $L_p(\L G;L_\infty)$ and $L_{p'}(\L G;L_1)$. Proving
  \begin{eqnarray}
      &       & \Big| \int_\Omega \sum_{i,j}^{k} m_{\omega}(g_i^{-1} g_j) \, a_{i j} \, b_{i j}^{\omega} \, d\,\mu(\omega) \Big| \nonumber\\
      & \leq  & \big\|(T_\omega)_\omega: L_p(\L G) \to L_p(\L G; L_\infty(\Omega)) \big\|_\cb \, \| a \|_{S^k_p} \, \|(b^{\omega})_\omega\|_{S_p^k[L_1]} \label{SchurDuality}
  \end{eqnarray}
  implies, by Proposition \ref{SchurMuestra}, that
  \[
    \|(M_{m_\omega})_\omega\|_{\B(S_p, \, S_p(L_\infty))}
    \leq
    \big\|(T_{m_\omega})_\omega:L_p(\L G) \to L_p(\L G; L_\infty) \big\|_\cb \, \| a \|_{S^k_p} \|(b^\omega)_{\omega}\|_{S_p^k[L_1]}.
  \]
  In order to obtain the same bound for the complete norms it is enough to
  take $a_{i j} \in S_p^m$ and repeat the calculation. Therefore to prove
  the claim it suffices to prove (\ref{SchurDuality}). Pick $x_n$ and $z_n$
  like in (\ref{integralApp}) and consider $A_n \in S_p^k[L_p(\L G)]$ and
  $B_n^\omega \in S_{p'}^k[L_{p'}(\L G; L_1(\Omega))]$ given by
  \begin{eqnarray*}
    A_n        & = & u^* \, (a \otimes x_n) \, u \\
    B^\omega_n & = & u^* \, (b^\omega \otimes z_n) \, u
  \end{eqnarray*}
  where $u$ is the unitary in $M_k \otimes \L G$ given by
  \[
    u = 
      \begin{pmatrix}
        \lambda_{g_1} & 0             & \cdots & 0     \\
        0             & \lambda_{g_2} & \cdots & 0     \\
        \vdots        &               &        & \vdots \\
        0             & 0             & \cdots & \lambda_{g_k}    
      \end{pmatrix}
  \]
  As a consequence $\|A_n\|_{S_p^k[L_p(\L G)]} = \|x_n\|_{L_p(\L G)}$ and
  $\|B_n\|_{S_{p'}^k[L_{p'}(\L G;L_1)]} \leq \|z_n\|_{L_{p'}(\L G; L_1)}$. Observe
  that $\lambda_{g_i} \, T_m(\lambda_{g_i}^* x \lambda_{g_j}) \lambda_{g_j}^*
  = T_{m_{i j}}(x)$, where $m_{i j}(h) = m(g_{i}^{-1} \, h \, g_{j})$,
  therefore
  \begin{eqnarray*}
    &      & \int_\Omega \sum_{i,j}^{k} m_{\omega}(g_i^{-1} g_j) a_{i j} b_{i j}^{\omega} \, d\,\mu(\omega) \nonumber\\
    & =    & \int_\Omega \sum_{i,j}^{k} a_{i j} b_{i j}^{\omega} \lim_n \langle z_n, T_{m_{i j}^\omega} x_n \rangle \, d\,\mu(\omega)\\
    & =    & \lim_n \int_\Omega \sum_{i,j}^{k} a_{i j} b_{i j}^{\omega} \langle z_n, T_{m_{i j}^\omega} x_n \rangle \, d\,\mu(\omega)\\
    & =    & \lim_n \langle (B^{\omega}_n)_\omega, ((\Id \otimes T_{m_\omega}) A_n)_\omega \rangle\\
    & \leq & \big\| (T_{m_\omega})_\omega:L_p(\L G) \to L_p(\L G; L_\infty) \big\|_\cb \, \|a\|_{S_p^k} \|(b^\omega)_\omega\|_{S_p^k[L_1]}.
  \end{eqnarray*}
  We have used the Dominated Convergence Theorem to exchange the limit and
  the integral in the second equation, which is justified since the multipliers
  $m_\omega$ are uniformly bounded.
\end{proof}

We can pass to the proof of the strong maximal bounds. Since we are going to
reduce the problem to that of tensor product it is convenient to recall the
following modification of the result for tensor products, see
\cite[Lemma 2.8]{GonJunPar2015}, whose proof is a trivial consequence of
\eqref{CP.S0.max}. We include the proof for the sake of completeness.

\begin{lemma}
  \label{CP.StrongTensor}
  Let $(M_i,\tau_i)$, for $i \in \{1,2\}$ be two hyperfinite von Neumann
  algebras with n.s.f. traces, $(\Omega_i, \nu_i)$ two measure spaces and
  $(S_\omega)_{\omega \in \Omega_1}$, $(T_\omega)_{\omega \in \Omega_2}$
  be families of completely positive operators satisfying that
  \[
    {\setstretch{1.25}
      \begin{array}{rcl>{\displaystyle}ll}
        A_1 & := & \big\| (T_\omega)_{\omega \in \Omega_1}: L_p(\M_1) \to L_p(\M_1;L_\infty(\Omega_1)) \big\|_\cb & < & \infty\\
        A_2 & := & \big\| (S_\omega)_{\omega \in \Omega_2}: L_p(\M_2) \to L_p(\M_2;L_\infty(\Omega_2)) \big\|_\cb & < & \infty
      \end{array}.
    }
  \]
  Then, we have that
  \[
    \big\| (R_{\omega, \zeta})_{(\omega,\zeta)}:
    L_p(\M_1 \weaktensor \M_2) \to L_p(\M_1 \weaktensor \M_2; L_\infty(\Omega_1) \mintensor L_\infty(\Omega_2)) \big\|_\cb
    \leq A_1 \, A_2,
  \]
  where $R_{\omega, \zeta} = \Ad_Y \, (T_\omega \otimes \Id) \, \Ad_X \, (\Id \otimes S_\zeta)$
  and $X, Y \in \M_1 \weaktensor \M_2$ are self adjoint contractive operators.
\end{lemma}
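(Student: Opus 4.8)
The plan is to reduce everything to the order characterisation \eqref{CP.S0.max} of the maximal norm and then chain three elementary dominations. Every ingredient of $R_{\omega,\zeta}=\Ad_Y\,(T_\omega\otimes\Id)\,\Ad_X\,(\Id\otimes S_\zeta)$ is completely positive: the families $(T_\omega)_\omega$ and $(S_\zeta)_\zeta$ are cp by hypothesis, while $\Ad_X,\Ad_Y$ preserve positivity because $X,Y$ are self-adjoint and are $L_p$-contractive because $X,Y$ are contractions. Since the entries are cp, the complete bound is determined by its action on positive inputs after matrix amplification (the standard reduction for families of completely positive maps, cf. Remark \ref{CP.rmk.HFreguralMaps}); so I fix $n$ and a positive $U\in L_p(M_n\weaktensor\M_1\weaktensor\M_2)_+$ with $\|U\|_p\le 1$, noting that $M_n\weaktensor\M_1\weaktensor\M_2$ is again hyperfinite, so that the vector-valued machinery of Lemma \ref{CP.lem.cpVectorValued} stays available.

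First I would record that the two maximal bounds survive tensoring with an identity. Regarding $(S_\zeta)_\zeta$ as a single completely bounded map $L_p(\M_2)\to L_p(\M_2;L_\infty(\Omega_2))$ of cb norm $A_2$ and invoking the hyperfiniteness of $\M_1$ (and of $M_n$) together with Lemma \ref{CP.lem.cpVectorValued},
\[
  \big\| (\Id\otimes S_\zeta)_\zeta : L_p(\M_1\weaktensor\M_2)\to L_p(\M_1\weaktensor\M_2;L_\infty(\Omega_2)) \big\|_\cb \le A_2,
\]
and symmetrically the amplification of $(T_\omega\otimes\Id)_\omega$ is bounded by $A_1$. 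This is the step I expect to be the genuine obstacle, since here I tensor a maximal bound with the identity of a full hyperfinite factor rather than of a matrix algebra; hyperfiniteness is exactly what lets me approximate $\Id_{\M_1}$ by matrix amplifications and thereby preserve the completely bounded norm.

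With these bounds in hand I would chain the dominations, using \eqref{CP.S0.max} extended verbatim from $\ell_\infty$ to the abelian $C^\ast$-algebra $L_\infty(\Omega_1)\mintensor L_\infty(\Omega_2)$. As $U\ge 0$ and $\Id\otimes S_\zeta$ is cp, the family $\{(\Id\otimes S_\zeta)(U)\}_\zeta$ is positive, so there is $z_1\ge 0$ dominating it with $\|z_1\|_p\le A_2$. Order preservation and $L_p$-contractivity of $\Ad_X$ give $\Ad_X(\Id\otimes S_\zeta)(U)\le \Ad_X(z_1)=:z_1'$ with $\|z_1'\|_p\le A_2$, and crucially $z_1'$ no longer depends on $\zeta$. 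I may therefore dominate the remaining $\omega$-family $\{(T_\omega\otimes\Id)(z_1')\}_\omega$ by a single $z_2\ge 0$ with $\|z_2\|_p\le A_1\|z_1'\|_p\le A_1A_2$, and set $z_2'=\Ad_Y(z_2)$.

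Reading $R_{\omega,\zeta}$ from right to left and applying order preservation at each stage yields $R_{\omega,\zeta}(U)\le z_2'$ simultaneously for all $(\omega,\zeta)$, with $\|z_2'\|_p\le A_1A_2$. The domination characterisation then bounds the $L_p(\,\cdot\,;L_\infty(\Omega_1)\mintensor L_\infty(\Omega_2))$-norm of $(R_{\omega,\zeta}(U))_{(\omega,\zeta)}$ by $A_1A_2$; since $n$ and the positive $U$ were arbitrary, the completely bounded estimate $\|(R_{\omega,\zeta})\|_\cb\le A_1A_2$ follows. Apart from the tensor-stability noted above, the only point requiring care is the reduction to positive inputs at the matrix level, which is standard for cp families.
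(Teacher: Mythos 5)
Your proof is correct and takes essentially the same route as the paper's: amplify the two maximal bounds through the hyperfinite tensor factor (the vector-valued theory of Lemma \ref{CP.lem.cpVectorValued} together with the Fubini identity $L_p(\M_1;L_p(\M_2;L_\infty))=L_p(\M_1\weaktensor\M_2;L_\infty)$), then chain order dominations via \eqref{CP.S0.max} through the positivity-preserving maps $\Id\otimes S_\zeta$, $\Ad_X$, $T_\omega\otimes\Id$, $\Ad_Y$, exactly as in the paper's two-step domination of $R_{\omega,\zeta}(x)$. The only difference is one of care rather than substance: you make explicit the reduction to positive matrix-valued inputs, which the paper leaves implicit when it applies \eqref{CP.S0.max} to a general $x$.
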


\begin{proof}
  Recall that if $\phi:E \to F$ is completely bounded, then
  $\| \Id \otimes \phi: S_p^n[E] \to S_p^n[F] \| \leq \|\phi\|_\cb$. As a
  consequence, the same is true for
  $\Id \otimes \phi: L_p(\M;E) \to L_p(\M;F)$, when $\M$ is hyperfinite. We are 
  going to use also that $L_p(\M_1;L_p(\M_2;L_\infty)) =
  L_p(\M_1 \weaktensor \M_2;L_\infty)$. By complete positivity of
  $(\Id \otimes S_\zeta)$ and \eqref{CP.S0.max} we have that for every
  $x \in L_p(\M_1 \weaktensor \M_2)$ there is another
  $u$ with $\| u \|_p \leq (1 + \epsilon) \, B \| x \|_p$, where
  $\epsilon$ can be taken arbitrarily small. Now
  \begin{eqnarray*}
    R_{\omega, \zeta}(x) & =    & \Ad_Y \, (T_\omega \otimes \Id) \, \Ad_X \, (\Id \otimes S_\zeta)(x)\\
                         & \leq & \Ad_Y \, (T_\omega \otimes \Id) \, \Ad_X \, u
  \end{eqnarray*}
  and applying the same procedure to $\Ad_X \, u$ once again gives
  the desired identity.
\end{proof}

\begin{proof}{\bf \, (of Theorem \ref{CP.thm.StrongMax}).}
  Recall that for any measure space $\Omega$, the algebra $L_\infty(\Omega)$
  is isomorphic to $C(X)$ where $X$ is certain compact Hausdorff and
  disconnected topological space. In order to apply Theorem
  \ref{ultraProdIntertwining} we need to express an element
  $\omega \mapsto T_\omega$ inside $L_\infty(\Omega;\CB(L_p(\N)))$ as a
  $\CB(L_p(\N))$-valued function on $C(X)$. But, since
  $X \subset \Ball(L_\infty(\Omega)^\ast)$, we can safely evaluate
  $\phi \otimes \Id$, where $\phi \in X$, against $(T_\omega)_\omega$. By an
  application of Theorem \ref{ultraProdIntertwining} the diagram in
  Figure \ref{diagramaMaximal} commutes. 
  \begin{figure}[t]
    \centering
    \xymatrix{
      L_p(\M \rtimes_\theta G) \ar[ddr]^-{(S_\omega \rtimes \Id )_{t}}  \ar[rr]^-{(S_\omega \rtimes T_\zeta)_{(\omega,\zeta)}} \ar[ddddd]^-{j_p} & & L_p(\M \rtimes_\theta G; L_\infty  \mintensor L_\infty) \ar[ddddd]^-{j_p}\\
      \\
      & L_p(\M \rtimes_\theta G; L_\infty) \ar[uur]^{(\Id \rtimes T_\zeta)_{\zeta}} \ar[d]^-{j_p} &\\
      & \displaystyle{\prod_{n,\U}} L_p(\M \weaktensor \B(L_2 G); L_\infty) \ar[ddr]^-{(\Id \otimes T_\zeta)_\zeta} &\\
      \\
      \displaystyle{\prod_{n,\U}} L_p(\M \weaktensor \B(L_2 G)) \ar[uur]^-{(S_\omega^\alpha)^{\alpha, \U}_\omega} & & \displaystyle{\prod_{n,\U}} L_p(\M \weaktensor \B(L_2 G); L_\infty  \mintensor L_\infty)
    }
    \caption{
      Commutative diagram for the proof of Theorem
      \ref{CP.thm.StrongMax}.
    }
    \label{diagramaMaximal}
    \hrulefill
  \end{figure}
  The $j_p$ are the complete isometries of Theorem
  \ref{ultraProdIntertwining}. The isometries
  $j_p$ intertwine $(S_\omega \rtimes \Id )$ with the ultraproduct with
  respect to $\U$ in $\alpha$ of the maps
  \[
    S^\alpha_\omega = \Ad_{X_\alpha^{1/p}} \, (S_\omega \otimes \Id) \, \Ad_{Y_\alpha^{1/p}}
  \]
  and so $(S_\omega \rtimes \Id )_{\omega \in \Omega_2}$ is completely bounded
  (resp. completely positive) if the ultraproduct of such maps 
  is completely bounded (resp. completely positive). But, since each
  $S_\omega$ is c.p. and $\M$ is hyperfinite that follows by Lemma
  \ref{CP.lem.cpVectorValued}. Similarly,
  $(\Id \rtimes T_\zeta)_{\zeta \in \Omega_2}$ is completely bounded (resp.
  completely positive) if $(\Id \otimes M_\zeta)_{\zeta \in \Omega_2}$ is c.b.
  (resp. c.p.), where $M_\zeta$ is the Schur multiplier sharing its symbol
  with $T_\zeta$. By Theorem \ref{transSchurFou} $(\Id \otimes M_\zeta)_{\zeta}$
  is completely bounded. Now, applying Lemma \ref{CP.StrongTensor} gives
  that $((\Id \otimes M_\zeta) \, S^\alpha_\omega)_{(\omega,\zeta)}$
  is completely bounded and that finishes the proof.
\end{proof}

The Corollary \ref{CP.cor.DiagMaximal} follows from the Theorem above after
applying Lemma \ref{CP.lem.diagRestriction}.

\begin{lemma}
  \label{CP.lem.diagRestriction}
  Let $\A$ be an abelian $C^\ast$-algebra and $(\M,\tau)$ a hyperfinite
  von Neumann algebra with a n.s.f. trace $\tau$, then
  \begin{equation*}
    \big\| (\Id_\M \otimes m): L_p(\M;\A \mintensor \A) \to L_p(\M;\A) \big\|_\cb \leq 1,
  \end{equation*}
  where $m: \A \mintensor \A \to \A$ is given by $f \otimes g \mapsto f \, g$.
\end{lemma}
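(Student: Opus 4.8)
The plan is to recognise the multiplication map $m$ as a $*$-homomorphism and then to push the resulting complete contractivity through the vector-valued $L_p$-construction, using functoriality in the coefficient operator space. First I would invoke Gelfand duality to write $\A = C_0(X)$ for a locally compact Hausdorff space $X$. Since the minimal (injective) tensor product of commutative $C^\ast$-algebras is again commutative with spectrum the product space, we have $\A \mintensor \A = C_0(X \times X)$, and under this identification $m$ becomes restriction to the diagonal,
$m(F)(x) = F(x,x)$; equivalently $m = \delta^\ast$ is the pullback along the continuous diagonal embedding $\delta : X \to X \times X$, $\delta(x) = (x,x)$. In particular $m$ is a $*$-homomorphism (unital when $X$ is compact). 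As every $*$-homomorphism of $C^\ast$-algebras is completely positive and completely contractive, this already gives $\|m : \A \mintensor \A \to \A\|_\cb \leq 1$.

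It then remains to lift this bound to $\M$-valued $L_p$-spaces: I would check that any complete contraction $u : E \to F$ of operator spaces induces a complete contraction $\Id_\M \otimes u : L_p(\M;E) \to L_p(\M;F)$ when $\M$ is hyperfinite. This is the coefficient-space counterpart of Lemma \ref{CP.lem.cpVectorValued} and is part of Pisier's theory \cite{Pi1998}; for completeness I would recall the argument. Working first at the matricial level, the interpolation definition $S_p^n[E] = [S_1^n \osprojtensor E, M_n(\CC) \mintensor E]_{\frac{1}{p}}$ reduces matters to the two endpoints. Both $\osprojtensor$ and $\mintensor$ are functorial for completely bounded maps, so $u$ induces complete contractions $\Id \otimes u : S_1^n \osprojtensor E \to S_1^n \osprojtensor F$ and $\Id \otimes u : M_n(\CC) \mintensor E \to M_n(\CC) \mintensor F$. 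Complex interpolation then yields $\|\Id \otimes u : S_p^n[E] \to S_p^n[F]\|_\cb \leq \|u\|_\cb$ for every $n$, and since $\M$ is hyperfinite $L_p(\M;E)$ is exhausted by such finite-dimensional blocks along direct sums and inductive limits, so the estimate passes to $\Id_\M \otimes u$.

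Applying the second step to $u = m$ gives $\|\Id_\M \otimes m\|_\cb \leq \|m\|_\cb \leq 1$, as claimed. I do not expect a genuine obstacle here: the whole point of the lemma is the contrast, already flagged before Theorem \ref{ultraProdIntertwining}, between the minimal tensor product --- where diagonal restriction is a unital $*$-homomorphism and hence completely contractive --- and the von Neumann tensor product, where the diagonal is typically null and the restriction is not even well defined. The only item demanding mild care is the endpoint functoriality of $\osprojtensor$ and $\mintensor$ together with its stability under interpolation; everything else is formal.
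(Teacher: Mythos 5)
Your proof is correct, and it supplies exactly the argument the paper leaves implicit: Lemma \ref{CP.lem.diagRestriction} is stated there without proof, being regarded as routine, with its key point already flagged in the discussion preceding Theorem \ref{ultraProdIntertwining} (diagonal restriction is well defined on $\A \mintensor \A$ but not on the von Neumann tensor product). Your two steps --- identifying $m$ via Gelfand duality with the pullback along the diagonal $C_0(X) \mintensor C_0(X) \cong C_0(X \times X) \to C_0(X)$, hence a $\ast$-homomorphism and completely contractive, and then invoking the functoriality of $E \mapsto L_p(\M;E)$ for hyperfinite $\M$ from \cite{Pi1998} via interpolation between the $\osprojtensor$ and $\mintensor$ endpoints --- are precisely the intended reasoning, so there is nothing to add.
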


\begin{proof}{\bf \, (of Corollary \ref{CP.cor.DiagMaximal})}
  Notice that, if $\R_1 = (S_\omega \rtimes T_\zeta)_{(\omega,\zeta)}$ and
  $\R_2 = (S_\omega \rtimes T_\omega)_{\omega}$, we have that:
  \[
    \R_2 = (\Id_{\M \rtimes G} \otimes m) \, \R_1,
  \]
  and applying Lemma \ref{CP.lem.diagRestriction} together with Theorem
  \ref{CP.thm.StrongMax} gives the desired result.
\end{proof}

With that at hand we can pass to prove of the stability under crossed products
of the standard assumptions.

\begin{proof}{\bf \, (of Theorem \ref{CP.thm.Stability})}
  To prove that $X$ is doubling we just use that $X^2_H \rtimes \1$ and 
  $\1 \rtimes X^2_G$ commute when $X_H$ is $\theta$ invariant and therefore:
  \[
    \chi_{[0,r^2)}(X) \leq \chi_{[0,r^2)}(X_H \rtimes \1) \, \chi_{[0,r^2)}(\1 \rtimes X_G).
  \]
  Using that
  $\tau_K((x \rtimes \1) \, (\1 \rtimes y)) = \tau_H(x) \, \tau_G(y)$ gives
  that $\Phi_X(r) \leq \Phi_{X_H}(r) \Phi_{X_G}(r)$. Similarly it can be proved 
  that $\Phi_{X_H}(r) \Phi_{X_G}(r) \leq \Phi_X(2 \, r)$ and therefore $X$ is 
  doubling. The \ref{CP.S3.defStdAss.L2GB} property is proved similarly. For the
  \ref{CP.S3.defStdAss.CBHL} maximal inequalities we just use that
  \[
    \frac{\chi_{[0,r]}(X)}{\Phi_X(r)} \star u
    \lesssim_{(D_{\Phi_{X_H}}, D_{\Phi_{X_G}})}
     (\R_r^H \rtimes \R_r^G)(u),
  \]
  where
  \[
    \begin{array}{rc>{\displaystyle}lcrc>{\displaystyle}l}
      \R_r^H(u) & = & \frac{\chi_{[0,r]}(X_H)}{\Phi_{X_H}(r)} \star u & \mbox{ and } & \R_r^G(u) & = & \frac{\chi_{[0,r]}(X_G)}{\Phi_{X_G}(r)} \star u.
    \end{array}
  \]
  The maximal boundedness of $(\R_r^H \rtimes \R_r^G)_{r \geq 0}$ follows from
  Corollary \ref{CP.cor.DiagMaximal}.
\end{proof}

\textbf{Acknowledgement.} The author was informed, through a personal
communication, that some of the results here exposed were obtained, in
the context of certain amenable groups instead of amenable actions,
independently by Quanhua Xu. I am also thankful to {\'E}ric Ricard for
some discussions concerning the necessity of the amenability condition
for the action.

\bibliographystyle{alpha}
\bibliography{../bibliography/bibliography}

\end{document}